\date{\today}
\setlist[enumerate]{%
wide =0.5\parindent,
listparindent=0pt%
}%
\let \i\undefined
\let \div \undefined
\newcommand{\Id}{\text{\rm Id}}
\DeclareMathOperator{\curl}{curl}
\DeclareMathOperator{\div}{div}
\DeclareMathOperator{\supp}{supp}
\DeclareMathOperator{\diag}{diag}
\newcommand{\i}{\mathrm{i}}
\newcommand{\R}{\mathbb{R}}
\newcommand{\B}{\mathbb{B}}
\newcommand{\C}{\mathbb{C}}
\newcommand{\p}{\partial}
\newcommand{\n}{\nabla}
\renewcommand{\d}{{\rm d}}
\let\epsilon\varepsilon
\newcommand{\e}{\epsilon}
\renewcommand{\r}{\rho}
\newtheorem{lemma}{{Lemma}}
\newtheorem{theorem}{Theorem}
\newtheorem{proposition}{Proposition}
\theoremstyle{remark}
\newtheorem{remark}{Remark}
\numberwithin{equation}{section}
\def \u#1{\underline{#1}}
\let \o \undefined
\def \o#1{#1}
\newcommand{\be}[1]{\begin{equation}\label{#1}}
\newcommand{\ee}{\end{equation}}
\renewcommand{\r}[1]{\eqref{#1}}
\title{The DC Kerr Effect in Nonlinear Optics}
\author[N. Eptaminitakis] {Nikolas Eptaminitakis}
\address{Institut für Differentialgeometrie, Leibniz Universit\"at Hannover,
	Welfengarten 1, 30167 Hannover, Germany}
\author[P. Stefanov]{Plamen Stefanov}
\address{Department of Mathematics, Purdue University, West Lafayette, IN 47907}
\thanks{P.S. partly supported by  NSF  Grant DMS-2154489. N.E. acknowledges support from the Graduate Academy of Leibniz University Hannover}
\begin{document}

\begin{abstract}
We use weakly nonlinear geometric optics to study a model for the  DC Kerr effect (the Kerr electro-optic effect), in which a light beam propagating through a material with strong nonlinear optical properties can have its polarization rotated by applying a strong external electric field. This effect is used to build fast switches (Kerr cells). We prove existence of an exact solution of the nonlinear Maxwell system with a cubic Kerr nonlinearity, with the wavelength $h$ being a small parameter. We justify the effect within this model, and also solve the inverse problem of recovery of the nonlinear susceptibility $\o{\chi}^{(3)}$ from the change of the polarization.  
\end{abstract} 
\maketitle

\section{Introduction}
The goal of this work is to analyze a model for the DC Kerr effect in nonlinear optics (also known as the Kerr electro-optic effect),  and to solve the inverse problem of recovering the relevant nonlinearity  parameter. We use the so-called weakly nonlinear geometric optics with the wave length proportional to a small parameter $h>0$. 
The DC Kerr effect can be described as follows, see, e.g., \cite{new2011-non-linear, boyd2020nonlinear}. A polarized beam of light is sent through a medium for which the polarization $P$ exhibits non-negligible nonlinear dependence on the electric field $E$ which is of Kerr type, that is, the nonlinearity is  cubic, proportional to $|E|^2E$, and its quadratic part vanishes  due to cancellations. Such materials include liquids, gases, amorphous solids, and even some crystals.  The intensity of the light is not strong enough for it to be affected significantly by  nonlinear interaction. When a strong constant (or slowly varying) electric field $E_0$ is applied perpendicularly to the direction of propagation, the nonlinear  interaction becomes significant and it creates birefringence causing the polarization to rotate along the way. This effect is used to create very fast switches, called Kerr cells, see Section~\ref{sec_cell}, which have been used to measure the speed of light. It can also be used to measure the nonlinearity of the material, see \cite{al2014determination}, or the strong field intensity. 

To model this effect, we work with Maxwell's equations in $\R_t\times \R_x^3$ in the absence of free currents and charges (see \cite[Sec. 2.1]{boyd2020nonlinear})
\begin{subequations}
\begin{align}
		\p_t D-\curl H&= 0\label{eq1}\noeqref{eq1},\\
		\p_t B+\curl E&=0\label{eq2}\noeqref{eq2},\\
		\div B&=0\label{eq3}\noeqref{eq3},\\
		\div D&=0\label{eq4}\noeqref{eq4},
\intertext{with the constitutive equations}	
B=\mu_0 H,\qquad D=\e_0 E+P&, \qquad P=\e_0\chi^{(1)}E+\e_0\o{\chi}^{(3)}|E|^2E.\label{eq5}
\end{align}
\end{subequations}
As already mentioned, $E$ is the electric field and is $D$ the electric displacement field.
Moreover, $B$ and $H$ represent the magnetic flux density and magnetic field strength respectively,  whereas
 $\epsilon_0$ and $\mu_0$  are the electric conductivity and the magnetic permeability in free space respectively. 
Our assumption \eqref{eq5} on the polarization density $P$ indicates instant but nonlinear (third order) polarization.
We assume that the  linear susceptibility $\chi^{(1)}$ and the third order nonlinear susceptibility $\o{\chi}^{(3)}$ are real valued and compactly supported (we will  take $\chi^{(1)}=0$ later). 

In our first main result, Theorem~\ref{thm_main} below, we construct solutions depending on the small parameter $h>0$, with a suitable $h$-dependent scaling of the strong electric field and the beam, see \r{A1}. The  weakly geometric optics originated in the physics literature and was developed in the mathematical one in \cite{Metivier-Notes,Metivier-Joly-Rauch, Joly-Rauch_just, Donnat-Rauch_dispersive, Dumas_Nonlinear-Geom-Optics, JMR-95, Rauch-geometric-optics} and other works.  We derive the $h$-dependent scaling needed to explain the rotation of the polarization effect within this model, and then justify it. Then in Theorem~\ref{thm_inverse}, we show that one can recover the X-ray transform of $\chi^{(3)}$ along the light rays from the change of the polarization, and then $\chi^{(3)}$ itself.

There are a few related mathematical works  about inverse problems for nonlinear optics that we want to mention. A scalar stationary model has been studied in \cite{John_Kerr}. In  \cite{Assylbekov-Ting3}, a recovery of $\chi^{(3)}$ is proven with a stationary model but that model excludes formation of harmonics, which actually are a part of nonlinear optics. The authors study a fixed frequency with the higher linearization method (see next paragraph). A stationary model with quadratic nonlinearity is considered in \cite{Assylbekov-Ting2}, approximating the nonlinear Maxwell system to concentrate on second order harmonics only.  
The inverse problem in nonlinear acoustics has been the focus of \cite{acosta2021nonlinear, Uhlmann-Zhang-acoustics, Kaltenbacher_2021, S_N-Westervelt}. That model is  close to a scalar version of nonlinear optics with a quadratic nonlinearity $\chi^{(2)}$. 

Most of the inverse problems results for nonlinear hyperbolic PDEs are based on the higher order linearization  method  pioneered in \cite{KLU-18} and \cite{LassasUW_2016}. The idea of the method is to consider small solutions with one or several small parameters and take the Taylor expansion of the solution with respect to them. The information about the nonlinearities is contained in those  higher order terms,   which solve the linearized PDE with source terms.  Other works in this direction are 
\cite{Hintz1,Hintz2,LassasUW_2016, LUW1, lassas2020uniqueness, Hintz-U-19, uhlmann-zhang-2021inverse, OSSU-principal}. On the other hand, inverse problems for semilinear wave type PDEs  were studied using nonlinear geometric optics  by the second author and S\'a Barreto in \cite{S-Antonio-nonlinear, S-Antonio-nonlinear2}, and for the quasilinear Westervelt equation by the present authors in \cite{S_N-Westervelt}, in regimes where the nonlinearity affects the principal term in \cite{S-Antonio-nonlinear, S_N-Westervelt} and the sub-principal one in \cite{S-Antonio-nonlinear2}, because of the nature of the nonlinear problem there. In particular, we justify in \cite{S-Antonio-nonlinear, S_N-Westervelt}  nonlinear effects observed in physics. They happen for highly oscillatory solutions (in some frequency band), and the effect is in the leading order term. One of the difficulties with this approach is that a priori solvability of the corresponding PDE is not guaranteed by the ``small initial conditions'' theorems. The solutions of interest are not small in the required Sobolev norms because of the high oscillations. On the other hand, they are physical, and they should exist if the model is good.

\section{Main results} \label{sec:main_results}
We work in a large ball $B(0,R_0) \subset\R^3$, $R_0\gg1$. Assume  $\chi^{(1)}=0$ (constant speed), and $\chi^{(3)}\in C_0^\infty (\R^3)$, with $\supp \chi^{(3)}\subset B(0,R)$, $0<R<R_0$. 
Assume also $\e_0\mu_0=1$;
 this amounts to rescaling the time variable $t$ to $\tilde t = (\e_0\mu_0)^{-1/2}t$, so that the speed of light $c=(\e_0\mu_0)^{-1/2}$ becomes one. 
Under those assumptions and as explained in Section \ref{sec:reduction}, we can convert \eqref{eq1}--\eqref{eq5} to the second order $3\times3$ hyperbolic system %
\begin{equation}\label{eq:second_order_metric3}
	\p_t^2 E- \Delta E + \nabla \div E=-\chi^{(3)}\p_t^2 (|E|^2E), 
\end{equation}
under the divergence free condition 
\begin{equation}\label{div0}
  \div \big( E  + \o\chi^{(3)}|E|^2E \big)=0.
\end{equation}
We prove that this is equivalent to the Maxwell system in Proposition~\ref{pr1} (with suitable initial conditions). 
In \eqref{eq:second_order_metric3} and throughout, $\nabla$, $\div$, and $\Delta$ respectively denote the gradient, divergence, and component-wise negative Laplacian in the space variables only. %

For a fixed $\omega \in S^2$, we  are interested in a solution of the form
\begin{equation}   \label{A1}
	E = h^{1/2}\underline{E}(x,h)  + h^{3/2} U(t,x,\phi/h,h),\qquad \phi\coloneqq -t+ x\cdot\omega,
\end{equation}with 
\begin{equation}\label{U:exp}
	U (t,x,\theta,h) \sim U_0(t,x,\theta) + hU_1(t,x,\theta)+\dots,
\end{equation}
where all terms are $2\pi$-periodic in $\theta$.
The choice of the scaling is explained in Section~\ref{sec_gen}; in short, it is the one guaranteeing the effect we want to model. We think of 
$h^{1/2}\underline{E}(x,h)$ as the ``strong'' electric field. It is indeed strong relative to the ``beam'' $h^{3/2}U$. The $\sim h^{1/2}$ magnitude in absolute terms is not an indicator of its strength, since we can rescale $E$, $U$, and $\chi^{(3)}$ by suitable powers of $h$, and change the power of $h$ multiplying $\underline{E}(x,h)$, see Section~\ref{sec_scale}. 
The stationary field $h^{1/2}\underline{E}$ should have an expansion 
\begin{equation}   \label{E0}
	h^{1/2}\underline{E}(x,h) \sim h^{1/2}\left(  \u{E}_0(x)+ h \u{E}_1(x) +\dots\right),
\end{equation}
and satisfy  \r{div0}.
We take $\underline{E}_0=\textrm{const.}$ in Theorem~\ref{thm_main}\ref{item:c} below but keep it variable in part \ref{item:a}.

We can impose a suitable initial condition that will generate a single-phase solution of the form \eqref{A1}.
However it turns out to be more convenient  to instead impose initial conditions of the form 
\begin{equation}   \label{IC1}
	E|_{t=0} =  E_{(0)}\coloneqq   h^{1/2}\underline{E}(x,h)+ 2h^{3/2}   U_{\rm init}(x) \cos \frac{x\cdot\omega}{h}, \quad  %
	\partial_t E|_{t=0}=0,
\end{equation}
where   $U_{\rm init}\in C_0^{\infty}(\R^3;\, \R^3)$ is supported away from $B(0,R)$ and satisfies \begin{equation}   \label{IC0}
\div  U_{\rm init}(x)=0,\quad   \omega\cdot   U_{\rm init}(x)=0,
\end{equation} 
which, as we will see, guarantee the divergence free conditions \eqref{eq3}, \eqref{eq4}.
The initial condition \eqref{IC1} will actually generate \emph{two} waves at the level of geometric optics, propagating in opposite directions: one towards the region of interest $B(0,R)$  with phase $\phi_{\rm in}=\phi=-t+x\cdot \omega$, and one away from it, with phase $\phi_{\rm out}=t+x\cdot \omega$ (see Figure~\ref{fig:the_setup}).
This is explained in more detail in Section \ref{sub:initialization_of_the_problem_the_linear_solution}. 
Then \eqref{U:exp} holds in $B(0,R)$ for $t\geq 0$. When $\underline{E}_0=\textrm{const.}$, 
we define the phase retardation
\begin{equation}   \label{tau}
	\tau(x) =  \frac12|\u{E}_0|^2 \int_{-\infty}^0 \chi^{(3)}(x+\sigma \omega )\,\d\sigma.
\end{equation}

\begin{figure}[ht]
	\includegraphics[scale=.6]{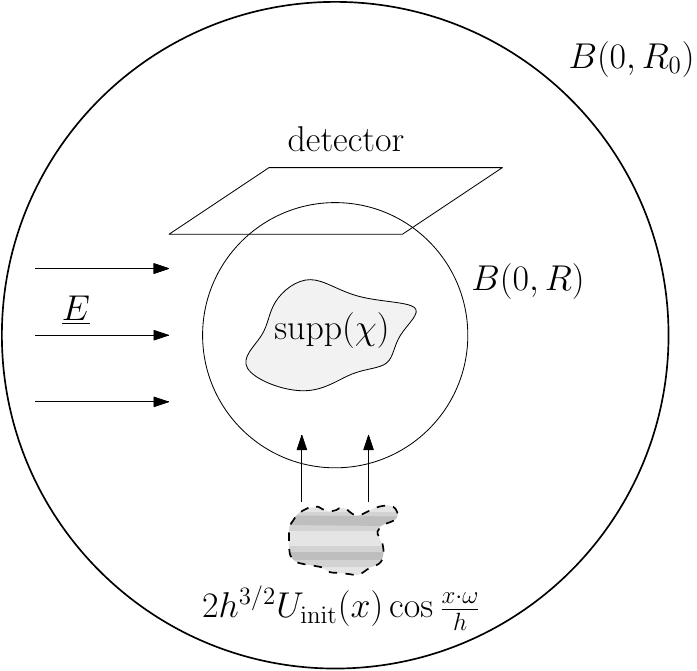}
	\caption{\small The setup. The backward (downward) propagating free solution is not shown.}
	\label{fig:the_setup}
\end{figure}

In the next theorem we formulate a lighter version of our main results, with remarks about the general ones. 
We assume  without loss of generality that $\omega=e_1=(1,0,0)$; then  $U_{{\rm init},1}=0$. We denote by $\Delta_D$ the Dirichlet Laplacian in $B(0,R_0)$; the  spaces $\mathcal{B}_\sigma^{m}$ are defined in Section \ref{sec:exact_solutions}.

\begin{theorem}  \label{thm_main} Under the assumptions above, we have the following. 
\begin{enumerate}[label = (\alph*)]
  \item\label{item:a} \emph{[Existence of a strong electric field.]}
Let $f\in H^{s-1/2}(\partial B(0,R_0))$, where $s>5/2$, and let ${u_f}\in H^{s}(B(0,R_0))$ be its harmonic extension  to $B(0,R_0).$ 
Also fix $\epsilon>0$.
There exists $h_0>0$ such that for $0< h\leq h_0$ there exists a unique stationary solution $h^{1/2}\underline{E}(\cdot,h)\in H^{s-1}(B(0,R_0))$ of \eqref{eq:second_order_metric3} 
and \eqref{div0} 
 satisfying $\underline{E}=\nabla {u_f}$ on $\partial B(0,R_0)$, $\curl E=0$ in $B(0,R_0)$, 
 and $\|E-\nabla {u_f}\|_{H^{s-1}(M)}\leq \epsilon$. %
It admits an asymptotic expansion in $h$ of the form
\begin{equation}
 	h^{1/2}\underline{E}=h^{1/2}\nabla{u_f}-h^{3/2} \nabla \Delta_D^{-1}\div(\chi |\nabla {u_f}|^2\nabla {u_f})+O_{H^{s-1}}(h^{5/2}).
 \end{equation}

\item\label{item:b} \emph{[The beam with no strong electric field.]} Let $m\geq 3$ be an integer and fix $T>0$.
There exists $\sigma>0$ and $h_0>0$ such that for  $0<h\leq h_0$ there exists a solution of \eqref{eq:second_order_metric3} defined for $t\in [0,T]$, subject to initial conditions \eqref{IC1} with $\underline E=0$, and satisfying in $B(0,R)$%
\begin{equation}\label{eq:exp_no_nonlinearity}
	E_{\rm in} =  
	    h^{\frac32} \Big(0, U_{{\rm init},2}(x-t e_1) \cos \frac{-t+x^1}{h}  ,  U_{{\rm init},3} (x-te_1)  \cos \frac{-t+x^1}{h}  \Big)+O(h^\frac52),
\end{equation}
where the error is  term is the restriction to $[0,T]\times B(0,R)$ of an element in $h^{5/2}\mathcal{B}_\sigma^{m}$.

\item\label{item:c} \emph{[The beam in presence of a strong electric field.] }
Let $m\geq 3$ be an integer and fix $T>0$. Construct $\underline{E}$ as in \ref{item:a} with $s\geq m+1$ and $f=|E_0|x_3$, where $|\underline{E}_0|=\textrm{const.}$, so that  $\underline{E}= |\underline{E}_0| e_3+O(h)$. 
There exists $\sigma>0$ and $h_0>0$ such that for  $0<h\leq h_0$  there exists a solution of \eqref{eq:second_order_metric3} defined for $t\in [0,T]$ with initial conditions \eqref{IC1}, having an expansion of the form \eqref{U:exp} in $B(0,R)$ with its first terms being  
\begin{equation}
	\begin{aligned}\label{exp1}
		E_{\rm in} &=h^\frac12 |\u{E}_0|e_3 \\
		&\quad +  h^{\frac32} \Big(0, U_{{\rm init},2}(x-t e_1) \cos\Big(\frac{-t+x^1}{h}+\tau(x)\Big),  U_{{\rm init},3} (x-te_1)  \cos\Big( \frac{-t+x^1}{h}+3\tau(x)\Big) \Big)\\
		 & \qquad + O(h^{\frac52}), 
	\end{aligned} 	
\end{equation} 
where the error is  term is the restriction to $[0,T]\times B(0,R)$ of an element in $h^{5/2}\mathcal{B}_\sigma^{m}$.
In \eqref{exp1}, $\tau$ is as in \eqref{tau}.
\end{enumerate}
In parts \ref{item:b} and \ref{item:c}, the solution is unique among functions in a sufficiently small neighborhood of an asymptotic solution with finitely many terms, see Proposition \ref{prop_Gues} for a precise statement. 
\end{theorem}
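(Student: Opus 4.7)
Since $\curl \underline E = 0$, write $\underline E = \nabla u$. Then $(-\Delta + \nabla \div)\nabla u \equiv 0$ identically, so the only remaining constraint is \eqref{div0}, which after dividing by $h^{1/2}$ reduces to the scalar quasilinear Dirichlet problem
\begin{equation*}
\Delta u + h\,\div\!\bigl(\chi^{(3)}|\nabla u|^2 \nabla u\bigr) = 0 \text{ in } B(0,R_0), \qquad u|_{\partial B(0,R_0)} = u_f.
\end{equation*}
Setting $u = u_f + v$, this is equivalent to the fixed-point equation
\begin{equation*}
v = -h\,\Delta_D^{-1}\div\!\bigl(\chi^{(3)}|\nabla(u_f+v)|^2 \nabla(u_f+v)\bigr)
\end{equation*}
on $H^s_0(B(0,R_0))$. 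Since $H^{s-1}$ is a Banach algebra for $s>5/2$, the right-hand side is a contraction on a small ball for $h\leq h_0$. The Banach fixed-point theorem yields existence, uniqueness, and from the first iterate the claimed asymptotic expansion.

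\textbf{Parts (b) and (c): WKB construction and polarization rotation.} Insert \eqref{A1}--\eqref{U:exp} into \eqref{eq:second_order_metric3} and collect powers of $h$. The principal symbol of $\partial_t^2 - \Delta + \nabla\div$ at the characteristic covariable $(\mp 1,\omega)$ equals $\omega\omega^T$, with kernel $\omega^\perp$. The $h^{-1/2}$ order enforces the transversality $\omega\cdot U_0 = 0$. Expanding $|E|^2 E$ as a polynomial in $h^{1/2}$, its order-$h^{5/2}$ coefficient is $|\underline E|^2 U + 2(\underline E\cdot U)\underline E$; combined with the $h^{-2}$ factor supplied by $\partial_t^2$ acting on the fast phase, this produces an $O(h^{1/2})$ forcing on the right that balances the $h^{1/2}$ term on the left. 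Projecting onto $\omega^\perp$ eliminates $U_1$ and yields a transport equation for $U_0$, linear in case (b) and nonlinear in case (c). In the setup of (c), with $\omega=e_1$ and $\underline E_0=|\underline E_0|e_3$, the map $V\mapsto |\underline E_0|^2 V + 2(\underline E_0\cdot V)\underline E_0$ on $\omega^\perp$ is diagonal with eigenvalues $|\underline E_0|^2$ along $e_2$ and $3|\underline E_0|^2$ along $e_3$. Using the ansatz $U_{0,k}(t,x,\theta) = a_k(t,x)\cos(\theta + \psi_k(t,x))$ for $k=2,3$ and matching $\cos/\sin$ coefficients yields
\begin{equation*}
(\partial_t + \partial_{x^1})a_k = 0, \qquad (\partial_t + \partial_{x^1})\psi_k = \tfrac{c_k}{2}\chi^{(3)}|\underline E_0|^2, \qquad c_2 = 1,\ c_3 = 3,
\end{equation*}
with $a_k|_{t=0} = U_{\rm init,k}$ and $\psi_k|_{t=0} = 0$. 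Since $U_{\rm init}$ is supported away from $B(0,R)$, integrating along the characteristics gives $a_k = U_{\rm init,k}(x-te_1)$ and $\psi_k(t,x) = c_k\tau(x)$ inside $B(0,R)$ for positive $t$ (after the beam has fully entered the support of $\chi^{(3)}$), with $\tau$ as in \eqref{tau}. Combined with $\omega\cdot U_0 = 0$, this yields the leading terms of \eqref{exp1}; part (b) is the degenerate case $\underline E_0 = 0$, $\tau\equiv 0$.

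\textbf{From approximate to exact solution.} Higher correctors $U_j$ and the companion outgoing profile tied to $\phi_{\rm out}$ (needed to accommodate $\partial_t E|_{t=0}=0$) are built by solving a cascade of linear transport equations, using the next $U_{j+1}$ to absorb the range of $\omega\omega^T\partial_\theta^2$ in each residue. Truncating at sufficiently high order produces an approximate solution with residue $O(h^N)$ in the relevant norm, for any prescribed $N$. The weakly nonlinear existence statement Proposition~\ref{prop_Gues} then upgrades this to an exact solution of \eqref{eq:second_order_metric3} in the Gevrey-type class $\mathcal B_\sigma^m$, with error in $h^{5/2}\mathcal B_\sigma^m$ and with the stated uniqueness. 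The main technical obstacle is verifying the hypotheses of Proposition~\ref{prop_Gues}: one must handle the cubic harmonic interactions $e^{\pm i\theta}\to e^{\pm 3i\theta}$ generated by the Kerr term at higher orders, confirm that the resonant phases remain confined to $\pm\phi_{\rm in},\pm\phi_{\rm out}$, and check that the divergence-free condition \eqref{div0} is preserved by the construction through the transversalities $\omega\cdot U_j=0$ and the curl-free structure of the stationary background.
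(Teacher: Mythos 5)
Your proposal follows essentially the same route as the paper: part (a) is the paper's reduction to the quasilinear elliptic Dirichlet problem solved by a Banach fixed point in $H^s$ via the algebra property of $H^{s-1}$ (Lemma~\ref{lm:contraction}, Theorem~\ref{thm:exp_psi}), and parts (b)--(c) reproduce its weakly nonlinear geometric optics construction -- polarization condition at order $h^{-1/2}$, transport operator diagonalized by $|\u{E}_0|^2\diag(1,1,3)$ yielding the $\tau$ and $3\tau$ phase shifts, higher correctors, then Gu\`es's theorem. Two minor imprecisions that do not change the argument: the $h^{-1/2}$ order only forces $\partial_\theta(\omega\cdot U_0)=0$, so the $\theta$-independent mode $C_0$ must be shown to vanish separately (the paper's Lemma~\ref{lm:C0}); and the leading transport equation is linear in $U_0$ in case (c) as well (also, $\mathcal{B}_\sigma^m$ is an $h$-weighted Sobolev class, not a Gevrey one).
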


\noindent Note that all statements above refer only to behavior of solutions in the region of interest $B (0,R)$, whereas in Lemma \ref{lm:stationary} and Proposition~\ref{prop_Gues} later we make precise statements about their global behavior.  

Part \ref{item:a} states the existence of a stationary electric field, which is a differential of a harmonic function up to  lower order terms. A stationary solution of \eqref{eq:second_order_metric3}--\eqref{div0}  as in the theorem that is also curl-free yields, together with a stationary $H$, a stationary solution to \eqref{eq1}--\r{eq5}.  
To get a constant $\underline{E}_0$ as in part \ref{item:c}, we chose $f=|\u E_0|x_3$ with $|\u E_0|$ constant. 
In fact, a constant $\underline E$ does not satisfy the divergence free condition for $D$, unless $\chi^{(3)}$ is constant. The proof  reduces to solving a nonlinear second order elliptic PDE, see \eqref{eq:el} in Section~\ref{eq:choosing_the_strong_electric_field}. %
Part \ref{item:b}  says that under the chosen prefactor of $h^{3/2}$, \textit{and no strong electric field}, the principal term of the beam would the same as if the system were linear, i.e.,  $\chi^{(3)}=0$.
In part \ref{item:c}, the strong field is on, and the beam is interacting with it. This creates phase shifts, and a birefringent effect rotating the polarization, which we explain in more detail in Sections~\ref{sub:the_asymptotic_solution_the_leading_term}, and \ref{sec_cell}.

The proof of existence of (exact) solutions close to the  asymptotic ones constructed in \ref{item:c} is non-trivial, since we lack a priori existence, uniqueness, and well-posedness results for the nonlinear Maxwell system. We use a result of Guès (\cite{Gues93}) to prove existence of such solutions,
at least in our region of interest, thus justifying the formal asymptotic expansions. 
Rigorous justifications of asymptotic expansions for solutions of quasilinear PDE have also been provided by other authors under a variety of assumptions, see e.g., \cite{Kalyakin1989}, \cite{Yoshikawa1993},	  \cite{Jol-Met-Rau-just-Duke}, and the references there.
Some of the above works also study multi-phase asymptotic solutions, in which case  waves corresponding to different phases can interact and form new ones that propagate in new directions;
even though our asymptotic solution exhibits two phases, one does not interact with the nonlinearity at the level of geometric optics, and the construction of an asymptotic solution is similar to a single-phase one.

Our analysis confirms, at least in principle, formation of harmonics, typical for nonlinear problems of wave type. 
In the principal term for the beam, of order $O(h^{3/2})$, we observe only the modes $\pm1 $ (this is due to our choice of initial data).
The next term, of order $O(h^{5/2})$, can have the harmonics $k=-2,-1,1,2$, see Lemma \ref{lm:U1}. They come from interactions of two of the harmonics in the previous term with one instance of the strong electric field, via the cubic nonlinearity. 
At order $O(h^{7/2})$ we obtain harmonics corresponding to $\pm 1$, $\pm 2$, $\pm 3$, as well as a zero harmonic that solves \eqref{eq:first_zero_harm}.

As an immediate corollary of Theorem~\ref{thm_main}, we can recover the X-ray transform of $\chi^{(3)}$ along the lines through its support. We vary $\omega$ now, and assume that $U_{\rm init}$ varies as well, so that the rays through its support in the direction $\omega$ cover $B(0,R)$. This is equivalent to illuminating $B(0,R)$ from all directions with a beam wide enough to cover it. For a fixed $\omega$, we may have several beams covering $B(0,R)$ as well. We place a ``detector'' at the plane $\pi_{R,\omega} := \{x|\;x\cdot\omega=R\}$, see Figure~\ref{fig:the_setup}, and we assume $R_0\gg1$ so that the ball $B(0,R)$, projected to $\pi_{R,\omega}$ fits in $B(0,R_0/2)$. 

\begin{theorem}   \label{thm_inverse}
Under the assumptions of Theorem~\ref{thm_main}\ref{item:c}, for every $\omega\in S^2$, let  $U_{\rm init}\not\equiv0$ be such that all rays in the direction of $\omega$ issued from $\supp U_{{\rm init},2} \cap \supp U_{{\rm init},3}$ cover $B(0,R)$ and eventually exit it.   Then $E_\textrm{\rm in}(t,x)$ known for $x\in \pi_{R,\omega}$ and for $t\in [0,2R_0]$, and all $0<h\ll1$,  recovers the X-ray transform of $\chi^{(3)}$ in the direction $\omega$ uniquely. Varying $\omega$,  $\chi^{(3)}$ can be recovered uniquely as well. 
\end{theorem}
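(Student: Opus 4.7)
The plan is to extract the phase retardation $\tau(x)$ on $\pi_{R,\omega}$ from the measurement by a non-stationary phase argument, identify it with the X-ray transform of $\chi^{(3)}$, and invert. Taking $\omega=e_1$ without loss of generality (so $x^1=R$ on $\pi_{R,\omega}$), Theorem~\ref{thm_main}\ref{item:c} together with Sobolev embedding of $\calB_\sigma^m$ into $L^\infty$ for $m$ large enough gives
\begin{equation}
E_{{\rm in},2}(t,x)=h^{3/2}U_{{\rm init},2}(x-te_1)\cos\!\Big(\tfrac{-t+R}{h}+\tau(x)\Big)+r_h(t,x),\qquad \|r_h\|_{L^\infty}\leq Ch^{5/2}.
\end{equation}
For a cutoff $\psi\in C_c^\infty((0,2R_0))$ set
\begin{equation}
I_h(x):=h^{-3/2}\int_0^{2R_0} E_{{\rm in},2}(t,x)\psi(t)\, e^{i(t-R)/h}\,dt.
\end{equation}
Expanding the cosine into complex exponentials splits the integrand into a non-oscillatory term proportional to $\tfrac12 e^{i\tau(x)} U_{{\rm init},2}(x-te_1)\psi(t)$ and a rapidly oscillating term $\tfrac12 e^{2i(t-R)/h-i\tau(x)} U_{{\rm init},2}(x-te_1)\psi(t)$; the latter is $O(h^N)$ for every $N$ by non-stationary phase, and $r_h$ contributes $O(h)$ after division by $h^{3/2}$. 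Hence
\begin{equation}
\lim_{h\to 0} I_h(x)=\frac{e^{i\tau(x)}}{2}\int_0^{2R_0} U_{{\rm init},2}(x-te_1)\psi(t)\,dt.
\end{equation}

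For every $x\in\pi_{R,\omega}$ whose backward ray intersects $\supp\chi^{(3)}$, the hypothesis that the rays issued from $\supp U_{{\rm init},2}\cap\supp U_{{\rm init},3}$ cover $B(0,R)$ produces some $t_\ast>0$ with $x-t_\ast e_1\in\supp U_{{\rm init},2}$; picking $\psi\geq 0$ supported near $t_\ast$ makes the right-hand side of the limit nonzero and recovers $e^{i\tau(x)}$. For $x$ whose backward ray misses $\supp\chi^{(3)}$, $\tau(x)=0$ by \eqref{tau}. Since $\tau$ is continuous (by smoothness of $\chi^{(3)}$) and compactly supported in $\pi_{R,\omega}$, the measured map $x\mapsto e^{i\tau(x)}$ determines $\tau(x)\in\R$ uniquely by continuation from the trivial region where $\tau=0$.

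At any $x\in\pi_{R,\omega}$, since $\supp\chi^{(3)}\subset B(0,R)$ and $x\cdot\omega=R$, the integrand in \eqref{tau} vanishes for $\sigma>0$, so $\tau(x)=\tfrac12|\underline{E}_0|^2\, X\chi^{(3)}(x,\omega)$, where $X$ denotes the X-ray transform. This recovers $X\chi^{(3)}(\cdot,\omega)$ on all of $\pi_{R,\omega}$. Varying $\omega$ over $S^2$ yields $X\chi^{(3)}$ along every line in $\R^3$, and classical injectivity of the X-ray transform on compactly supported smooth functions recovers $\chi^{(3)}$.

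The main technical point is the uniform $L^\infty$ bound on the remainder in Theorem~\ref{thm_main}\ref{item:c}, which amounts to choosing $m$ in the statement large enough that $\calB_\sigma^m\hookrightarrow L^\infty_{t,x}$ on the relevant compact set; once that is in place, the non-stationary phase step and the standard X-ray inversion pose no difficulty.
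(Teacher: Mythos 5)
Your proposal is correct and follows essentially the same route as the paper: test $E_{{\rm in},2}$ on the detector plane against an oscillatory weight in $t$ to isolate the non-oscillatory term carrying $e^{i\tau}$ (the paper uses $\psi(t)\cos(\phi/h)$ and $\psi(t)\sin(\phi/h)$ separately, which is equivalent to your single complex exponential), resolve the $2\pi k$ ambiguity via continuity and compact support of $\tau$ (the paper phrases this as $\tau_1-\tau_2=2\pi k(x')$ forcing $k\equiv 0$, you phrase it as continuation from the region where $\tau=0$ — the same argument), and then invert the X-ray transform. The only points handled slightly differently are cosmetic.
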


\noindent
In fact, the proof shows that knowing $E_\textrm{\rm in}(t,x)$ for $(t,x)$ as stated and  for $0<h\ll1$, we can recover the X-ray transform of $\chi^{(3)}$ up to $O(h)$. 

The article is organized as follows. In Section~\ref{sec:reduction}, we reduce the Maxwell system \r{eq1}--\r{eq5} to the wave type of equation \r{IC_E1} with appropriate divergence free conditions. We build the weakly geometric optics solution in Section~\ref{:the_weakly_geometric_optics}. In particular, in Section~\ref{sub:the_asymptotic_solution_the_leading_term} we demonstrate the rotation of the polarization effect presented in the leading term of the oscillatory expansion. In Section~\ref{sec_cell}, we show how Kerr cells fit in our analysis.     The strong electric field $\underline E$ is constructed in Section~\ref{eq:choosing_the_strong_electric_field} by solving a nonlinear elliptic PDE of double-phase type.  Some technical proofs showing the derivation of the profile equations can be found in Section~\ref{sec:the_profile_equations_proofs_of_lemmas_ref_lm_eikonal_transport}. In Section~\ref{sec:exact_solutions}, we show that there exist exact solutions close to the asymptotic ones, using a result by Guès.
Finally,  Theorem~\ref{thm_inverse} is proved in Section~\ref{sec_inverse}.

\textbf{Acknowledgments.} We  thank Arshak Petrosyan for pointing out to us references  \cite{Lad-Ural} and \cite{BaroniColomboMingione}, and for the discussion  on nonlinear elliptic PDEs. 

\section{Reduction to a 2nd order hyperbolic system}\label{sec:reduction}
In this section we show a  correspondence between solutions to \eqref{eq1}--\eqref{eq5} and \eqref{eq:second_order_metric3}--\eqref{div0}.
Taking divergence of \eqref{eq1} and \eqref{eq2}, we see that $\div B$ and $\div D$ remain constant in time. Therefore, equations \eqref{eq3}, \eqref{eq4} can be considered as divergence free requirements on the initial conditions that remain preserved for all $t$. 
We can eliminate $B$ and $D$ to obtain 
\begin{subequations}
\begin{align}
		\p_t (\e_0 E+P)-\curl H&=0, \label{eq1a}\noeqref{eq1a}\\
		\p_t \mu_0 H+\curl E&=0, \label{eq2a} \\
		\div \mu_0 H&=0,\label{eq3a}\noeqref{eq3a}\\ \div  (\e_0 E+P )&=0, \label{eq4a}\noeqref{eq4a}
\end{align}
\end{subequations}
where we view $P=P(E)$ as given by \eqref{eq5}. 
Equation \r{eq4a} has the form
\be{div}
\div D=0\quad   \Longleftrightarrow\quad   \div \Big( \big(1+ \chi^{(1)}\big)E  + \o\chi^{(3)}|E|^2E \Big)=0,
\ee 
and assume for now $1+ \chi^{(1)}>0$, although eventually we take $\chi^{(1)}=0$. 
We impose initial conditions  
\begin{equation}   \label{IC_EH}
(E,H)|_{t=t_0} = (E_{(0)},H_{(0)}),
\end{equation}
which cannot be arbitrary; the 
corresponding initial conditions $(D_{(0)},B_{(0)})$ for $(D,B)$ must be divergence free. This means that
\begin{equation}  \label{IC_E1}
	E_{(0)}+ \chi^{(1)}E_{(0)}+ \chi^{(3)}|E_{(0)}|^2E_{(0)}      = \e_0^{-1} D_{(0)}
\end{equation}
with $\div D_{(0)}=0$, as in \eqref{div}, and $\div  H_{(0)}=0$. Given divergence free $(D_{(0)},B_{(0)})$, we can easily solve for $ H_{(0)}=\mu_0^{-1} B_{(0)}$, and \r{IC_E1} can be solved for $E_{(0)}$ when $\chi^{(3)} |E_{(0)}|^2$ is small enough, picking up the solution close to $ H_{(0)}$.  This would be the case below since $E\sim h^{1/2}$.

We can transform \eqref{eq1a}--\eqref{eq2a} into a second order system for the electric field, of the form
\begin{equation}\label{eq:wave1}
	\e_0(1+\chi^{(1)})\p_t^2 E+\mu_0^{-1}\curl \curl E=-\e_0\chi^{(3)}\p_t^2 (|E|^2E).
\end{equation}
We need initial conditions
\begin{equation}   \label{IC_wave}
	E|_{t=t_0}= E_{(0)}, \quad \p_t E|_{t=t_0} =  E_{(0)}',
\end{equation}
and we use a similar notation $ D_{(0)}':= \p_t D|_{t=t_0}  $ below. 

Every solution $(E,H)$ of \eqref{eq1a}--\eqref{eq4a} with initial conditions \eqref{IC_EH} satisfying the divergence free conditions above  provides a solution $E$ of \eqref{eq:wave1} with initial conditions \eqref{IC_wave}. The second initial condition  $E_{(0)}'$ in \r{IC_wave} can be found from 
\begin{equation}
D_{(0)}'=	\e_0 \p_t   \big( E+ \chi^{(1)}E+ \chi^{(3)}|E|^2E   \big)|_{t=t_0}  = \curl H_{(0)}.  \label{IC_E2}
\end{equation} 
Equation \r{IC_E2} can be solved for $E_{(0)}'$ under the same conditions on $\chi^{(1)}$, $\chi^{(3)}$ and $E_{(0)}$. 
Then the so found $(E_{(0)}, \partial_t E_{(0)})$ yield the divergence free conditions $\div  D_{(0)}=0$ (by assumption), and  $\div  D'_{(0)}=0$ (by \eqref{IC_E2}). 

Conversely, assume we are given a solution $E$ to \eqref{eq:wave1} with initial conditions \eqref{IC_wave} for which  $\div D_{(0)}=\div D_{(0)}'=0$. 
We show first how to choose a divergence free $H_{(0)}$ solving \eqref{IC_E2}. 
We are looking for  $H_{(0)}$ satisfying  
\begin{equation}   \label{EH0}
\div H_{(0)}=0, \quad \curl H_{(0)} = D_{(0)}'.
\end{equation}
with $ D_{(0)}'$  given by \r{IC_E2}. The second equation is solvable for $H_{(0)}$ in every ball by the Poincar\'e lemma for two-forms. Such a solution is determined up to $\nabla\phi$ for some function $\phi$. Having one such solution $\tilde H_{(0)}$, to satisfy the first equation we need to solve  $\Delta\phi = -\div \tilde H_{(0)}$, which can be done locally. In other words, \r{EH0} can be solved in every ball. Its solution is determined up to $\nabla\psi$ with $\Delta\psi=0$.  On the other hand, adding such a term to $H$ (time-independent) in \eqref{eq1a}--\eqref{eq4a} keeps those equations satisfied, and adds it to the second initial condition in \eqref{IC_EH}. A more general view is that any such  $(0,H_{(0)})$ is a stationary solution of \eqref{eq1a}--\eqref{eq4a} with the same initial conditions, and can be added linearly to any other solution.

Having fixed   $H_{(0)}$, we set 
\begin{equation}   \label{HH0}
 H(\cdot)=H_{(0)}(\cdot)- \mu_0^{-1}\int_{0}^{t}\curl E(\tau,\cdot)\, \d \tau
\end{equation}
which guarantees that \eqref{eq2a} is satisfied. 
 Differentiating \r{eq1a} with respect to $t$, we see that its time derivative vanishes as a consequence of \r{eq:wave1}. Then \r{eq1a} holds up to a constant in $t$ but that constant vanishes because of \r{IC_E2}. This establishes the equivalency of \r{eq1a}--\r{eq4a} and \r{eq:wave1}, including the corresponding initial conditions.

\begin{proposition}  \label{pr1} 
	Let $(E,H)$ be a solution of \eqref{eq1a}--\eqref{eq5} with initial conditions \r{IC_EH} for which $D_{(0)}$, $H_{(0)}$ are divergence free. Then $E$ solves \r{eq:wave1} with initial conditions \r{IC_wave}, where $E_{(0)}'$ can be determined by \r{IC_E2}. 
	
	On the other hand, given $E_{(0)}$, $E_{(0)}'$ so that the left-hand sides of \r{IC_E1}, \r{IC_E2} are divergence free, determine $D_{(0)}'$ by \r{IC_E2}. Let $H_{(0)}$ be a solution of \r{EH0} determined up to the differential of some harmonic function $\phi$. 
	Then, $(E,H)$, with $H$ determined by \eqref{HH0} 
	is a solution of \r{eq:wave1} with initial conditions \r{IC_EH}, with $H$ determined up to  $\nabla \phi$.
\end{proposition}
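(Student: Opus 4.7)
The proposition amounts to making rigorous the equivalence already sketched in the paragraphs preceding its statement, so the plan is to organize that sketch into two clear directions and verify all the divergence-free conditions are preserved.

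For the forward direction, assume $(E,H)$ solves \eqref{eq1a}--\eqref{eq5} with the stated divergence-free initial data. Differentiating \eqref{eq1a} in $t$ and using \eqref{eq2a} to substitute $\mu_0\partial_t H = -\curl E$ eliminates $H$ and gives exactly \eqref{eq:wave1}. The first initial condition in \eqref{IC_wave} is $E_{(0)}$ by hypothesis, and the second one, $E_{(0)}'$, is read off from \eqref{eq1a} at $t=t_0$, which reads $\e_0\p_t(E+\chi^{(1)}E+\chi^{(3)}|E|^2E)|_{t_0}=\curl H_{(0)}$; this is solvable for $\p_t E|_{t_0}$ when $\chi^{(3)}|E_{(0)}|^2$ is small, which is the regime of interest.

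For the converse, let $E_{(0)}$, $E_{(0)}'$ be given such that the left-hand sides of \eqref{IC_E1}--\eqref{IC_E2} are divergence-free; set $D_{(0)}'$ via \eqref{IC_E2}. First I would construct $H_{(0)}$ solving \eqref{EH0} using the Poincaré lemma: since $\div D_{(0)}'=0$, there exists $\tilde H_{(0)}$ with $\curl \tilde H_{(0)}= D_{(0)}'$, and $\tilde H_{(0)}+\nabla\phi$ still satisfies this for any $\phi$; choosing $\phi$ via $\Delta\phi=-\div\tilde H_{(0)}$ secures $\div H_{(0)}=0$. The residual freedom is $\nabla\psi$ with $\Delta\psi=0$, as stated. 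Then I would define $H$ by \eqref{HH0}; differentiating in $t$ yields \eqref{eq2a} automatically, and evaluation at $t=t_0$ gives $H(t_0,\cdot)=H_{(0)}$, so $\div H(t,\cdot)=\div H_{(0)}-\mu_0^{-1}\int_0^t \div\curl E\,\d\t=0$, giving \eqref{eq3a}.

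It remains to verify \eqref{eq1a} and \eqref{eq4a}. For \eqref{eq1a}, I would compute $\p_t[\p_t(\e_0 E+P)-\curl H]$; using \eqref{HH0} this equals $\e_0\p_t^2 E+\p_t^2 P+\mu_0^{-1}\curl\curl E$, which vanishes by \eqref{eq:wave1}. Hence $\p_t(\e_0 E+P)-\curl H$ is constant in $t$, and the normalization \eqref{IC_E2} at $t=t_0$ kills that constant, giving \eqref{eq1a}. Finally, applying $\div$ to \eqref{eq1a} and \eqref{eq2a} shows $\div(\e_0 E+P)$ and $\div(\mu_0 H)$ are time-independent, and both vanish initially by hypothesis and by construction, yielding \eqref{eq3a}--\eqref{eq4a}. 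The main (minor) obstacle is the careful tracking of which initial identities are used at which step: the divergence-free character of $D_{(0)}$ enters only through the preservation-in-$t$ argument, while the divergence-free character of $D_{(0)}'$ is needed both to solve \eqref{EH0} and to normalize the time integration constant for \eqref{eq1a}; keeping these roles distinct makes the proof clean.
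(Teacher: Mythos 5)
Your proof is correct and follows essentially the same route as the paper: eliminate $H$ by differentiating \eqref{eq1a} in time for the forward direction, and for the converse construct a divergence-free $H_{(0)}$ via the Poincar\'e lemma, define $H$ by \eqref{HH0}, and recover \eqref{eq1a} by noting its time derivative vanishes thanks to \eqref{eq:wave1} while the integration constant is killed by \eqref{IC_E2}. The only quibble is in your closing remark: what kills the integration constant is the identity $\curl H_{(0)}=D_{(0)}'$ itself (i.e.\ \eqref{EH0}), the divergence-free character of $D_{(0)}'$ entering only as the solvability condition for that equation.
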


Using the identity
\begin{equation}\label{eq:curl}
	\curl  \curl  = \n \div  -\Delta ,
\end{equation}  
 and writing  $(\e_0\mu_0)^{-1} = c^2$, equation \eqref{eq:wave1}  reduces to 
 \begin{equation}\label{eq:second_order_metric2}
 	(1+ \chi^{(1)})\p_t^2 E-c^2(\Delta E - \nabla \div E)=-\chi^{(3)}\p_t^2 (|E|^2E).
 \end{equation}
 If $\mu_0$ is the magnetic permeability in vacuum, then $c$ is the speed of light. 
The linear part is a formal elasticity system   with a zero pressure speed. On the other hand, the divergence free condition eliminates pressure waves. %

As mentioned in Section \ref{sec:main_results}, we take $\chi^{(1)}=0$.
Taking $\chi^{(1)}$ variable, of magnitude $\sim 1$, as long as $1+\chi^{(1)}>0$, would make the speed $c(x)$  variable but still $\sim 1$ (i.e., not a large or a small parameter), see also Section \ref{sec_scale}.
Finally,  we rescale the variables to ensure light speed $c=1$, thus arriving to \eqref{eq:second_order_metric3} and \eqref{div0} from \eqref{eq:second_order_metric2} and \eqref{div} respectively. 
According to  \cite[pp.~67--68]{boyd2020nonlinear}, the contribution of the term  $\n \div E$ in \eqref{eq:second_order_metric3} can be considered negligible in major cases of interest. We do not ignore it however, and we show that  it indeed contributes a lower order term in our case.  Its presence  leads to some technical complications, which we resolved.

\section{The weakly nonlinear geometric optics}\label{:the_weakly_geometric_optics} 
\subsection{The general setup}\label{sec_gen} 
Recall that we seek an asymptotic solution to \eqref{eq:second_order_metric3} under the divergence free condition \eqref{div0}, having the form \eqref{A1} in the region of interest.
As we explain   in Section~\ref{sec_scale}, a different scaling of the solution may affect the qualitative effect one observes.
We now explain our choice of the scaling from the point of view of  weakly geometric optics using a simplified model. 
Assume temporarily  that $u\coloneqq  E$ is scalar and real (ignoring the third term in \eqref{eq:second_order_metric3}), and write $\chi$ instead of $\chi^{(3)}$; then $|u|^2u=u^3$. 
Since 
\[
\chi \partial_t^2u^3 =  3\chi u^2\partial_{t}^{2}u +6\chi u (\partial_tu)^2,
\]
we can write a simplified version of \r{eq:second_order_metric3} as
\begin{equation} \label{2}
	(1+3\chi u^2)\partial_t^2 u - c^2\Delta u + 6\chi u (\partial_tu)^2= 0.
\end{equation}
This gives us speed
\begin{equation}\label{cchi}
	c_\chi = (1+3\chi u^2)^{-1/2}c.
\end{equation}
When $\chi u^2$ is of order $\sim 1$, the speed is effectively changed by the solution, so we are in the fully nonlinear regime, where in particular, the geometry of the rays changes (\cite{Metivier-Notes}). The DC Kerr effect does not do that. When $\chi u^2\sim h$, the eikonal equation describing the propagation of singularities (an analog of \eqref{eq:dispersion_rel} below) stays the same as that of the linear wave equation, so the geometry is unchanged, but the nonlinearity  affects the transport (profile) equation for the leading term of the amplitude. 
This is the weakly nonlinear geometric regime by definition. Moreover, if the leading term in the asymptotic expansion of $u$ were constant, we would expect a linear leading transport equation, which we confirm below in the vector valued case. The effective change of the speed by $\sim h$ would cause a retardation $\sim h$ proportional to the wavelength $2\pi h$, which would create a phase shift of a size comparable to the latter over distance $\sim 1$, and this is what we want to model.  The physical $\chi$ is very small in the metric system of units but we can always rescale $u$, which in turn rescales $\chi$ as well (see Section~\ref{sec_scale}), so we do not treat $\chi$ as a ``small'' quantity.  %
The discussion above indeed suggests the scaling $h^{1/2}\underline{E}(x,h)$ for  the strong stationary electric field in a  solution \eqref{A1}. 

We now comment on  our choice of initial conditions \eqref{IC1}.
As mentioned, we want $h^{1/2}\underline{E}(x,h)$ to solve \eqref{div0}, at least in a 
large region $B(0,R_0)$.
This was essentially the content of Theorem \ref{thm_main}\ref{item:a}; we state here a very similar result, whose main addition to the former is a statement regarding the extension of $\underline{E}$ on all of $\R^{3},$ in the case of a constant leading order term. We will prove it together with Theorem \ref{thm_main}\ref{item:a} in Section~\ref{eq:choosing_the_strong_electric_field}.

\begin{lemma}\label{lm:stationary}
Choose a large ball $B(0,R_0)\subset \R^3$ with $R_0\gg R$ and such that $\supp U_{\rm init}\subset B(0,R_0)$ and fix $s>5/2$. There exists a strong field $h^{1/2}\underline{E}(\cdot ,h)\in H^{s-1}_{\mathrm{loc}}(\R^3)\cap L^\infty(\R^3)$ as in \eqref{E0} satisfying \eqref{div0} outside a small neighborhood of $\partial B(0,R_0)$.
It can be taken such that $\u{E}_0=\text{const}.$ on $\R^3 $ and  $\underline{E}_k\in C_0^{\infty}(\R^3)$ for $k\geq 1$, with $\curl \u E_k=0$ on $\R^3$ for all $k\geq 0$.
\end{lemma}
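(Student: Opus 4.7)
The plan is to apply Theorem~\ref{thm_main}\ref{item:a} on a slightly enlarged ball $B(0,R_0+\epsilon)$ with boundary datum $f(x)=|\u{E}_0|x_3$, and then to pass to $\R^3$ by cutting off the corrections at the level of their scalar potential. The constant leading term $\u E_0 = |\u E_0|e_3$ automatically satisfies $\div\u E_0=0$, so all that is needed is to control the correction $\u E - \u E_0$.

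First, since $u_f(x)=|\u{E}_0|x_3$ is harmonic on all of $\R^3$, Theorem~\ref{thm_main}\ref{item:a} (applied with $R_0+\epsilon$ in place of $R_0$, for a small $\epsilon>0$) furnishes, for $0<h\le h_0$, a curl-free stationary solution $\u{E}^{\mathrm{int}}(\cdot,h)\in H^{s-1}(B(0,R_0+\epsilon))$ of \eqref{eq:second_order_metric3}--\eqref{div0} with boundary value $|\u{E}_0|e_3$. By simple connectedness I may write $\u{E}^{\mathrm{int}}=|\u{E}_0|e_3+\nabla v$ for a scalar potential $v(\cdot,h)$. Iterating the elliptic construction underlying Theorem~\ref{thm_main}\ref{item:a} produces a full asymptotic series $v\sim\sum_{k\ge1}h^k v_k$ in which each $v_k$ solves a linear Dirichlet problem whose source is a smooth (polynomial) expression in $\chi^{(3)}$ and the previous $\nabla v_j$'s; since $\chi^{(3)}\in C_0^\infty$, interior elliptic regularity gives $v_k\in C^\infty(B(0,R_0+\epsilon))$, with $v_k$ harmonic off $\supp \chi^{(3)}\subset B(0,R)$.

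Second, choose $\phi\in C_0^\infty(B(0,R_0+\epsilon/2))$ with $\phi\equiv1$ on a neighborhood of $\overline{B(0,R_0)}$, and set
\[
 \tilde{\u{E}}(x,h) \coloneqq |\u{E}_0|e_3+\nabla\bigl(\phi(x)\,v(x,h)\bigr), \qquad \tilde{\u E}_0 \coloneqq |\u E_0|e_3, \qquad \tilde{\u E}_k \coloneqq \nabla(\phi v_k) \text{ for } k\ge 1.
\]
Because $\tilde{\u{E}}$ is a constant plus a gradient, $\curl\tilde{\u{E}}=0$ on $\R^3$, and similarly $\curl\tilde{\u E}_k=0$. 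Each $\tilde{\u E}_k$ is in $C_0^\infty(\R^3)$, and $\tilde{\u{E}}\in H^{s-1}_{\mathrm{loc}}(\R^3)\cap L^\infty(\R^3)$. On $B(0,R_0)$ the cutoff is identically one, so $\tilde{\u{E}}$ coincides with $\u{E}^{\mathrm{int}}$, which solves \eqref{div0} by construction; outside $B(0,R_0+\epsilon/2)$, $\tilde{\u{E}}\equiv|\u{E}_0|e_3$ is constant and $\chi^{(3)}\equiv 0$, so \eqref{div0} holds trivially. Hence \eqref{div0} can fail only in the transition annulus $B(0,R_0+\epsilon/2)\setminus B(0,R_0)$, a small neighborhood of $\partial B(0,R_0)$, which is exactly the latitude granted by the statement.

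The main obstacle I anticipate is reconciling three features at once: curl-freeness, compact support of the corrections $\u E_k$, and \eqref{div0}. Enforcing \eqref{div0} \emph{globally} on $\R^3$ would require solving a linear elliptic equation for each $v_k$ on all of $\R^3$, and the Newton-potential tails produced by a compactly supported but nonzero source decay only like $|x|^{-2}$, which is inconsistent with $\u E_k \in C_0^\infty$. The resolution is precisely the geometric slack of the lemma: by performing the cutoff on the scalar potential rather than on the vector field, one preserves curl-freeness and compact support simultaneously, at the price of breaking \eqref{div0} only within the permitted annular neighborhood of $\partial B(0,R_0)$.
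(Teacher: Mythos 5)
Your proposal is correct and follows essentially the same route as the paper: both reduce to the nonlinear elliptic problem underlying Theorem~\ref{thm_main}\ref{item:a} with boundary datum $f=|\u{E}_0|x_3$ (so that the leading term of $\nabla\psi$ is the constant $|\u{E}_0|e_3$), and both extend the curl-free corrections to $\R^3$ at the level of the scalar potential, accepting failure of \eqref{div0} only in a small annulus around $\partial B(0,R_0)$. The sole, immaterial difference is mechanical: the paper extends each $\psi^{(j)}$ and the Sobolev remainder from $B(0,R_0)$ to a slightly larger domain via extension operators, whereas you solve on an enlarged ball and multiply the potential by a cutoff; both devices preserve curl-freeness and compact support of the $\u{E}_k$, $k\geq 1$, simultaneously.
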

\noindent 	 Lemma~\ref{lm:stationary} implies that the first term in \eqref{IC1} generates a stationary electric displacement field $\underline D$ that is divergence free in a large enough domain.
To make the second term in \eqref{IC1} divergence free as well, first write
\[
\div \left( U_{\rm init}(x) \cos \frac{x\cdot\omega}{h} \right)= \left(\div  U_{\rm init}(x)\right)   \cos \frac{x\cdot\omega}{h} -h^{-1}\omega\cdot   U_{\rm init}(x)\sin  \frac{x\cdot\omega}{h}.
\]
We choose $U_{\rm init}(x)$ so that \eqref{IC0} is satisfied.
If $\omega=e_1 := (1,0,0)$, which we can always assume, this can be achieved by taking 
\begin{equation}   \label{ic1}
  U_{\rm init}(x) = (0,-\partial_{x_3}\rho,\partial_{x_2}\rho ),
\end{equation}  
with some $\rho\in C_0^\infty$. Below, we will also consider $U_{\rm init}$ with the property to   be a non-zero constant field in the ``core'' of the beam. This can be done by taking 
\begin{equation}   \label{ic2}
\rho(x_2,x_3)=-a_2x_3+a_3x_2 \; \text{when $| (x_2,x_3)|\le r_0$} 
\end{equation}
 with some $r_0>0$ (the radius of  ``core'') and $(a_2,a_3)\not=(0,0)$.
Then   $  U_{\rm init}(x) = (0,a_2, a_3 )$  when $| (x_2,x_3)|\le r_0$.

 We now verify that $D_{(0)}$ in \r{IC_E1} is divergence free in the region of interest (with $\chi^{(1)}=0$). The support property of  $U_{\rm init}$ yields
\begin{equation}
	E_{(0)}+  \chi^{(3)} |E_{(0)}|^2 E_{(0)} =   E_{(0)}+ \chi^{(3)} h^{3/2} |\underline{E}|^2 \underline{E} = \big( h^{1/2}\underline{E} + \chi^{(3)} h^{3/2} |\underline{E}|^2\underline{E}\big)  +  2h^{3/2}   U_{\rm init}(x) \cos \frac{x\cdot\omega}{h},
\end{equation}
and both terms are divergence free outside a neighborhood of $\partial B(0,R_0)$. Next, $D_{(0)}'$ in \r{IC_E2} is also divergence free because $E'_{(0)}=0$. 

\begin{remark}
	An alternative way to  choose initial conditions is by taking
	\[
	E|_{t=0}=  h^{1/2}\underline{E}(x,h)+2 h^{3/2}  h\curl \left( \tilde{U}_{\rm init}(x) \sin \frac{x\cdot\omega}{h}\right), \quad \partial_t E|_{t=0}=0.
	\]
Then 	
\[
E|_{t=0}-  h^{1/2}\underline{E}(x,h) =  h^{3/2} \left[(\omega\times   \tilde{U}_{\rm init}(x)) \cos \frac{x\cdot\omega}{h} +  h\left(\curl  \tilde{U}_{\rm init}(x) \right)\sin \frac{x\cdot\omega}{h}\right]
\]
and $ U_{\rm init} \coloneqq  \omega\times   \tilde{U}_{\rm init}$ is normal to $\omega$ but not automatically divergence free as above (but it does not need to be), unless we chose it properly. We can make $\tilde{U}_{\rm init}$ constant in the core of the beam as well. 
\end{remark}

\subsection{Initialization of the problem, the linear solution} \label{sub:initialization_of_the_problem_the_linear_solution}
For $0\leq t\ll 1$, \eqref{eq:second_order_metric3} and \eqref{IC1} have a solution  in the linear regime, and an expansion for it in $h$ can be constructed using geometric optics for the scalar wave equation in $\R^{3}$.
Indeed, if we produce solutions of the equation
\begin{equation}\label{eq:simplified}
	\partial_t^2 E-\Delta E=0,
\end{equation}
with $E$ vector valued and with initial conditions satisfying $\div E\big|_{t=0}=\div \partial_t E\big|_{t=0}$, then  $\div E=0$ stays true for all $t$.
In the previous section we arranged the initial conditions in such a way that the divergence free condition holds in a neighborhood of $\supp U_{\rm init}$.
This implies that for such initial data, a solution of \eqref{eq:simplified} (which will have finite speed of propagation) will also solve \eqref{eq:second_order_metric3} for small enough $t$, and it will be divergence free near $\supp U_{\rm init}$.
By our uniqueness result (see Proposition~\ref{prop_Gues}), this solution will be the only solution to \eqref{eq:second_order_metric3}, \eqref{IC1} in a suitable class.  
Near $\supp U_{\rm init}$, we have
\begin{equation} 
\begin{aligned}\label{A1'}
  		 E = h^{1/2}\underline{E}(x,h)  + h^{3/2} U_{\rm in }(t,x,\phi_{\rm in }/h,h)+h^{3/2} U_{\rm out }(t,x,\phi_{\rm out }/h,h),\quad 	\phi_{\rm in/out}= \mp t+x\cdot \omega
  \end{aligned}  
\end{equation}
and, for $t$ small,
\begin{equation}\label{Uout'}
	\begin{aligned} 
 U_{\rm in}(t,x,\theta,h) &=   e^{\i \theta} a_{{\rm in}, +}(t,x,h)+ e^{-\i \theta} a_{{\rm in}, -}(t,x,h),\\
U_{\rm out}(t,x,\theta,h) &= e^{\i \theta} a_{{\rm out}, +}(t,x,h)+  e^{-\i \theta } a_{{\rm out}, -}(t,x,h).
\end{aligned}	
\end{equation}
The amplitudes have asymptotic expansions 
\begin{equation}   \label{ain}
a_{\rm in/out,\pm} \sim \sum_{k=0}^\infty h^k a_{\rm in/out,\pm}^{(k)} (t,x).
\end{equation}
Substituting into \eqref{eq:simplified} and matching powers of $h$, the transport equations for the leading order amplitudes are found to be
\begin{equation}   \label{eq:tr0}
  (\p_t +\omega\cdot\n ) a_{\rm in,\pm}^{(0)} = 0, \quad  (\p_t -\omega\cdot\n ) a_{\rm out,\pm}^{(0)} = 0
\end{equation}
with initial conditions satisfying
\[
a_{\rm in,+}^{(0)} + a_{\rm out,+}^{(0)}=U_{\rm init}, \quad 
a_{\rm in,-}^{(0)} + a_{\rm out,-}^{(0)}=U_{\rm init}   \quad\text{for $t=0$} .
\]
The second initial condition in \r{IC1} implies
\[
-a_{\rm in,+}^{(0)} + a_{\rm out,+}^{(0)}=0, \quad 
a_{\rm in,-}^{(0)} - a_{\rm out,-}^{(0)}=0   \quad\text{for $t=0$} .
\]
Therefore,
\[
a_{\rm in,+}^{(0)} = a_{\rm out,+}^{(0)}=a_{\rm in,-}^{(0)} = a_{\rm out,-}^{(0)}= \frac12 U_{\rm init}  \quad\text{for $t=0$} .
\]
Solving the transport equations, we get 
\[
a_{\rm in,+}^{(0)} = a_{\rm in,-}^{(0)}=\frac12 U_{\rm init}(x-t\omega),\quad 
a_{\rm out,+}^{(0)} = a_{\rm out,-}^{(0)}= \frac12 U_{\rm init}(x+t\omega). \quad 
\]
Thus the leading term of the beam is 
\begin{equation}   \label{U0}
 U_{\rm init}(x-t\omega)\cos\Big(\frac{x\cdot\omega-t}{h}\Big) + U_{\rm init}(x+t\omega)\cos\Big(\frac{x\cdot\omega+t}{h}\Big),
\end{equation}
representing two ``waves'' (not solutions of the wave equation by themselves) moving in opposite directions. 
The lower order terms of the amplitudes can be computed consecutively in a similar way, solving non-homogeneous versions of the transport equations  \eqref{eq:tr0} (that is, with the same vector fields), with sources depending on  terms we already computed and their derivatives. We can construct as many terms in the expansion as we want, say up to $h^N$ with a remainder $O(h^{N+1})$. 
Note that each of $U_{\rm in/out}$ propagates with finite speed, and is smooth if $U_{\rm init}$ is.

\subsection{The asymptotic solution: the leading term \texorpdfstring{$U_0$}{U zero}} 
  \label{sub:the_asymptotic_solution_the_leading_term}
Our goal is to construct  an asymptotic solution of \eqref{eq:second_order_metric3}, \eqref{IC1} for $t\in [0,T]$ with a fixed $T>0$. 
Since $U_{\rm out}$ propagates away from $B(0,R)$  in a linear regime,
the terms in its expansion do not contribute to this solution there.
We denote 
\begin{equation}
	 {E_{\rm in}} = h^{1/2}\underline{E}(x,h)  + h^{3/2} U_{\rm in }(t,x,\phi_{\rm in }/h,h)
\end{equation}
 and use the ansatz
\begin{equation}\label{ansatz}
	  E_{\rm in}
	\sim h^{1/2}\u{E}_0+h^{3/2}\big(\u{E}_1(x)+U_0(t,x,\phi/h)\big)+h^{5/2}\big(\u{E}_2(x)+U_1(t,x,\phi/h)\big)+\dots,
\end{equation}
where $\phi=\phi_{\rm in }=-t+x\cdot \omega$.
A more accurate notation would be $U_{\rm{ in},k} $ instead of $U_k$, but we will keep the latter to improve readability.
If $K \geq 0$, we say that a formal $h$-dependent expansion for $E$ as in \eqref{A1'} solves \eqref{eq:second_order_metric3} up to order $O(h^{K-1/2})$ if 
\begin{equation}\label{eq:solve_to_order}
		\p_t^2 E- \Delta E + \nabla \div E+\chi^{(3)}\p_t^2 (|E|^2E)\sim \sum_{k\geq K}h^{k+1/2}F_k(t,x,\phi_{\rm in}/h,\phi_{\rm out}/h),
\end{equation}
where $F_k$ are smooth and $2\pi$-periodic in the last two variables, and
 similarly for $ E_{\rm in}$ and $h^{3/2}U_{\rm out}$.  

The next lemma, proved in Section \ref{sec:the_profile_equations_proofs_of_lemmas_ref_lm_eikonal_transport}, confirms the predictions in Section ~\ref{sec_gen}, namely that the  nonlinear term $\chi^{(3)}\partial_t (|E|^2E)$ does not affect the dispersion relation, but it does affect the transport equation for the  first profile $U_0$.
We write $\Pi_\omega$ and $\Pi_\omega^{\perp}$ for the orthogonal projections onto the span of $\omega$ and its orthogonal complement respectively.

\begin{lemma}\label{lm:eikonal_transport}
Suppose that  $ E_{\rm in}$ in \eqref{ansatz} satisfies \eqref{eq:second_order_metric3} to order $O(h^{1/2})$.
Then the phase function $ \phi =\omega\cdot x - t$ is characteristic, that is, it satisfies 
the dispersion relation\footnote{Recall that $\nabla$ denotes gradient only in $x$.} 
\begin{equation}\label{eq:dispersion_rel}
	\det \left( ((\partial_t \phi)^2-|\nabla \phi|^2 )\Id_3+\nabla \phi\otimes \nabla \phi\right)=0.
\end{equation} 
With this  $\phi$, and if $\u{E}_0$ in \eqref{E0} is constant with $\u{E}_0\cdot \omega=0$, the first profile $U_0$ satisfies the equations 
\begin{align}
		 \partial_\theta \Pi_\omega U_0=0, \label{tE0}\\
	\mathcal{L}( \u{E}_0)\Pi_\omega^\perp \p_\theta U_0=0,
\label{tEp}
\end{align}
where $\mathcal{L}$ is the vector valued transport operator
\begin{equation}\label{eq:l_op}
	\mathcal{L}(\u{E}_0)V=-2(\partial_t + \omega\cdot\nabla) V +\chi^{(3)} |\u{E}_0|^2(\partial_\theta V)+2\chi^{(3)} (\u{E}_0\cdot \partial_\theta V) \u{E}_0.
\end{equation}
If $\phi$ is as above and \eqref{tE0} is satisfied, $E_{\rm in}$ solves \eqref{eq:second_order_metric3} to order $O(h^{-1/2})$.  
\end{lemma}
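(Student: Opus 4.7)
The plan is to substitute the ansatz \eqref{ansatz} into \eqref{eq:second_order_metric3} (with the nonlinearity moved to the left), treat $\theta=\phi/h$ as an independent variable, and match powers of $h$. First I would apply the chain rule: since $\partial_t \phi = -1$ and $\nabla \phi = \omega$, differentiation with respect to $t$ and $x$ on functions of $(t,x,\phi/h)$ expands as $\partial_t \to \partial_t - h^{-1}\partial_\theta$ and $\nabla \to \nabla + h^{-1}\omega\,\partial_\theta$. A direct computation gives
\begin{align*}
\partial_t^2 &\to \partial_t^2 - 2h^{-1}\partial_t\partial_\theta + h^{-2}\partial_\theta^2, \\
\Delta &\to \Delta + 2h^{-1}\omega\cdot\nabla\partial_\theta + h^{-2}\partial_\theta^2, \\
\nabla\div V &\to \nabla\div V + h^{-1}\bigl(\omega\,\partial_\theta\div V + \nabla(\omega\cdot\partial_\theta V)\bigr) + h^{-2}\omega(\omega\cdot\partial_\theta^2 V).
\end{align*}
In parallel, expand $|E|^2 E$ using \eqref{ansatz}: the lowest contribution is $h^{3/2}|\underline{E}_0|^2 \underline{E}_0$ (constant, hence killed by $\partial_t^2$), while the next is of order $h^{5/2}$ and equals $|\underline{E}_0|^2(\underline{E}_1+U_0) + 2(\underline{E}_0\cdot(\underline{E}_1+U_0))\underline{E}_0$; after $\partial_t^2$ acts and the two $h^{-1}\partial_\theta$ factors are extracted, this produces an $O(h^{1/2})$ contribution.

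At order $h^{-1/2}$ only the linear part contributes, and the resulting equation reads $L\,\partial_\theta^2 U_0 = 0$ with $L = \bigl((\partial_t\phi)^2-|\nabla\phi|^2\bigr)\Id_3 + \nabla\phi\otimes\nabla\phi$. For a non-trivial $\theta$-dependent $U_0$ this forces $\det L = 0$, which is exactly \eqref{eq:dispersion_rel}. For our particular $\phi = \omega\cdot x - t$ one has $L = \omega\otimes\omega$, which is manifestly rank one and singular, and the equation reduces to $\omega(\omega\cdot \partial_\theta^2 U_0) = 0$. By $2\pi$-periodicity in $\theta$, this is equivalent to $\omega\cdot U_0$ being $\theta$-independent, that is, $\partial_\theta\Pi_\omega U_0 = 0$, which is \eqref{tE0}. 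This step also establishes the last assertion of the lemma: once \eqref{tE0} holds the $h^{-1/2}$ term vanishes, hence $E_{\rm in}$ solves \eqref{eq:second_order_metric3} to order $O(h^{-1/2})$ in the sense of \eqref{eq:solve_to_order}.

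To derive \eqref{tEp}, I would next examine order $h^{1/2}$, where three contributions appear: (i) the $h^{-1}$ linear operator from above, call it $M$, applied to $U_0$; (ii) the leading term $L\,\partial_\theta^2 U_1$ coming from $h^{5/2} U_1$; and (iii) the nonlinear piece $\chi^{(3)}\bigl[|\underline{E}_0|^2 \partial_\theta^2 U_0 + 2(\underline{E}_0\cdot\partial_\theta^2 U_0)\underline{E}_0\bigr]$ extracted from $\chi^{(3)}\partial_t^2(|E|^2 E)$. Since the range of $L = \omega\otimes\omega$ is $\Span\{\omega\}$, solvability for $U_1$ requires the $\Pi_\omega^\perp$-projection of (i) and (iii) to vanish. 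Under $\Pi_\omega^\perp$ the piece $\omega\,\partial_\theta\div U_0$ in $MU_0$ is killed automatically, while $\nabla(\omega\cdot\partial_\theta U_0)$ vanishes after invoking \eqref{tE0} (so $\omega\cdot\partial_\theta U_0 = \partial_\theta(\omega\cdot U_0)=0$); the $\underline{E}_0$ factor in (iii) is unchanged because $\underline{E}_0\perp\omega$ by hypothesis. Collecting the remaining terms gives exactly $\mathcal{L}(\underline{E}_0)\,\partial_\theta\Pi_\omega^\perp U_0 = 0$ with $\mathcal{L}$ as in \eqref{eq:l_op}, which is \eqref{tEp}.

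The main obstacle I anticipate is the careful bookkeeping of the $\nabla\div$ contribution, absent in the scalar toy model of Section~\ref{sec_gen}: it is this term that produces $\omega\,\partial_\theta\div V$ and $\nabla(\omega\cdot\partial_\theta V)$ at order $h^{-1}$, and while the first drops out under $\Pi_\omega^\perp$ for purely algebraic reasons, the second survives until \eqref{tE0} is used. One should also verify that no nonlinear contributions slip into orders $h^{-3/2}$ or $h^{-1/2}$; this follows because $|E|^2E$ begins at $h^{3/2}$ and $\partial_t^2$ contributes at most $h^{-2}$, so the earliest nonlinear term appears at $h^{1/2}$ as used above.
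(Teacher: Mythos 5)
Your proposal is correct and follows essentially the same route as the paper's proof: substitute the ansatz, use the chain rule in $\theta=\phi/h$, read off the $h^{-1/2}$ coefficient to get the rank-one operator $\omega\otimes\omega$ acting on $\partial_\theta^2 U_0$ (hence \eqref{eq:dispersion_rel} and \eqref{tE0}), and then impose that the $\Pi_\omega^\perp$-projection of the $h^{1/2}$ coefficient vanish to obtain \eqref{tEp}, with the nonlinearity first entering at order $h^{1/2}$ exactly as you argue. The only cosmetic difference is that the paper organizes the nonlinear term via the product-rule identity \eqref{Ett} before counting powers of $h$, while you Taylor-expand $|E|^2E$ directly and then apply the expanded $\partial_t^2$; both give the same contribution $\chi^{(3)}\bigl[|\u{E}_0|^2\partial_\theta^2 U_0+2(\u{E}_0\cdot\partial_\theta^2 U_0)\u{E}_0\bigr]$.
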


Since $U_0$ is assumed to be periodic in $\theta$, it has a Fourier series expansion. 
At the expense of adding a $\theta$-independent term to $U_0$, we can cancel one power of $\p_\theta$ in \eqref{tE0}-\eqref{tEp}. 
The initial condition  $U_0=U_{\rm init}(x)\cos\theta$ at $t=0$ in \r{U0} is of single phase, with $\theta=\phi/h$  (more precisely, with $\phi/h$ and $-\phi/h$ in complex notation), so 
since \eqref{tE0}-\eqref{tEp} are  linear, no higher order harmonics are generated in the principal term $U_0$.
Thus its Fourier series  reduces to  
\begin{equation}   \label{U00}
U_0 =  A_0(t,x)\cos\theta + B_0(t,x)\sin\theta + C_0(t,x),
\end{equation}
where from \eqref{tE0} it follows that $A_0,B_0$ must take values in $\omega^{\perp}$ and $C_0$ takes values in $\R^3$ (unconstrained, though we will see in Section \ref{sub:the_asymptotic_solution_full_expansion} that it vanishes). 
Introduce the characteristic variables
\begin{equation}\label{eq:char_coordinates}
	(s,y) = (t, x-t\omega); \quad \text{then $(t,x) = (s, y+s\omega )$} \text{ and }\partial_t + \omega\cdot\nabla_x = \partial_s.
\end{equation}
Passing to the characteristic coordinates,  
\eqref{tEp} integrated in $\theta$
yields
\begin{align*}
-2\partial_s (A_0\cos\theta +B_0\sin\theta)& +  \chi^{(3)} |\u{E}_0|^2 (-A_0 \sin\theta + B_0\cos\theta)\\
 &-  2\chi^{(3)}  \sin\theta (\u{E}_0\cdot A_0) \u{E}_0 +2\chi^{(3)} \cos\theta (\u{E}_0\cdot B_0) \u{E}_0 =0. 
\end{align*}
Note that this ODE carries no information about $C_0$. 
It is equivalent to the $6\times 6$ system
\be{AB}
\frac{\d}{\d s}\begin{pmatrix*} A_0\\B_0 \end{pmatrix*} +\frac12 \chi^{(3)} |\u{E}_0|^2 \begin{pmatrix*} -B_0\\A_0 \end{pmatrix*} +\chi^{(3)} \begin{pmatrix*}- (\u{E}_0\cdot B_0)\u{E}_0\\ (\u{E}_0\cdot A_0)\u{E}_0 \end{pmatrix*} = 0.
\ee
Note that all quantities above are restricted to a characteristic line, and are functions of $s$ and of the other parameters determining the line. 
We recast \eqref{AB} as
\begin{equation}   \label{eq:tr'a}
	\left[ \frac{\d}{\d s} + \frac12 \chi^{(3)}  \begin{pmatrix*} 0&-|\u{E}_0|^2\Id_3 -2\u{E}_0\otimes \u{E}_0\\ |\u{E}_0|^2\Id_3 +2 \u{E}_0\otimes \u{E}_0&0 \end{pmatrix*} 
	\right] \begin{pmatrix*} A_0\\B_0 \end{pmatrix*} = 0.
\end{equation}

Next, we assume without loss of generality that $\u{E}_0=|\u{E}_0|e_3$. Equation \eqref{eq:tr'a} becomes
\[
\left[ \frac{\d}{\d s} + \frac12 \chi^{(3)} |\u{E}_0|^2  \begin{pmatrix*} 0&-\mathrm{diag}(1,1,3)\\ \mathrm{diag}(1,1,3)&0 \end{pmatrix*} 
\right] \begin{pmatrix*} A_0\\B_0 \end{pmatrix*} = 0.
\]
Set $\mathcal{A}=A_0+\i B_0$. 
Temporarily adopting the notation   $\tilde \chi^{(3)}(s) = \chi^{(3)}(y+s\omega)$, we get
\begin{equation}
\frac{\d}{\d s}\mathcal{A} =  \frac12 \tilde \chi^{(3)}(s) |\u{E}_0|^2 \begin{pmatrix*}  -\i &0&0\\ 0& -\i &0\\0&0& -3 \i  \end{pmatrix*} \mathcal{A},\label{eq:complex_sys}	
\end{equation}
which  is a diagonal system. 
Recall \eqref{tau} and write
\begin{equation}\label{eq:tau_tilde}
	\tilde \tau(s) \coloneqq  \tau(y+s\omega)= \frac12|\u{E}_0|^2 \int_{-\infty}^s \tilde \chi^{(3)}(\sigma)\,\d\sigma,
\end{equation}
The solution of \eqref{eq:complex_sys} is now seen to be $$\mathcal{A}= (c_1e^{-\i \tilde \tau}, c_2e^{-\i\tilde  \tau}, c_3e^{-\i3 \tilde  \tau}).$$ So we get components with fundamental periods $2\pi$, $2\pi$, $2\pi/3$. In particular, $\u{E}_0$ creates a phase shift of the third component different that is from the other two.

We now take the initial condition \r{U0} into account. In characteristic coordinates, $\phi=y\cdot\omega$. %
The initial condition for $\mathcal{A}$ is $\mathcal{A}= U_{\rm init}(y\cdot\omega)$ for $0\leq t\ll 1$. Note that we get the initial condition $C_0=0$ when $t=0$, which we will use later. Since we chose the direction of propagation to be perpendicular to $\u{E}_0$, we have $\omega_3=0$, and without loss of generality we can assume $\omega=(1,0,0)$ as in Theorem \ref{thm_main}. Since $U_{\rm init}$ is orthogonal to $\omega$ by \eqref{IC0}, it is of the form $U_{\rm init} = (0, U_{{\rm init},2}, U_{{\rm init},3})$ for $0\leq t\ll 1$. 
Then $\mathcal{A}=  \big(0,U_{{\rm init},2}(y)  e^{-\i \tilde \tau}, U_{{\rm init},3}(y) e^{-\i 3\tilde \tau}\big)$,
where $U_{\rm init}$ is written in the coordinates $y=(x_1-t,x_2, x_3)$, $s=t$. That gives us a leading term of the solution 
	\begin{align}
	 E_{\rm in}&\sim h^\frac12 |\u{E}_0|e_3+ h^\frac32C_0(t,x) +h^\frac32  \Big(0, U_{{\rm init},2}\cos\tilde \tau \cos(\phi/h) - U_{{\rm init},2}\sin\tilde \tau\sin(\phi/h),\\*
	& \qquad \qquad \qquad \qquad  \qquad U_{{\rm init},3} \cos(3\tilde \tau) \cos(\phi/h) -U_{{\rm init},3} \sin(3\tilde \tau)\sin(\phi/h) \Big)+O(h^{5/2})\label{Ein}\\
	&=  h^\frac12 |\u{E}_0|e_3+h^\frac32C_0(t,x)+ h^\frac32  \big(0, U_{{\rm init},2}\cos(\phi/h+\tilde \tau), U_{{\rm init},3}\cos(\phi/h+3\tilde \tau)\big)+O(h^{5/2}),
\end{align}
where $U_{\rm init} =U_{\rm init}(y) $, $\tilde \tau=\tau(y+t\omega)$, and  $\phi= y\cdot\omega = -t+x\cdot\omega$ with $\omega=e_1$.
This yields \eqref{exp1}, and \eqref{eq:exp_no_nonlinearity} upon setting $\u E_0=0$, except for the fact that there we have $C_0=0$ (this is shown in Section~\ref{sub:the_asymptotic_solution_full_expansion}) and for the statement there regarding the error (explained in Section \ref{sec:exact_solutions}).  

If we assume $x=y+s\omega$ with $s\gg1$, so that the ray leaves $\supp\chi^{(3)}$, then $\tau=\text{const.}$,  and the third term in \eqref{Ein} stays constant along that ray. In the standard $(t,x)$ coordinates, $\phi=-t+ x\cdot\omega$, and under the same assumption about $\tau$ and for a fixed $x\cdot\omega$, the second term of the electric field $E$ rotates on an ellipse. This is a circular polarization when $U_{{\rm init},2}=U_{{\rm init},3}\not=0$ and $2\tau=(2k+1)\pi/2$, $k$ an integer. It is elliptical in general. 
When $U_{{\rm init},2}=U_{{\rm init},3}=1$, for example,  the half-lengths of the axes are $\sqrt2 \cos\tau$, $\sqrt2 \sin\tau$, respectively. 

We demonstrate this effect in  Figure~\ref{pic4_pr_2015}. The plot represents the change of the polarization along a light ray, i.e., for $s\mapsto (t,x)=(s,y_0+se_1)$ with $y_0$ fixed, and $0<h$ not too small to make the diagrams less cramped. There, $s$, which is also $x_1$ shifted, is along the horizontal axis.  
This is the basis for solving the inverse problem in Theorem~\ref{thm_inverse}. Measuring the change of the polarization, encoded by the phase shift $\tau$, we recover the X-ray transform of $\chi^{(3)}$, and then $\chi^{(3)}$ itself.  
\begin{figure}[!ht] %
  \centering 
\hfill \includegraphics[scale=0.5]{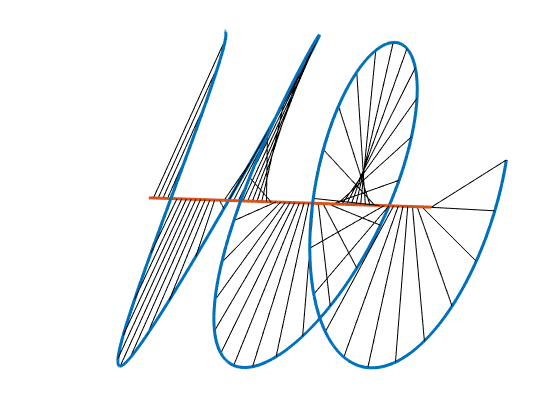}\hfill 
\includegraphics[scale=0.45]{DC_Kerr_pic2.png}\hfill \ 
\caption{\small The polarization changes from linear to ellipsoidal along the way. The two axes are preserved. Left: The polarization starts as a linear one but the nonlinear interaction converts it into elliptical. Right: Frontal view.}
  \label{pic4_pr_2015}
\end{figure} 

\subsection{Interpretation and Kerr cells} \label{sec_cell}
The interpretation of the nonlinear effect in the leading term in \r{exp1} is as follows. The ``strong'' electric field $h^{1/2}\underline E$ changes the effective matrix-valued index of refraction making it birefringent -- that is, creating speeds of light depending on the polarization: different for light  polarized in the direction of $\underline E$ compared to that for light perpendicular to it. The change in speeds between the two polarization directions is proportional to the wavelength, to the square of the amplitude of the electric field, and to a constant depending on the material.  In our analysis, that change is $2 h\tau $, $\tau\sim |\u{E}_0|^2$, which we view as a  phase shift in \r{exp1}. The nonlinearity always produces a phase shift. When $U_{{\rm init},3}=0$ but $U_{{\rm init},2}\not=0$, the light, a priori linearly  polarized in the direction $U_{\rm init} = (0,U_{{\rm init},2}, U_{{\rm init},3}) =(0, U_{{\rm init},2},0)  $, remains linearly polarized  in the same direction but with a phase shift $-\tau$,
increasing along the way. 
Similarly, when $U_{{\rm init},2}=0$ but $U_{{\rm init},3}\not=0$, the light, a priori  linearly polarized in the direction $U_{\rm init} = (0,U_{{\rm init},2}, U_{{\rm init},3}) =(0,0, U_{{\rm init},3})  $, remains linearly polarized as well but the phase shift is $-3\tau$.
By \r{tau}, this is consistent with the heuristic argument in Section~\ref{sec_gen}, where we estimated a reduction of the speed by $\frac32 h\chi^{(3)} |\u{E}_0|^2+O(h^2)$ in \r{cchi}, since in our case $E~\sim h^{1/2}\u{E}_0$. 
When neither $U_{{\rm init},2}$ nor $U_{{\rm init},3}$ vanish, the polarization becomes elliptical. 

Kerr cells, or Kerr cell shutters, are based on the DC Kerr effect, see \cite{J_C_Filippini_1975}. Two polarizing filters are placed at angle $\pi/2$ to each other, one at the entrance of a beam and one at the exit from a cell filled with a material with relatively large nonlinear coefficient $\chi^{(3)}$, for example nitrobenzene. In Figure~\ref{pic4_pr_2015} and in Figure~\ref{pic2} for example, the first filter polarizes the beam linearly as shown there at the left-hand side.  If $\underline E=0$, i.e., no stationary electric field is applied (not shown in the figure), the polarization remains constant, as it follows from our analysis. Then the second filter blocks the light. Applying a ``strong'' $\underline E$ causes the polarization to rotate, and if at the exit the total rotation is not $k\pi $, $k\in \mathbb{N}$, at least some light would be transmitted. Switching $\underline E$ on and off can happen very quickly, giving us a shutter speed far exceeding that  of a mechanical one. 

\begin{figure}[h!] %
	\centering 
	\includegraphics[scale=0.5]{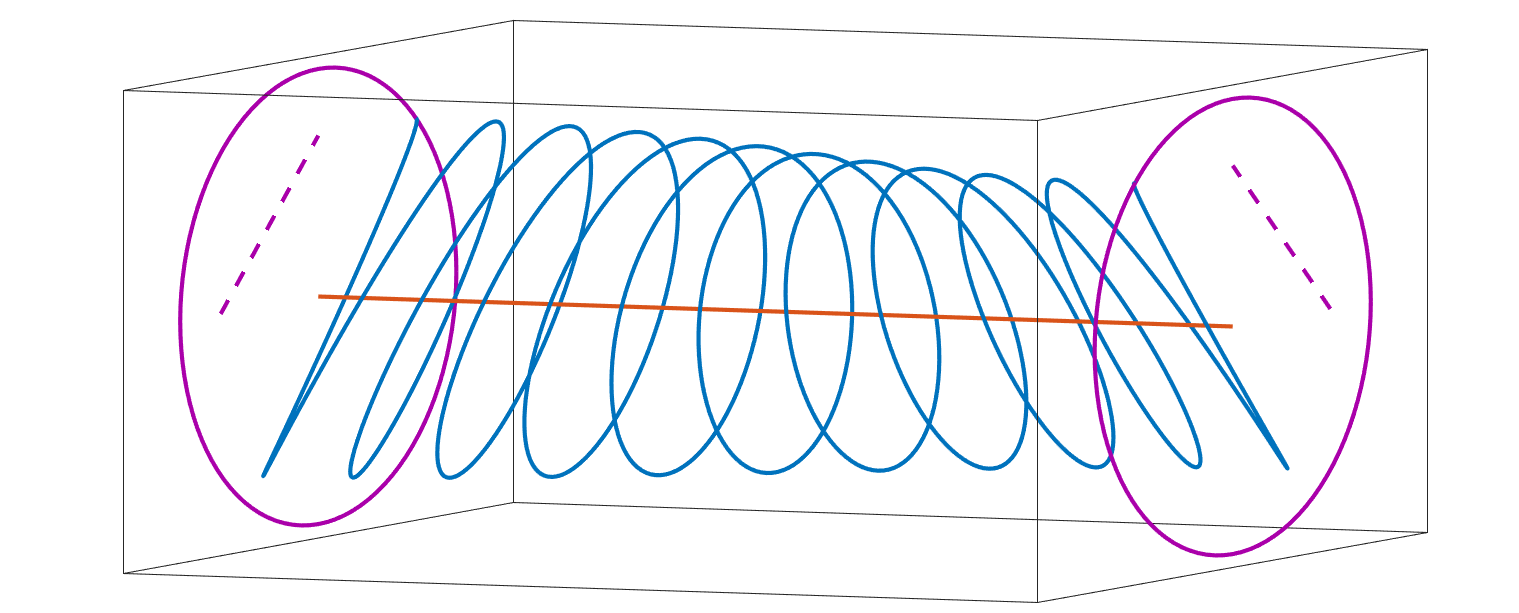}
	\caption{\small Polarization change with a constant $\chi^{(3)}$. The parameters are chosen so that the total rotation is by $\pi/2$. The round polarizing filters have orientations along the dotted lines, and the one on the right transmits all the light.}
	\label{pic2}
\end{figure} 

We obtain maximal transmission when the total rotation  at exit is by $\pi/2 +k\pi$, see Figure~\ref{pic2}. Assuming for the moment that $\chi^{(3)}$ is constant in the region of interest (although then $\chi^{(3)}$, extended globally as zero, is discontinuous, and we do not study that case), by our analysis the phase shift of one component relative to the other is $3\tau-\tau=2\tau$ by absolute value. Thus the maximal effect takes place when $\tau = \pi/2+k\pi$.  On the other hand, $\tau=\frac12 |\u{E}_0|^2 \chi^{(3)}d$, where $d$ is the distance between the polarizers. Therefore, the equation
\[
\frac12 |\u{E}_0|^2 \chi^{(3)}d = \frac\pi 2
\]
can be used to determine $\chi^{(3)}$ when $\u{E}_0$ is given, by varying $\u{E}_0$ from $0$ to the value giving us the highest intensity for the first time (thus eliminating $k\pi$). This method was used in \cite{al2014determination}, 
for example, to measure the Kerr constant, which in turn determines $\chi^{(3)}$ for olive oil. On the other hand, if we know $\chi^{(3)}$, we can find the optimal $\u{E}_0$ maximizing the output. 

To formalize these arguments, assume that $U_{\rm init}$ is as in \eqref{ic1}, \eqref{ic2}, making $U_{\rm init} = (0,a_2,a_3)$ in the ``core'' $x_2^2+ x_3^2\le \rho^2$ of the beam along the axis $\omega=e_1$. The second polarizing filter projects the high frequency part of the beam to $(0,-a_3,a_2)/\sqrt{a_2^2+ a_3^2}$. 
Then the electric energy density $\frac{\e_0}2 |E|^2 $ of the high frequency part of \r{exp1} in the core is proportional to 
\begin{equation}   \label{a1a2}
	\frac{a_2^2a_3^2}{a_2^2+a_3^2} \sin^2 (\tau) \sin^2( \phi/h +2\tau).
\end{equation}
Indeed, the effect of the polarizing filter is to project the term in the second line in \eqref{exp1} to $(0,-a_3,a_2)/\sqrt{a_2^2+ a_3^2}$, so what is left propagating is
\begin{align}
	-\frac{a_2a_3}{{a_2^2+a_3^2}} &\big(\cos(\phi/h +\tau) - \cos(\phi/h +3\tau)\big)(0,-a_3,a_2)\\
	&= 2\frac{a_2a_3}{{a_2^2+a_3^2}} \sin(\phi/h +2\tau)  \sin(\tau) (0,-a_3,a_2).
\end{align}
Its energy is proportional to its norm squared, thus giving us \eqref{a1a2}. 
The magnitude of the amplitude $2 \frac{|a_2a_3|}{\sqrt{a_2^2+a_3^2}} |\sin(\tau)| $ of the oscillations attains its maximum  with respect to $\tau$ when $\tau=\pi/2+k\pi$, provided $a_2a_3\not=0$, as stated above. 
In terms of $(a_3,a_3)$   with a fixed length, the maximum  is achieved when $\tau\not=k\pi$ and $a_2=a_3$, i.e., when the angle with $e_3$ is $\pi/4$. 
{}
\subsection{The asymptotic solution: full expansion} \label{sub:the_asymptotic_solution_full_expansion}
To prove the existence of exact solutions in Section \ref{sec:exact_solutions}, we need to prove that it is possible to construct an approximate solution to \eqref{eq:second_order_metric3} to high order.
As before, we are looking for the incoming part $ E_{\rm in}$ of a solution, as in \r{ansatz},
with all  of the $U_j$ $2\pi$-periodic in $\theta$, extending $U_{\rm in}$  from the linear region ($0\leq t\ll1$) to the nonlinear one.
Lemmas \ref{lm:U1} and \ref{lm:C0} below will be proved in Section \ref{sec:the_profile_equations_proofs_of_lemmas_ref_lm_eikonal_transport}.

\begin{lemma}\label{lm:U1}
Suppose that \eqref{ansatz} satisfies \eqref{eq:second_order_metric3} to order $O(h^{K+3/2})$, where $K\geq 0$.
Assume also that $U_0$ satisfies \eqref{tE0}--\eqref{tEp}, $\phi=-t+\omega$, and the first $K+2$ terms in the expansion of $\underline{E}$ have been constructed as in Lemma \ref{lm:stationary}, with $\u{E}_0=\text{const.}$ and $\u{E}_0\cdot \omega=0$.
Then for $0\leq k\leq K$ the profiles $U_{{k+1}}$ satisfy equations of the form
\begin{align}
	&	\Pi_\omega \partial_\theta U_{k+1}=\int \Pi_\omega F_k\big(\chi^{(3)},\u{E}_j,\partial^\alpha_{(t,x,\theta)}U_\ell:j,\ell=0,\dots, k,|\alpha|\leq 2\big)\d \theta,\label{eq:U_1_first}\\
	&\begin{aligned}
			&\mathcal{L}(\u{E}_0)\Pi_\omega^{\perp}\p_\theta U_{k+1}=\Pi_\omega^{\perp}\nabla( \partial_\theta U_{k+1}(t,x,\theta)\cdot \omega)\\*
			&\qquad +\Pi_\omega^{\perp} G_k(\chi^{(3)}, \underline{E}_j,\partial^\alpha_{(t,x,\theta)}U_\ell:j=0,\dots k+1,\ell=0,\dots ,k ,|\alpha|\leq 2),	
			\end{aligned}
	\label{eq:U_1_second}
\end{align}
where $\mathcal{L}$ is defined in \eqref{eq:l_op} and each entry of $F_k$, $G_k$ is a polynomial in the entries of their arguments;
  the antiderivative  in \eqref{eq:U_1_first} is chosen such that the zeroth Fourier mode of the right hand side vanishes. 

If there exist smooth  profiles $U_{k+1}$, $-1\leq k\leq K$, such that \eqref{tE0}--\eqref{tEp} and \eqref{eq:U_1_first}--\eqref{eq:U_1_second} hold on a time interval $[0,T]$, then $E$ solves \eqref{eq:second_order_metric3} up to order $O(h^{K+1/2})$.
If in addition those profiles (together with their outgoing counterparts) satisfy the initial conditions determined by \eqref{IC1} as explained in Section \ref{sub:initialization_of_the_problem_the_linear_solution}, then for all such $k$, $\Pi_\omega \partial _\theta U_{k+1}$ (resp. $\Pi_\omega^{\perp}\partial _\theta U_{k+1}$)  contains no complex Fourier modes outside the range $[-k-1,k+1]$ (resp. $[-k-2,k+2]$), and $\partial_\theta U_{k+1}\in C_0^\infty([0,T]\times \R^3\times S^1)$.
\end{lemma}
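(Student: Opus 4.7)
The plan is to substitute the ansatz \eqref{ansatz} into \eqref{eq:second_order_metric3} and match powers of $h^{1/2}$. Applying the chain rule to $U_j(t,x,\phi/h)$ with $\phi=-t+x\cdot\omega$, the linear operator $P=\partial_t^2-\Delta+\nabla\div$ produces
$P[U_j(\cdot,\phi/h)]=h^{-2}\, p(-1,\omega)\partial_\theta^2 U_j+h^{-1} M_0\partial_\theta U_j+PU_j|_{\theta=\phi/h}$,
where $p(-1,\omega)=\omega\otimes\omega$ (with kernel $\omega^\perp$) and $M_0=-2(\partial_t+\omega\cdot\nabla)+\omega\,\div(\cdot)+\nabla(\omega\cdot)$ is the associated first order operator. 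Simultaneously, writing $E=h^{1/2}\underline{E}_0+h^{3/2}(\underline{E}_1+U_0)+\dots$ and expanding the cubic $|E|^2E$, each $\partial_t$ hitting a $U$-factor contributes an $h^{-1}$ boost through $\partial_\theta$, so $\chi^{(3)}\partial_t^2(|E|^2E)$ acquires a tractable Taylor expansion in $h^{1/2}$.

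Collecting the coefficient of $h^{k+1/2}$ for $k\geq 0$ produces an equation whose principal piece is $\omega\omega^T\partial_\theta^2 U_{k+1}$ (from the $h^{-2}$ linear contribution of $U_{k+1}$), plus lower order terms depending only on $U_0,\dots,U_k$, $\underline{E}_0,\dots,\underline{E}_{k+1}$, and $\chi^{(3)}$. Projecting onto $\omega$ yields a $\partial_\theta^2$ equation for $\omega\cdot U_{k+1}$; integrating once in $\theta$, with the antiderivative normalized to have zero mean, gives \eqref{eq:U_1_first}. The required solvability (zero $\theta$-mean of the integrand) holds because every such contribution is either $\partial_\theta$-exact (from $M_0\partial_\theta U_k$) or a cubic term with no zero Fourier mode in our construction. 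Projecting onto $\omega^\perp$ annihilates the principal piece and leaves a first order transport-type equation; assembling the $\underline{E}_0$-quadratic pieces of the cubic (using $\underline{E}_0=\textrm{const}$, $\underline{E}_0\cdot\omega=0$) reassembles them into exactly $\mathcal{L}(\underline{E}_0)$ acting on $\Pi_\omega^\perp\partial_\theta U_{k+1}$, and the one remaining $\omega$-coupling term becomes $\Pi_\omega^\perp\nabla(\omega\cdot\partial_\theta U_{k+1})$, producing \eqref{eq:U_1_second}. The converse direction -- given $U_{k+1}$ solving the profile equations, the ansatz solves the PDE to order $O(h^{K+1/2})$ -- is precisely the same bookkeeping read backwards.

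For the final statement, I induct on $k$. The base case $k=-1$ follows from \eqref{tE0}--\eqref{tEp}, which were already established, together with the single-phase initial datum $U_{\rm init}(x)\cos\theta$ forcing $U_0$ to carry only complex Fourier modes $\pm 1$ on its $\omega^\perp$-part and $0$ on its $\omega$-part (this is also where we see $C_0\equiv 0$, since $C_0|_{t=0}=0$ and its transport equation is homogeneous). For the inductive step, each cubic contribution to $F_k$ or $G_k$ has the form $\chi^{(3)}\underline{E}_{a_1}\underline{E}_{a_2}\underline{E}_{a_3}\prod_r U_{j_r}$ (with zero, one, two, or three $U$-factors), and accounting for the $h^{-2}$ (or $h^{-1}$) $\partial_t^2$-boost and the $h$-order constraint shows that the indices must satisfy $\sum_r (j_r+1)\leq k+1$. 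Since $\underline{E}_a$ is $\theta$-independent, the Fourier support of the product is the sum of those of the $U_{j_r}$, which by induction lies in $[-(k+1),k+1]$. Similarly the linear subprincipal contributions from $U_k$ preserve the mode range. Then \eqref{eq:U_1_first} preserves this range (integration in $\theta$ adds no modes), and \eqref{eq:U_1_second} does too, because $\mathcal{L}(\underline{E}_0)$ has $\theta$-independent coefficients and the characteristics are real lines along $(1,\omega)$; the slightly looser range $[-k-2,k+2]$ stated for $\Pi_\omega^\perp\partial_\theta U_{k+1}$ is then automatic. Compact support in $x$ at each time is preserved by finite speed of propagation of the constant-coefficient transport $\partial_t+\omega\cdot\nabla$ combined with compact support of $\chi^{(3)}$, $U_{\rm init}$, and the inductively constructed $\underline{E}_j$ for $j\geq 1$.

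The main obstacle is the algebraic bookkeeping at order $h^{k+1/2}$: one must (i) verify that the $\underline{E}_0$-quadratic pieces coming from $\chi^{(3)}\partial_t^2(|E|^2E)$ regroup cleanly into $\mathcal{L}(\underline{E}_0)$, (ii) check the secular-free compatibility making \eqref{eq:U_1_first} well-posed in the class of $2\pi$-periodic profiles, and (iii) track the Fourier-support growth carefully enough that the induction closes by exactly one mode per step, which is what makes the stated bounds hold uniformly in $k$.
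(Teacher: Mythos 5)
Your overall strategy coincides with the paper's: substitute the ansatz, match powers of $h$, split each order into its $\Pi_\omega$ and $\Pi_\omega^\perp$ components, and induct on the Fourier support. However, there is a concrete off-by-one error in where the two profile equations come from. At order $h^{k+1/2}$ the only appearance of $U_{k+1}$ is through the characteristic principal term $(\omega\cdot\partial_\theta^2 U_{k+1})\omega$, which $\Pi_\omega^{\perp}$ annihilates; the transport term $-2(\partial_t+\omega\cdot\nabla)\partial_\theta(\cdot)$ and the $\underline{E}_0$-quadratic pieces of the cubic at that order act on $U_k$, not on $U_{k+1}$ (both the $h^{-1}$ cross term of the wave operator and the nonlinear contribution $\chi^{(3)}|\underline{E}_0|^2h^{-2}\partial_\theta^2(\cdot)$ applied to the $h^{k+3/2}U_k$ term land at $h^{k+1/2}$). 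So projecting the order-$h^{k+1/2}$ equation onto $\omega^{\perp}$ yields $\mathcal{L}(\underline{E}_0)\Pi_\omega^{\perp}\partial_\theta U_k=\dots$, i.e., \eqref{eq:U_1_second} with the index shifted down by one, not the transport equation for $U_{k+1}$ that you claim. To obtain \eqref{eq:U_1_second} for $U_{k+1}$ one must pass to order $h^{k+3/2}$; this staggering (\eqref{eq:U_1_first} for $U_{k+1}$ from $\Pi_\omega$ at order $h^{k+1/2}$, \eqref{eq:U_1_second} for $U_{k+1}$ from $\Pi_\omega^{\perp}$ at order $h^{k+3/2}$) is exactly what the paper does, and it is also why the right-hand side of \eqref{eq:U_1_second} depends on $\underline{E}_j$ up to $j=k+1$ and contains the coupling term $\Pi_\omega^{\perp}\nabla(\partial_\theta U_{k+1}\cdot\omega)$.

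The same conflation infects your Fourier-mode count. Your constraint $\sum_r(j_r+1)\leq k+1$ and the resulting bound $[-(k+1),k+1]$ for the source cannot apply to the equation satisfied by $\Pi_\omega^{\perp}\partial_\theta U_{k+1}$: at order $h^{k+3/2}$ the cubic produces terms such as $\chi^{(3)}(\underline{E}_0\cdot\partial_\theta^2 U_{j_1})U_{j_2}$ with $j_1+j_2=k$, whose Fourier support genuinely reaches $\pm(k+2)$ — already for $k=0$ the two factors of $U_0$ give modes $\pm2$, which is precisely why $U_1$ carries second harmonics. The range $[-k-2,k+2]$ is therefore not ``automatic slack'' over a tighter bound; it is attained, and the induction closes only by tracking separately that $(\partial_t^2E)^{(s+1/2)}$ contributes modes in $[-s-2,s+2]$ while $E^{(t+1/2)}$ contributes modes in $[-t,t]$, so that the maximal mode in a product at order $h^{k+3/2}$ is $(s+2)+t+(k-s-t)=k+2$. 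With these two corrections your argument becomes the paper's proof; the remaining points (normalization of the $\theta$-antiderivative, linearity in $U_{k+1}$, the converse direction, and compact support via the transport structure and the compact supports of $\chi^{(3)}$, $U_{\rm init}$, $\underline{E}_j$ for $j\geq1$) are handled as you describe.
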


\begin{remark}
	The appearance of two equations satisfied by $\partial_\theta U_{k+1}$ is typical of the WKB method for constructing approximate solutions for symmetric hyperbolic systems of PDE, see \cite{Metivier-Notes}.
\end{remark}

Note that \eqref{eq:U_1_first} and \eqref{eq:U_1_second} are equations for $\partial_\theta U_{k+1}$, however their right hand sides contain $U_\ell$ and its derivatives for $\ell\in \{0,\dots,k\}$. So in general those equations cannot be solved successively to determine $\partial_\theta U_{k+1}$, unless the zero Fourier modes of the $U_\ell$'s have also been determined for $\ell\leq k$.
We explain now how to solve \eqref{eq:U_1_first}--\eqref{eq:U_1_second}, assuming for the moment that this is done;
the zero modes are the subject of Lemma \ref{lm:C0} below.
First, \eqref{eq:U_1_first} clearly determines $\Pi_\omega U_{k+1}$ up to its zero mode.
For $\Pi_\omega^{\perp}U_{k+1} $, let $ U_{k+1}=\sum _{\ell=-k-2}^{k+2}U_{k+1}^{\{\ell\}} e^{i\ell \theta}$, with $U_{k+1}^{\{\ell\}} = \overline{U}_{k+1}^{\{\ell\}}$ and valued in $\C^3$.
Having determined $\partial_{\theta}U_{k+1}\cdot \omega$ and the previous profiles, the right hand side of  \eqref{eq:U_1_second} is a known source and from the proof of Lemma \ref{lm:U1} follows that it contains no Fourier modes outside the range $[-k-2,k+2]$. Equating modes corresponding to $0<|\ell|\leq k+2$ we obtain
\begin{align}\label{Fourier_exp}
			&\mathcal{L}(\u{E}_0)\Big(\sum _{\ell=-k-2}^{k+2}i\ell \Pi_\omega^{\perp }U_{k+1}^{\{\ell\}} e^{i\ell \theta}\Big)= \sum _{0<|\ell|\leq k+2}f_{k+1}^{\{\ell\}} e^{i\ell \theta},
\end{align}
where the right hand side is the Fourier expansion of the source with respect to $\theta$.
At $t=0$, $ \Pi_\omega^{\perp }U_{k+1}^{\{\ell\}}=0$ for all $k\geq 0$.
Using  \eqref{eq:l_op}, and that if $\u E_0=|\u E_0|e_3$ we have
\begin{equation}\label{eq:diagonalization}
 	|\u{E}_0|^2\Id_3 + 2\u{E}_0\otimes \u{E}_0 =|\u{E}_0|^2\diag (1,1,3),
 \end{equation}\eqref{Fourier_exp} becomes in characteristic coordinates \eqref{eq:char_coordinates}
\begin{align}
			\sum _{0<|\ell|\leq k+2}i\ell\big( -2\Id_3\partial_s +i\ell\tilde\chi^{(3)}(s)|\u{E}_0|^2\diag(1,1,3)\big) \Pi_\omega^{\perp }U_{k+1}^{\{\ell\}} e^{i\ell \theta}= \sum _{0<|\ell|\leq k+2}f_{k+1}^{\{\ell\}} e^{i\ell \theta},
\end{align}
or equivalently, using the notation \eqref{eq:tau_tilde},
\begin{align}
			&\big( \Id_3\partial_s -\frac12i\ell\tilde\chi^{(3)}(s)|\u{E}_0|^2\diag(1,1,3)\big) \Pi_\omega^{\perp }U_{k+1}^{\{\ell\}} =\frac{i}{2\ell}f_{k+1}^{\{\ell\}} \\ 
			&\iff \partial_s\left(\diag(e^{-i\ell \tilde{\tau}},e^{-i\ell \tilde{\tau}},e^{-3i\ell \tilde{\tau}}) \Pi_\omega^{\perp }U_{k+1}^{\{\ell\}}\right)
			 =\frac{i}{2\ell}\diag(e^{-i\ell \tilde{\tau}},e^{-i\ell \tilde{\tau}},e^{-3i\ell \tilde{\tau}}) f_{k+1}^{\{\ell\}} \\ 
			 &\iff 
			  \Pi_\omega^{\perp }U_{k+1}^{\{\ell\}} = \frac{i}{2\ell}\int_{0}^{s}\diag(e^{i\ell (\tilde{\tau}(s)-\tilde{\tau}(\sigma))},e^{i\ell (\tilde{\tau}(s)-\tilde{\tau}(\sigma))},e^{3i\ell (\tilde{\tau}(s)-\tilde{\tau}(\sigma))}) f_{k+1}^{\{\ell\}} \d \sigma
\end{align}
for all $0<|\ell| \leq k+2$, which solves \eqref{eq:U_1_second} at the non-zero modes.

Note that \eqref{Fourier_exp} does not contain the zero mode of the right hand side of \eqref{eq:U_1_second}. The requirement that the  zero mode of \eqref{eq:U_1_second} must vanish is what gives us an equation for the zero mode of $U_k$:

\begin{lemma}\label{lm:C0}Let the notations and assumptions be as in the first part of Lemma \ref{lm:U1} and denote by $A^{\{0\}}$ the zeroth Fourier mode of an expression $A$.
Then $C_k\coloneqq U_k^{\{0\}}$ solves the linear wave-type equation
\begin{equation}
	\begin{aligned}\label{eq:0-mode}
			\qquad  \partial_t^2&C_k-\Delta C_k+\nabla \div C_k  =
			%
			%
			%
			\chi^{(3)}H_k(\u{E}_j,\partial^\alpha_{t}U_\ell:j=0,\dots, k-1,\ell=0,\dots ,k-1 ,|\alpha|\leq 2)^{\{0\}},	
			\end{aligned}
\end{equation}
where each entry of $H_k$ is a polynomial in the entries of its arguments, and each term contains at least one factor of $ \partial^\alpha_{t}U_\ell$ for some $\alpha$ and $\ell$ as above.
 For $k=0,1$ the right hand side vanishes.
\end{lemma}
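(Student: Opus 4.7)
The plan is to substitute the ansatz \eqref{ansatz} into \eqref{eq:second_order_metric3}, expand in powers of $h$, and take the zeroth Fourier mode in $\theta$ of the equation at order $h^{k+3/2}$. For a $2\pi$-periodic profile $V(t,x,\theta)$ evaluated at $\theta=\phi/h$, the chain rule gives
\begin{equation*}
(\p_t^2-\Delta+\nabla\div)[V(\phi/h)] = (\p_t^2 V-\Delta V+\nabla\div V) - \tfrac{2}{h}(\p_t+\omega\cdot\nabla)\p_\theta V + \tfrac{1}{h}\bigl(\omega\,\p_\theta\div V + \nabla(\omega\cdot\p_\theta V)\bigr) + \tfrac{1}{h^2}\omega(\omega\cdot\p_\theta^2 V).
\end{equation*}
Applied to $h^{3/2}\sum_j h^j U_j(\phi/h)$ and collected at order $h^{k+3/2}$, the left-hand side receives $\p_t^2 U_k-\Delta U_k+\nabla\div U_k$ from $U_k$, a $\p_\theta$-term from $U_{k+1}$, and a $\p_\theta^2$-term from $U_{k+2}$. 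The stationary contribution $\curl\curl\u{E}_k$ vanishes since $\curl \u{E}_k=0$ by Lemma \ref{lm:stationary}. Every $\p_\theta$- or $\p_\theta^2$-term of a $2\pi$-periodic function has vanishing Fourier mean, so the $\{0\}$-mode of the left-hand side reduces to $\p_t^2 C_k-\Delta C_k+\nabla\div C_k$, which is the left-hand side of \eqref{eq:0-mode}.

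For the right-hand side I would expand
\begin{equation*}
|E_{\rm in}|^2 E_{\rm in} = h^{3/2}|\u{E}|^2\u{E} + h^{5/2}\bigl(|\u{E}|^2\mathcal{U} + 2(\u{E}\cdot\mathcal{U})\u{E}\bigr) + h^{7/2}\bigl(2(\u{E}\cdot\mathcal{U})\mathcal{U} + |\mathcal{U}|^2\u{E}\bigr) + h^{9/2}|\mathcal{U}|^2\mathcal{U},
\end{equation*}
where $\mathcal{U} \coloneqq \sum_j h^j U_j(\phi/h)$ and $\u{E} = \sum_i h^i\u{E}_i$. Since each $\u{E}_i$ is time-independent, $\p_t^2$ acts only on $U$-factors, and every chain-rule subterm (with an $h^{-1}$ or $h^{-2}$ prefactor) carries a $\p_\theta$, killing it after zero-mode extraction. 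Only the plain $\p_t^2$ pieces contribute. Matching the total power $h^{k+3/2}$ then forces, for the surviving terms: from the linear-in-$\mathcal{U}$ summand, $i+j=k-1$; from the quadratic, $i+j_1+j_2=k-2$; from the cubic, $j_1+j_2+j_3=k-3$. Hence every $U_\ell$-index appearing satisfies $\ell\leq k-1$, which gives the polynomial structure of $H_k$; each term contains at least one $\p_t^\alpha U_\ell$ factor because $\p_t^2(|\u{E}|^2\u{E})=0$ identically.

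For $k=0$ the bound $\ell\leq -1$ leaves $H_0$ empty, so the right-hand side vanishes trivially. For $k=1$, only the linear-in-$\mathcal{U}$ contribution survives (the quadratic and cubic would require $j_1+j_2\leq -1$ or $j_1+j_2+j_3\leq -2$, impossible), and it produces terms of the form $|\u{E}_0|^2\p_t^2 U_0$ and $(\u{E}_0\cdot\p_t^2 U_0)\u{E}_0$, whose $\{0\}$-modes are $|\u{E}_0|^2\p_t^2 C_0$ and $(\u{E}_0\cdot\p_t^2 C_0)\u{E}_0$. The $k=0$ case provides a homogeneous wave-type equation $\p_t^2 C_0-\Delta C_0+\nabla\div C_0=0$; since the $\cos$-type initial data in \eqref{IC1} together with the analysis of Section \ref{sub:initialization_of_the_problem_the_linear_solution} place all initial energy of $U_0$ into nonzero Fourier modes, one has $C_0|_{t=0}=\p_t C_0|_{t=0}=0$, giving $C_0\equiv 0$ and therefore $H_1^{\{0\}}=0$. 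The main obstacle I anticipate is the combinatorial bookkeeping ensuring that all surviving indices at order $h^{k+3/2}$ respect the bound $\ell\leq k-1$, and the verification that the zero Fourier mode of $U_0$'s initial data vanishes.
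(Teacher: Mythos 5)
Your proposal is correct and follows essentially the same route as the paper: substitute the ansatz, observe that every $\p_\theta$-bearing term (on the left-hand side and in the chain-rule expansion of $\p_t^2(F(t,x,\phi/h))$) has vanishing zeroth Fourier mode, and then do the index bookkeeping on the cubic term to conclude that only $\p_t^\alpha U_\ell$ with $\ell\le k-1$ survive, with the $k=0,1$ cases handled exactly as in the paper via $C_0\equiv 0$. The only cosmetic discrepancy is an off-by-one label (the stationary contribution at order $h^{k+3/2}$ is $\curl\curl\u{E}_{k+1}$, not $\curl\curl\u{E}_k$), which does not affect the argument.
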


We check that \eqref{eq:0-mode}  is solvable for $C_k$ with vanishing initial conditions, provided all the quantities in the source have been determined for $t\in [0,T]$ (namely, $E_j$ for $0\leq j\leq k$, $U_\ell$ for $0\leq \ell\leq k-1$): 
indeed, we can write it as 
\begin{equation}\label{eq:Ck_eq}
	\partial_t^2C_k-\Delta C_k+\nabla \div C_k  =F_k,
\end{equation}
where $F_k$ is known, with $\supp F_{k}\subset  [0,T]\times \supp \chi^{(3)}$.
Moreover, $F_k(t,\cdot )=0$ for $0\leq t\ll 1$, because $\supp \chi^{(3)}\cap \partial^\alpha_{(t,\theta)}U_\ell=0$ for such $t$.
Applying $\div$ to both sides results in $\partial_t^2\div C_k=\div F_k$, from which  $\div C_k=\int _0^{t}\int _{0}^{s}\div F_k(u,x)\d u\d t$, since $C_k\big|_{t=0}=\partial_t C_k\big|_{t=0}=0$.
Thus
\begin{equation}\label{Ck_reduced}
	\partial_t^2C_k-\Delta C_k=-\int _0^{t}\int _{0}^{s}\nabla\div F_k(u,x)\d u\d t  +F_k
\end{equation}
with vanishing initial conditions, which
can be solved for $C_k$ using Duhamel's formula.
Moreover,  the solution will be smooth and compactly supported in space for $t\in [0,T]$, because the source terms are.

After solving \eqref{Ck_reduced}, we need to check that $C_k$ indeed satisfies \eqref{eq:Ck_eq}. 
Apply $\partial_t^{2}\div $ to both sides of \eqref{Ck_reduced}: this yields
\begin{equation}
	(\partial_t^{2}-\Delta)(\partial_t^{2}\div C_k-\div F_k)=0,
\end{equation}
where $\partial_{t}^{2}\div C_k-\div F_k$ vanish for $0\leq t\ll 1$.
By standard uniqueness of solutions for the scalar wave equation, $\partial_{t}^{2}\div C_k-\div F_k=0$ for all time, so that $\div C_k=\int _0^{t}\int _{0}^{s}\div F_k(u,x)\d u\d t$. Plugging this into \eqref{Ck_reduced} we obtain \eqref{eq:Ck_eq}.

By last statement in Lemma \ref{lm:C0} combined with the discussion above, the first two zero harmonics $C_0$ and $C_1$ vanish.
It follows from \eqref{eq:F}, upon noticing that when $\ell=0,1$, any product of $\u E_j$, $\partial_t^2U_\ell$ involving exactly  one factor of the latter contributes nothing to the zero mode, that the zero harmonic $C_2$ at order $O(h^{7/2})$ solves 
\begin{equation}\label{eq:first_zero_harm}
\begin{aligned}
		 \partial_t^2C_2-\Delta C_2+\nabla \div C_2&=-\chi^{(3)}\Big(2|\partial_t U_0|^2\u E_0+2(U_0\cdot \partial_t^{2} U_0)\u E_0 \\
		  & \qquad +2 (\u E_{0}\cdot \partial_t^{2} U_0)U_0
		   +4(\u E_{0}\cdot \partial_t U_0)\partial_t U_0+2(\u E_{0}\cdot U_0)\partial_t^{2}U_0\Big)^{\{0\}},
\end{aligned}
\end{equation}
with vanishing initial conditions.
Observe that if the strong field $\underline{E}_0$ vanishes or if $\chi^{(3)}=0$,  then no zero harmonic $C_2$ is generated at this order.

\smallskip

Combining Lemmas \ref{lm:stationary}-\ref{lm:C0}, we see how to construct an asymptotic solution of \eqref{eq:second_order_metric3} for $t\in [0,T]$ up to order $O(h^{k+1/2})$, $k\geq 0$, subject to \eqref{IC1}:
first construct $\u{E}_j$ for $0\leq j\leq k+1$ with the help of Lemma \ref{lm:stationary}, the profiles of $U_{\mathrm{in}}$ for small positive time (before they interact with the nonlinearity) and $U_{\mathrm{out}}$ for $t\in [0,T]$, up to order $O(h^{k+1})$.
Then use Lemma \ref{lm:eikonal_transport} to construct $U_0$ up to the zero mode as described in Section \ref{sub:the_asymptotic_solution_the_leading_term}.
For the zero mode of $U_0$, as already explained, Lemma \ref{lm:C0} together with vanishing initial conditions implies $C_0=0$. 
Then solve  \eqref{eq:U_1_first}--\eqref{eq:U_1_second} for $k=0$ with the initial conditions coming from the $O(h)$ term of $U_{\mathrm{in}}$ at small positive time, to determine the components of $\partial_\theta U_1$, that is, $U_1$ up to its zero mode. 
Note here that \eqref{eq:U_1_second} is solvable with the process explained after Lemma \ref{lm:U1} because its zero mode vanishes, since $C_0=0$ satisfies \eqref{eq:Ck_eq}.
The zero mode of $U_1$ vanishes as well, as already explained.
Then proceed inductively using Lemmas \ref{lm:U1}--\ref{lm:C0} to construct $U_\ell$ for $\ell\leq k$ and $\partial_\theta U_{k+1}$.
This proves the following:

\begin{proposition}\label{prop:approx_sol}
	Let $k\geq 0$ and $T>{}0$. There exists a formal asymptotic solution of \eqref{eq:second_order_metric3}  up to order $O(h^{k+1/2})$ of the form $ E_{\rm in}+h^{3/2}U_{\rm out}$, with $ E_{\rm in}$ as in \eqref{ansatz} and $U_{\rm out}$ as in \eqref{Uout'}, defined on $ [0,T]\times \R^3$, subject to initial conditions \eqref{IC1}. Moreover, $\underline{E}_0$ is constant, and for each $j\geq 0$, $\underline{E}_{j+1}$ and $U_j$ are smooth and compactly supported in space.
\end{proposition}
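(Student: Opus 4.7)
The plan is to argue by strong induction on $k$, assembling the ingredients provided by Lemmas~\ref{lm:stationary}, \ref{lm:eikonal_transport}, \ref{lm:U1}, and \ref{lm:C0} in the order laid out in the paragraph preceding the proposition. First I would apply Lemma~\ref{lm:stationary} to produce the strong-field expansion $\underline E \sim \underline E_0 + h\underline E_1 + \cdots + h^{k+1}\underline E_{k+1}$ with $\underline E_0$ a constant vector perpendicular to $\omega$ and $\underline E_{j+1}\in C_0^\infty(\R^3)$. In parallel, for $0\le t\ll 1$ the linear construction of Section~\ref{sub:initialization_of_the_problem_the_linear_solution} supplies initial data for the profiles $U_j$ up to the required order, and it also yields $U_{\rm out}$ on all of $[0,T]\times\R^3$: since $U_{\rm out}$ propagates away from $\supp\chi^{(3)}$, it never encounters the nonlinearity, so the (inhomogeneous) linear transport equations derived from \eqref{eq:tr0} can be solved iteratively on $[0,T]$, producing smooth, spatially compactly supported amplitudes $a_{{\rm out},\pm}^{(j)}$ for $0\le j\le k$.

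For the base case $k=0$, Lemma~\ref{lm:eikonal_transport} identifies $\phi=-t+x\cdot\omega$ as characteristic and gives the profile equations \eqref{tE0}--\eqref{tEp}. The non-zero Fourier modes of $U_0$ reduce to $e^{\pm i\theta}$ by the shape of the initial data, and are integrated along characteristics via the diagonal system \eqref{eq:complex_sys} as in Section~\ref{sub:the_asymptotic_solution_the_leading_term}. The zero mode $C_0$ is determined by Lemma~\ref{lm:C0}, whose right-hand side vanishes, so with vanishing initial data $C_0\equiv 0$. Smoothness and compact $x$-support of $U_0$ on $[0,T]$ follow from the same properties of $U_{\rm init}$ and from the fact that the transport is along the characteristic vector field $\partial_t+\omega\cdot\nabla$.

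For the inductive step, I would assume that $U_0,\ldots,U_k$ have been constructed as smooth, spatially compactly supported profiles on $[0,T]\times \R^3\times S^1$, together with the required $\underline E_j$. To obtain $\partial_\theta U_{k+1}$, I combine Lemmas~\ref{lm:U1} and~\ref{lm:C0}. First, Lemma~\ref{lm:C0} determines $C_k=U_k^{\{0\}}$ (if this was not already forced to vanish at the previous step): the source in \eqref{eq:0-mode} is known, smooth and supported in $[0,T]\times \supp \chi^{(3)}$, vanishing for small $t$, so the reduction to \eqref{Ck_reduced} together with Duhamel's formula yields a smooth $C_k$ with compact spatial support on $[0,T]$. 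Having fixed $C_k$, the source of \eqref{eq:U_1_second} has vanishing zero Fourier mode, and the explicit procedure described after Lemma~\ref{lm:U1} solves \eqref{eq:U_1_first}--\eqref{eq:U_1_second} mode by mode: the $\omega$-component of $\partial_\theta U_{k+1}$ comes from the antiderivative in \eqref{eq:U_1_first} normalized to kill the zero mode, while each non-zero mode of $\Pi_\omega^\perp U_{k+1}$ is given by the closed-form integral obtained by diagonalizing $\mathcal{L}(\underline E_0)$ via \eqref{eq:diagonalization}. Smoothness and spatial compact support of $\partial_\theta U_{k+1}$ follow from the corresponding properties of $\chi^{(3)}$ and of the previously constructed profiles.

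The delicate point in the argument, and what I expect to be the main obstacle, is the mutual compatibility between Lemmas~\ref{lm:U1} and~\ref{lm:C0}: equation \eqref{eq:U_1_second} at stage $k+1$ is solvable only because its right-hand side has vanishing zero Fourier mode, and this in turn is precisely the wave-type condition \eqref{eq:0-mode} that defines $C_k$. One must verify along the induction that the $C_k$ produced by \eqref{eq:0-mode} feeds back consistently into the next source so that the compatibility continues to hold; this is implicit in the derivation of those lemmas but needs explicit checking. Combined with matching the profiles against the initial data generated near $t=0$ by Section~\ref{sub:initialization_of_the_problem_the_linear_solution}, and with Lemma~\ref{lm:U1}'s bound on the range of Fourier modes (so all objects remain trigonometric polynomials in $\theta$), this closes the induction and produces the asserted formal asymptotic solution of \eqref{eq:second_order_metric3} up to order $O(h^{k+1/2})$.
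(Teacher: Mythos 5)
Your proposal is correct and follows essentially the same route as the paper: an inductive assembly of Lemmas~\ref{lm:stationary}--\ref{lm:C0} together with the linear initialization of Section~\ref{sub:initialization_of_the_problem_the_linear_solution}, with the key compatibility point — that solvability of \eqref{eq:U_1_second} at the non-zero modes requires the vanishing of its zero Fourier mode, which is exactly the equation \eqref{eq:0-mode} determining $C_k$ — identified just as the paper does. No substantive difference from the paper's argument.
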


\begin{remark}
	The zeroth harmonics $h^{7/2 }C_2(t,x)+ h^{9/2}C_3(t,x)+\dots$ 
	can be considered as an effect of the ``beam''  on the stationary electric field $h^{1/2}\underline E$. Shining the beam through the strong stationary field affects the beam by a nonlinear interaction, but it also modifies $h^{1/2}\underline E$ by an $O(h^{7/2})$ term.
\end{remark}

\section{The strong electric field \texorpdfstring{$\underline{E}$}{E} }\label{eq:choosing_the_strong_electric_field}

While a pair of constant fields $(E,B)$ satisfies the Maxwell system without the divergence free conditions, it does not solve the latter, since in general $\div D=\div (\o{\chi}^{(3)}|E|^2E)\not=0$ for $E$ constant. 
In our ansatz \eqref{A1'} we included a time independent field $h^{1/2}\underline{E}(x,h)$, and we would like it, together with a time-independent magnetic field, to be a stationary solution of the Maxwell system. 

We start with a brief remark about the stationary solutions $(E_{(0)},H_{(0)})$ of \eqref{eq1a}--\eqref{eq4a}, \eqref{IC_EH}, and respectively of \eqref{eq:second_order_metric3}, \eqref{IC_wave}, each satisfying the corresponding divergence free conditions. The initial conditions for \eqref{eq1a}--\eqref{eq4a} are the fields  themselves at $t=0$; for \eqref{eq:second_order_metric3}, they are $(E_{(0)},E'_{(0)})$ with $E'_{(0)}$ determined by \eqref{IC_E2}, see Proposition~\ref{pr1}. The Maxwell system for $(E_{(0)},H_{(0)})$ is decoupled, and we described the stationary solutions $(0,H_{(0)})$ earlier: they are $H_{(0)}=\nabla\phi$ with $\Delta\phi=0$.  A similar argument, which we make more explicit below, describes the set of the possible $E_{(0)}$ as $E_{(0)} = \n\psi$ with $\psi$ solving a nonlinear elliptic PDE, see \eqref{eq:el} below, with $h=1$ there formally. 

By \eqref{eq2a} and \eqref{div0}, $h^{1/2}\underline{E}(x,h)$ must solve
\be{B1}
\curl h^{1/2}\underline{E}=0, \quad \div\left(h^{1/2}\underline{E}+ \o{\chi^{(3)}}h^{3/2}|\underline{E}|^2\underline{E}\right)=0.
\ee
The first equation suggests setting $\underline{E}=\nabla \psi$, where $\psi$ is not necessarily small at infinity (in fact, we would like it %
to satisfy an arbitrary regular enough boundary condition at the boundary of a large domain).  In the second equation, we eliminate the factor $h^{1/2}$ from both terms and reach 
\begin{equation}   \label{eq:el}
\div \left(\nabla\psi+ h\o{\chi} |\nabla\psi|^2\nabla\psi\right) =0.
\end{equation}
Here and for the rest of this section we write $\chi$ instead of $\chi^{(3)}$ to avoid overburdening the notation.
Equation \r{eq:el} is the Euler-Lagrange equation for the double-phase functional
\[
I\coloneqq  \int \Big(\frac12 |\nabla\phi|^2+ \frac14h\chi |\nabla\phi|^4  \Big)\d x,
\]
for which properties of minimizers have been extensively studied in the literature, see e.g., \cite{Lad-Ural}, and \cite{BaroniColomboMingione} and the references there.
Here we will take a direct approach (for which the smallness of $h$ is essential) and construct a sufficiently regular solution of \eqref{eq:el} in a large domain using the Banach fixed point theorem.

Let $M\subset \R^3$ be a large domain with smooth boundary  containing the support of $\chi$.
Also let $s>5/2$,  which guarantees that $H^{s-1}(M)$ is an algebra, see \cite{Grisvard, Behzadan}.
We seek a solution to \eqref{eq:el} with the boundary condition
\begin{equation}\label{eq:bd_cond_M}
\psi\big|_{\partial M}= f\in H^{{s-1/2}}(\partial M).
\end{equation}
Let $u_f\in H^{s}(M)\cap C^\infty(M)$ be the harmonic extension of $f$ to $M$, that is, the solution to
\begin{equation}
	\Delta u=0\text{ on }M, \quad u\big|_{\partial M}=f, 
\end{equation}
see \cite[Theorem~4.21]{McLean-book}.
With respect to the ${f}$ above, and for a fixed $\delta>0$, consider
\begin{equation}
	X_{f,\delta} \coloneqq \{u\in H^{s}(M):\|u-u_f\|_{s}\leq \delta \}.
\end{equation}
This is a  non-empty closed subset of a complete metric space, hence it is complete with the metric induced on it by the $H^s$ norm.
Define
\begin{equation}
	T_{f,\delta,h}: X_{f,\delta}\to H^{s}(M), \quad T_{f,\delta,h}(u)=u_f-h\Delta_D^{-1}(\div (\chi |\nabla u|^2\nabla u)),
\end{equation}
where the codomain property uses the fact that $H^{s-1}$ is an algebra.

\begin{lemma}\label{lm:contraction}Fix $s>5/2$. There exists $h_0>0$ (depending on $\chi$, $\delta$, $f$, $s$ and $M$) such that for $0<h\leq h_0$, $T_{f,\delta,h}: X_{f,\delta}\to X_{f,\delta}$ is a contraction.
\end{lemma}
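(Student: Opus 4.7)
The plan is to verify the two standard contraction-mapping hypotheses: that $T_{f,\delta,h}$ sends $X_{f,\delta}$ into itself, and that it is Lipschitz on $X_{f,\delta}$ with constant strictly less than one. Both estimates will rest on two ingredients already in the background of the paper: first, elliptic regularity for the Dirichlet Laplacian on the smooth domain $M$, giving $\Delta_D^{-1}:H^{s-2}(M)\to H^s(M)\cap H^1_0(M)$ boundedly; and second, the Banach algebra property of $H^{s-1}(M)$, which holds because $s-1>3/2$. Since $\chi\in C_0^\infty(\R^3)$ acts by pointwise multiplication, it only contributes a fixed $C(\chi,s,M)$ factor in every estimate.

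For the self-mapping property, fix $u\in X_{f,\delta}$. Then $\|u\|_s\le \|u_f\|_s+\delta =: C_{f,\delta}$, and the algebra estimate applied componentwise to $\nabla u\in H^{s-1}(M)^3$ gives $\|\chi|\nabla u|^2\nabla u\|_{H^{s-1}(M)}\le C_1 C_{f,\delta}^3$. Since $\div$ costs one derivative and $\Delta_D^{-1}$ returns two,
\[
\|T_{f,\delta,h}(u)-u_f\|_{H^s(M)} = h\,\|\Delta_D^{-1}\div(\chi|\nabla u|^2\nabla u)\|_{H^s(M)}\le C_2\, h\, C_{f,\delta}^3,
\]
where $C_2$ depends on $\chi$, $s$ and $M$. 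Choose $h_0$ small enough that $C_2 h_0 C_{f,\delta}^3\le \delta$; this guarantees $T_{f,\delta,h}(u)\in X_{f,\delta}$ for $0<h\le h_0$.

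For the contraction estimate, given $u,v\in X_{f,\delta}$, use the telescoping identity
\[
|\nabla u|^2\nabla u-|\nabla v|^2\nabla v = \bigl((\nabla u-\nabla v)\cdot(\nabla u+\nabla v)\bigr)\nabla u + |\nabla v|^2(\nabla u-\nabla v)
\]
and the $H^{s-1}$ algebra property to get
\[
\|\chi(|\nabla u|^2\nabla u-|\nabla v|^2\nabla v)\|_{H^{s-1}(M)}\le C_3\, C_{f,\delta}^2\,\|u-v\|_{H^s(M)}.
\]
Running this through $\div$ and $\Delta_D^{-1}$ as before yields
\[
\|T_{f,\delta,h}(u)-T_{f,\delta,h}(v)\|_{H^s(M)}\le C_4\, h\, C_{f,\delta}^2\,\|u-v\|_{H^s(M)}.
\]
Shrinking $h_0$ further so that $C_4 h_0 C_{f,\delta}^2< 1$ gives the contraction constant strictly less than one, and the final $h_0$ is the minimum of the two thresholds, depending only on $\chi,\delta,f,s,M$ as claimed.

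There is no real obstacle here; the only thing to watch is the Sobolev bookkeeping. The hypothesis $s>5/2$ is used precisely so that $s-1>3/2$, i.e., so that $H^{s-1}(M)$ is an algebra and cubic nonlinearities stay in $H^{s-1}$; the elliptic regularity $\Delta_D^{-1}:H^{s-2}\to H^s$ in turn requires the smoothness of $\partial M$ that we have assumed. The smallness of $h$ absorbs the cubic growth in $C_{f,\delta}$, which is why the fixed-point approach works in the weakly-nonlinear regime but does not yield a solution for arbitrary data at $h=1$.
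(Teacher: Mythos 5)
Your proposal is correct and follows essentially the same route as the paper: the algebra property of $H^{s-1}(M)$ for $s>5/2$ combined with the boundedness of $\Delta_D^{-1}\div:H^{s-1}\to H^s$ gives both the self-mapping bound and the Lipschitz bound, with $h$ absorbing the cubic constants. The only cosmetic difference is the factorization of the difference of cubic terms — you split it as $\bigl((\nabla u-\nabla v)\cdot(\nabla u+\nabla v)\bigr)\nabla u+|\nabla v|^2(\nabla u-\nabla v)$ while the paper uses the trilinear estimate on $\chi\langle\nabla\varphi_1,\nabla\varphi_2\rangle\nabla\varphi_3$ to produce the factor $\|u\|_s^2+\|u\|_s\|u'\|_s+\|u'\|_s^2$ — which yields the same contraction constant up to a harmless change in constants.
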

\begin{proof}
First observe that by the algebra property of $H^{s-1}(M)$  there exist constants $C_0,$ $C_1$ depending on $M$ and $s$ such that 
\begin{align}
	\left\|\Delta_D^{-1}\div\big(\chi \langle\nabla \varphi_1,\nabla\varphi_2\rangle \nabla \varphi_3\big) \right\|_{s}
	\leq &C_0\|\chi\|_{s-1}\|\nabla\varphi_1\|_{s-1}\|\nabla\varphi_2\|_{s-1}\|\nabla\varphi_3\|_{s-1}\label{eq:C0}\\
		\leq & C_1\|\chi\|_{s-1}\|\varphi_1\|_s\|\varphi_2\|_s\|\varphi_3\|_s,\quad \varphi_j\in H^s(M).\label{eq:C1}
\end{align}
In particular, if $h_0$ is such that
\begin{equation}
	C_1h_0\|\chi\|_{s-1}\big(\|u_f\|_s+\delta\big)^{3}\leq \delta,
\end{equation}
then for $0<h\leq h_0$ we have $\| T_{f,\delta,h}(u)-u_f\|_s\leq \delta  $ for all $ u\in X_{f,\delta}$, which implies $T_{f,\delta,h}: X_{f,\delta}\to X_{f,\delta}$.

Shrinking $h_0$ further if necessary to ensure that
\begin{equation}\label{eq:h_0}
	3C_1h_0\|\chi\|_{s-1}\big(\|u_f\|_s+\delta\big)^{2}\leq  1/2,
\end{equation}
we have, for $u$, $u'\in X_{f,\delta}$ and $0<h\leq h_0$,
\begin{equation}\label{eq:contraction_half}
	\begin{aligned}
	\|T_{f,\delta,h}(u)	-T_{f,\delta,h}(u')\|_s	&\overset{\eqref{eq:C1}}{\leq} hC_1\|\chi\|_{s-1}\left(\|u\|_{s}^2
	 +\| u'\|_{s}\|u\|_{s}+\| u'\|_{s}^2\right)	\|  u'- u\|_{s}\\
	 &\quad \leq 3 hC_1\|\chi\|_{s-1}(\|u_f\|_{s}+\delta)^2	\|  u'- u\|_{s}
	 \leq \frac{1}{2} \| u'-u\|_s, 
\end{aligned}
\end{equation}
that is, $T_{f,\delta,h}$ is a contraction.
\end{proof}

Using Lemma \ref{lm:contraction} we obtain existence and uniqueness of solutions to \eqref{eq:el}--\eqref{eq:bd_cond_M} in $X_{f,\delta}$, as well as an asymptotic expansion in $h$:

\begin{theorem}\label{thm:exp_psi}
	Fix $s>5/2$ and $\delta>0$. There exists $h_0>0$, depending on $M$, $\chi$, $\delta$, $f$, and $s$, such that for each $0<h\leq h_0$ there exists a unique solution $\psi(\cdot,h) \in X_{f,\delta}$ to \eqref{eq:el} with the boundary condition \eqref{eq:bd_cond_M}.
	Moreover,  $\psi$ admits an asymptotic expansion in $h$ in the sense that for any fixed $n\geq 0$ there exist $\psi^{(j)}\in H^{s}(M)$, $0\leq j\leq n $, and $R_{n+1}:(0,h_0]\to H^s(M)$  such that 
	\begin{equation}\label{eq:psi_as}
	\psi = \sum _{j=0}^n h^j\psi^{(j)}+R_{n+1}(h), \quad R_{n+1}(h)= O_{H^s(M)}(h^{n+1}), \quad \text{for all }0<h\leq h_0.
\end{equation}
One has $\psi^{(0)}=u_f$, $\psi^{(1)}=-\Delta_D^{-1}(\div (\chi |\nabla u_f|^2\nabla u_f))$. 
If $f\in C^{\infty}(\partial M)$, then $\psi^{(j)}\in C^{\infty}(\overline M)$ for all $0\leq j\leq n$.
\end{theorem}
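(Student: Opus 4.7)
The argument splits cleanly into existence/uniqueness and the derivation of the expansion. For the first part, Lemma~\ref{lm:contraction} provides, for $0<h\le h_0$, a contraction $T_{f,\delta,h}$ on the complete metric space $X_{f,\delta}$; the Banach fixed point theorem then produces a unique $\psi=\psi(\cdot,h)\in X_{f,\delta}$ satisfying
\begin{equation*}
\psi=u_f-h\Delta_D^{-1}\div(\chi |\nabla\psi|^2\nabla\psi).
\end{equation*}
Applying $\Delta$ and using $\Delta u_f=0$ recovers \eqref{eq:el}; the boundary condition \eqref{eq:bd_cond_M} follows because $\Delta_D^{-1}$ maps into $H_0^1(M)$ and $u_f|_{\partial M}=f$.

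To construct the $\psi^{(j)}$, I would formally insert the ansatz $\psi\sim\sum_{j\ge 0}h^j\psi^{(j)}$ into \eqref{eq:el}, expand the cubic term $|\nabla\psi|^2\nabla\psi$ as a polynomial in $\{\nabla\psi^{(k)}\}$, and match powers of $h$. The order $h^0$ equation is $\Delta\psi^{(0)}=0$, $\psi^{(0)}|_{\partial M}=f$, so $\psi^{(0)}=u_f$. At order $h^{j+1}$ one obtains a Dirichlet problem
\begin{equation*}
\Delta\psi^{(j+1)}=-\div\bigl(\chi\, Q_j(\nabla\psi^{(0)},\ldots,\nabla\psi^{(j)})\bigr),\qquad \psi^{(j+1)}|_{\partial M}=0,
\end{equation*}
where $Q_j$ is the coefficient of $h^j$ in the formal expansion of $|\nabla\psi|^2\nabla\psi$ and is a polynomial expression in the previously constructed coefficients. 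Each such problem is uniquely solvable in $H^s(M)$ by the algebra property of $H^{s-1}$ (which places $Q_j$ in $H^{s-1}$) together with elliptic regularity for the Dirichlet Laplacian on the smooth domain $M$; in particular $Q_0=|\nabla u_f|^2\nabla u_f$, matching the stated formula for $\psi^{(1)}$. If $f\in C^\infty(\partial M)$, elliptic regularity up to the smooth boundary gives $u_f\in C^\infty(\overline M)$, and an induction using that $Q_j\in C^\infty(\overline M)$ implies $\psi^{(j+1)}\in C^\infty(\overline M)$.

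For the remainder estimate, set $R_{n+1}(h)\coloneqq \psi-\sum_{j=0}^n h^j\psi^{(j)}$ and substitute $\psi = \sum_{j=0}^n h^j\psi^{(j)}+R_{n+1}(h)$ into the fixed point equation. The construction of the $\psi^{(j)}$'s ensures that the contributions of order $h^j$ for $0\le j\le n$ coming from the asymptotic part cancel, leaving a relation of the schematic form
\begin{equation*}
R_{n+1}(h)=h\,\Delta_D^{-1}\div\bigl(\chi\,\mathcal N_h(R_{n+1}(h))\bigr)+h^{n+1}S_n(h),
\end{equation*}
where $\mathcal N_h$ is polynomial in its argument with coefficients involving $\sum_j h^j\psi^{(j)}$, and $S_n(h)$ is bounded in $H^s(M)$ uniformly in $h\in(0,h_0]$. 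Applying the same algebra estimates \eqref{eq:C0}--\eqref{eq:C1} that underlie Lemma~\ref{lm:contraction}, and shrinking $h_0$ if necessary so that the map $R\mapsto h\,\Delta_D^{-1}\div(\chi\,\mathcal N_h(R))+h^{n+1}S_n(h)$ is a contraction as in \eqref{eq:contraction_half}, its unique fixed point $R_{n+1}(h)$ satisfies $\|R_{n+1}(h)\|_s\le 2\|h^{n+1}S_n(h)\|_s=O(h^{n+1})$. The main bookkeeping hurdle I anticipate is keeping track of the polynomial expansion of $|\nabla\psi|^2\nabla\psi$ around $\sum_j h^j\nabla\psi^{(j)}$ carefully enough to identify both the cancellations at orders $h^0,\ldots,h^n$ and the uniform $O(h^{n+1})$ bound on the residual source; beyond that the argument is a direct iteration of the contraction scheme of Lemma~\ref{lm:contraction}.
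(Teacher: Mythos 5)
Your proposal is correct, but the route you take for the asymptotic expansion differs from the paper's. You construct the coefficients $\psi^{(j)}$ by inserting a formal power series into \eqref{eq:el} and solving the resulting hierarchy of linear Dirichlet problems $\Delta\psi^{(j+1)}=-\div(\chi Q_j)$, and then you bound the remainder by running a second contraction argument for the equation satisfied by $R_{n+1}$. The paper instead extracts the entire expansion from the Picard iteration itself: it observes that each iterate $\psi_k=T_{f,\delta,h}\psi_{k-1}$ is a \emph{polynomial} in $h$ of degree $d_k\ge k$, so its first $n+1$ coefficients furnish the $\psi^{(j)}$, while the geometric convergence rate $\|\psi-\psi_n\|_s\lesssim h^{n+1}$ from \eqref{eq:n+2-n+1} gives the remainder bound with no extra work; $\psi^{(0)}$ and $\psi^{(1)}$ are read off from $\psi_1$, and smoothness follows by applying the mapping property of $\Delta_D^{-1}$ to the iterates. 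Your approach costs more bookkeeping (the expansion of $|\nabla\psi|^2\nabla\psi$ and the residual source $S_n$) but buys an explicit PDE characterization of every $\psi^{(j)}$ and would survive in settings where the iterates are not polynomial in $h$; the paper's approach is shorter and makes the $O(h^{n+1})$ bound automatic. One point you should make explicit in your remainder step: the quantity you need to bound is $R_{n+1}=\psi-\sum_{j\le n}h^j\psi^{(j)}$, which is a priori only of size $O(\delta)$, not $O(h^{n+1})$, so you must verify that the contraction (and hence uniqueness) for the $R$-equation holds on a ball of radius comparable to $\delta$ containing it — this follows from the same estimate as \eqref{eq:contraction_half} together with $\mathcal N_h(0)=0$, which yields $\|R_{n+1}\|_s\le \tfrac12\|R_{n+1}\|_s+h^{n+1}\|S_n\|_s$ directly — but as written your argument only invokes uniqueness of a small fixed point.
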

\begin{proof}

By the Banach fixed point theorem, and with $h_0$ as in Lemma \ref{lm:contraction}, for each $0<h\leq h_0$, $T_{f,\delta,h}$ admits  a unique fixed point $\psi\in X_{f,\delta} $. 
Since such a $\psi$ is a fixed point of $T_{f,\delta,h}$ if and only if it solves \eqref{eq:el}--\eqref{eq:bd_cond_M}, we conclude that there exists a unique solution to \eqref{eq:el}--\eqref{eq:bd_cond_M} in $X_{f,\delta}$.

To show the statement regarding the asymptotic expansion, fix $n\geq 0$.
Again by the Banach fixed point theorem, the solution $\psi$ arises as the limit of the convergent sequence
\begin{equation}
		\psi_0=u_f,\qquad \psi_{k+1}:=T_{f,\delta,h}\psi_k = u_f-h\Delta_D^{-1}(\div (\chi |\nabla \psi_k|^2\nabla \psi_k)), \quad k\geq 0,
\end{equation}
which can be written as 
\begin{equation}
		\psi = \lim_{n\to \infty} \psi_n=\psi_0+\lim _{n\to \infty}\sum _{k=0}^{n-1}(\psi_{k+1}-\psi_k).
\end{equation}
Using \eqref{eq:contraction_half} for $u=\psi_{k}$, $u'=\psi_{k-1}$ we can see that for every $k\geq 1$, 
\begin{equation}\label{eq:n+2-n+1}
\begin{aligned}
	\|\psi_{k+1}-\psi_{k}\|_s{\leq} & 3C_1h\|\chi\|_{s-1}\big(\|u_f\|_s+\delta\big)^2\|\psi_{k}-\psi_{k-1}\|_{s}\\ 
	{\leq} & h^{k}\Big(3C_1\|\chi\|_{s-1}\big(\|u_f\|_s+\delta\big)^2\Big)^{k}\|\psi_{1}-\psi_0\|_{s}.
\end{aligned}
\end{equation}
Using this and the estimate $\| \psi_1-\psi_0\|_s\leq hC_1\|\chi\|_{s-1}\|\psi_0\|_{s}^{3}$, which follows by \eqref{eq:C1}, we see that
\begin{equation}\label{eq:estimate_psi}
	\begin{aligned}
	\|\psi-\psi_n\|_s\leq \sum _{k=n}^\infty\|\psi_{k+1}-\psi_k\|_s\overset{\eqref{eq:n+2-n+1}}{\leq }\sum _{k=n}^\infty  h^{k}\big(3C_1\|\chi\|_{s-1}\big(\|u_f\|_s+\delta\big)^2\big)^{k}\|\psi_{1}-\psi_0\|_{s}\\
	\leq
	h^{n+1}C_1\|\chi\|_{s-1}\|\psi_0\|^{3}\sum _{m=0}^\infty \big(3C_1\|\chi\|_{s-1}\big(\|u_f\|_s+\delta\big)^2\big)^{m+n}h_{0}^{m},
\end{aligned}	
\end{equation}
where the infinite sum converges by \eqref{eq:h_0}. 
Substituting inductively $\psi_k$ for $k<n$ in $\psi_n$, we see that $\psi_n$ is a polynomial of degree $d_n$ in $h$, where $d_n$ is given  recursively as $d_n=3d_{n-1}+1$, $d_0=0$.
In particular, since $d_n\geq n$, for $0\leq j\leq n$ there exist $h$-independent $\psi^{(j)}\in H^{s}(M)$ such that 
\begin{equation}
	\psi_n = \sum _{j=0}^{n} h^j\psi^{(j)}+O_{H^s(M)}(h^{n+1}).
\end{equation}
Combined with \eqref{eq:estimate_psi}, this shows \eqref{eq:psi_as}. 
The statement about $\psi^{(0)}$ and $\psi^{(1)}$ can be read off from $\psi^{1}$.

Finally, if $f$ is smooth, then $u_f\in C^\infty(\overline{M})$, so from the mapping property of $(\Delta_D)^{-1}:H^{s-2}\to H^s$ it follows that $\psi_k\in C^{\infty}(\overline{M})$ for all $k\geq 0$. Thus this is also true for the $\psi^{(k)}$. 
\end{proof}

\smallskip

We can proceed to the following proof now. 

\begin{proof}[Proof of Theorem \ref{thm_main}\ref{item:a} and Lemma \ref{lm:stationary}]
Applying Theorem~\ref{thm:exp_psi} with $M=B(0,R_0)$ and setting $\underline{E}=\nabla \psi$  proves Theorem~\ref{thm_main}\ref{item:a}, except for uniqueness.
For that, if  $E\in H^{s-1}(M)$ as in the statement of the theorem satisfies $\curl E=0$, a low regularity version of the Poincaré lemma (e.g., \cite[Theorem 8.3]{Csato2012}) implies the existence of $\psi\in H^{s}(M)$ such that $E= \nabla \psi $.
This $\psi$ satisfies \eqref{eq:el} and $\psi-f\big|_{\partial M}=a=\textrm{const}$. 
Using \eqref{eq:C0},
\[
		 \|\psi-u_f-a \|_s =\| h\Delta_D^{-1}(\div (\chi |\nabla \psi|^2\nabla \psi))\|_s{\leq }C_0h\|\chi\|_{s-1}\|\nabla\psi\|_{s-1}^{3}\\
		\leq C_0h\|\chi\|_{s-1}(\|\nabla{u_f}\|_{s-1}+\epsilon)^{3}.		%
 \]
Taking $E_j=\nabla \psi_j$, $j=1,2$ with the properties above we see that if $h$ is sufficiently small, then $\psi_j-a_j\in X_{f,\delta}$ and solve \eqref{eq:el}, \eqref{eq:bd_cond_M}, so they must be equal, that is, $\psi_1-a_1=\psi_2-a_2\implies E_1=E_2$.

To prove Lemma \ref{lm:stationary}, use Theorem~\ref{thm:exp_psi} again, with $f= a x_3$, where $a$ is constant.
We will extend $\psi$ suitably to $\R^3$. 
Fix $n\geq 0$; note that for $0\leq j\leq n$ we have  $\psi^{(j)}\in C^\infty(\overline M)$ in \eqref{eq:psi_as}.
Denote by $D$ a bounded domain with $\overline{M} \subset D$,  extend $\psi^{(0)}$ as $\tilde{\psi}^{(0)}=ax_3$ on $\R^{3}$, and each $\psi^{(j)}$ to a smooth function $\tilde{\psi}^{(j)}\in C_0^\infty(D)$.
As for the error term, by \eqref{eq:psi_as} there exists a bounded function $(0,h_0]\to H^{s}(M)$, $h\mapsto h^{-n-1}R_{n+1}(h)$.  
Using standard extension operators for Sobolev functions on smooth, bounded domains and a suitable cutoff, there exists a bounded operator $\mathcal{E}:H^{s}(M)\to H^{s}_{\overline{D}}(\R^3)$, therefore upon setting $\tilde{R}_{n+1}=h^{n+1}\mathcal{E}(h^{-n-1}R_{n+1}(h))$ we obtain a map $\tilde{R}_{n+1}:(0,h_0]\to H^{s}_{\overline{D}}(\R^3)$  with the property $\tilde{R}_{n+1}=O_{H^{s}(\R^3)}(h^{n+1})$.
Letting $\tilde{\psi}= \sum _{j=0}^n h^j\tilde{\psi}^{(j)}+\tilde{R}_{n+1}(h)$, we obtain a function in $H^{s}(\R^3)$ that solves \eqref{eq:el} on $M\cup \overline{D}^c$.
Now set
\begin{equation}
	h^{1/2}\underline{E}(\cdot ,h)=h^{1/2}\nabla\psi\in H^{s-1}_{\mathrm{loc}}(\R^n),
\end{equation}
which satisfies $\curl \u E=0$ on $\R3$, the divergence free condition \eqref{div0}  in $M\cup \overline{D}^c$,
 and has an asymptotic expansion in $h$, in the sense that for every $n\geq 0$,
\begin{equation}\label{eq:E_error}
	h^{1/2}\underline{E}(x,h)-h^{1/2}\sum_{j=0}^nh^j\u{E}_j=O_{H^{s-1}(\R^3)}(h^{n+3/2}), \quad \u{E}_j\coloneqq \nabla \tilde{\psi}^{(j)}.
\end{equation}
Finally, since the $O(1)$  term of $\u{E}$ is bounded, the next $n-1$ are in $C_0^\infty$, and the error is in $H^{s-1}(\R^3)\subset L^\infty(\R^3)$ (by Sobolev Embedding), we see that $\u{E}\in L^\infty(\R^3)$.
\end{proof}

\section{The Profile Equations -- Proofs of Lemmas \ref{lm:eikonal_transport}-\ref{lm:C0}}\label{sec:the_profile_equations_proofs_of_lemmas_ref_lm_eikonal_transport}

In this section we will prove Lemmas \ref{lm:eikonal_transport}-\ref{lm:C0} by substituting  \eqref{ansatz} into \eqref{eq:second_order_metric3}.
This is the general approach for the construction of asymptotic solutions of symmetric hyperbolic systems (see e.g., \cite{Metivier-Notes}), adapted to a second order system.
For the duration of this section we write $E$ instead of $ E_{\rm in}$ to avoid over-cluttering the notation.

\begin{proof}[Proof of Lemma \ref{lm:eikonal_transport}]

Substitute \eqref{ansatz} into \eqref{eq:second_order_metric3}.
The leading order of the left hand side of \eqref{eq:second_order_metric3} is $O(h^{-1/2})$.
By
 \begin{equation}   \label{Ett}
\partial_t^2 (| E|^2 E) = 2|\partial_t E|^2 E+ 2 ( E\cdot   \partial_t^2 E)  E +4( E\cdot   \partial_t E) \partial_t E 
+ | E|^2 \partial_t^2 E,
\end{equation}
the right hand side does not contribute to order $O(h^{-1/2})$, since
the terms there are of order $h^{3/2}$, $ h^{1/2}$, $h^{3/2}$, $ h^{1/2}$, respectively. 
Thus the coefficient of $h^{-1/2}$ is only determined by the left hand side, and it must vanish; we obtain
\begin{equation}
	((\partial_t \phi)^2-|\nabla \phi|^2 )\partial_\theta^2U_0+(\nabla \phi\cdot \partial_\theta^2U_0)\nabla \phi =0.
\end{equation}
It follows that $\phi =- t+\omega\cdot x$ is a characteristic phase of multiplicity 2, in the sense that it satisfies
\begin{equation}
	\det \left( ((\partial_t \phi)^2-|\nabla \phi|^2 )\Id_3+\nabla \phi\otimes \nabla \phi\right)=0,
\end{equation}
and for every $(t,x)$, the dimension of the nullspace of $((\partial_t \phi)^2-|\nabla \phi|^2 )\Id_3+\nabla \phi\otimes \nabla \phi$ is 2 (and given by $\omega^{\perp}$). 
Then  $U_0$ satisfies the polarization condition 
\begin{equation}
	\partial_\theta^{2}U_0\in \ker \left( ((\partial_t \phi)^2-|\nabla \phi|^2 )\Id_3+\nabla \phi\otimes \nabla \phi\right)\iff 
	\partial_\theta^{2}U_0\cdot \omega=0.\label{eq:polarization}
\end{equation}
Since $U_0$ is assumed to be periodic in $\theta$, this condition is equivalent to $ \partial_\theta U_0\cdot \omega=0$, i.e., \eqref{tE0}.
This also proves the last statement, that is, if \eqref{tE0} holds then \eqref{eq:second_order_metric3} is true to order $O(h^{-1/2})$.

At order $h^{1/2}$, the right hand side enters and we obtain
\begin{equation}   \label{tE'}
\begin{aligned}
	-2(\partial_t + \omega\cdot{\nabla})& \p_\theta U_0+ \chi^{(3)}  |\u{E}_0|^2 \p_{\theta}^2U_0 + 2\chi^{(3)}   (\u{E}_0\cdot \p_{\theta}^2U_0 )\u{E}_0 \\
	&+(\div \partial_\theta U_0)\omega+{\nabla} (\partial_\theta U_0(t,x,\theta)\cdot \omega )+(\partial_\theta^2U_1\cdot \omega)\omega=0.
\end{aligned}
\end{equation}
Notice that \eqref{tE'} is a linear equation for $\partial_\theta U_0$.
It implies in particular that the first five terms must be in the range of the last, which is spanned by $\omega$.
That is, they must be annihilated by $\Pi_\omega^\perp v =v-(v\cdot \omega)\omega$, the projection on $\omega^{\perp}$.
Applying $\Pi_\omega^\perp$ to \eqref{tE'} we obtain
\begin{equation}
	-2(\partial_t + \omega\cdot{\nabla}) \Pi_\omega^\perp\p_\theta U_0+ \chi^{(3)}  |\u{E}_0|^2 \Pi_\omega^\perp \p_{\theta}^2U_0 + 2\chi^{(3)}   (\u{E}_0\cdot \p_{\theta}^2U_0 )\Pi_\omega^\perp \u{E}_0 
	+\Pi_\omega^\perp{\nabla} (\partial_\theta U_0(t,x,\theta)\cdot \omega )=0.
\end{equation} 
Using that $\omega\cdot \u{E}_0=0$ and \eqref{eq:polarization}, we reach
\begin{equation}
	-2(\partial_t + \omega\cdot{\nabla}) (\Pi_\omega^\perp\p_\theta U_0)+ \chi^{(3)} |\u{E}_0|^2 \partial_\theta (\Pi_\omega^\perp \p_{\theta}U_0) + 2\chi^{(3)}   (\u{E}_0\cdot \partial_\theta (\Pi_\omega^\perp \p_{\theta} U_0) ) \u{E}_0 =0,
\end{equation}
i.e., \eqref{tEp}.
\end{proof}

\begin{proof}[Proof of Lemma \ref{lm:U1}.]
Recall our assumption that $\partial_\theta U_0\cdot \omega=0$, $\u{E}_0$ is constant with $\u{E}_0\cdot \omega =0$, and  $ \phi=-t+\omega\cdot x$.
Also note that, by  Lemma \ref{lm:stationary}, $-\Delta \u{E}_{k+1}+\nabla \div \u E_{k+1}=\curl \curl \u E_{k+1}=0$ for all $k\geq 0$.
We show \eqref{eq:U_1_second}.
Write the linear part of \eqref{eq:second_order_metric3} as 
\begin{equation}\label{eq:linear_part}
	LE\coloneqq \partial _t^2 E-\Delta E+\nabla \div E .
\end{equation}
Let  $E$ be as in \eqref{ansatz}. 
 We find
\begin{equation}   \label{tE''}
\begin{aligned}
	LE&\sim h^{-1/2}(\partial_\theta^2U_{0}\cdot \omega)\omega\\
	&\quad {}+h^{1/2}\Big( -2(\partial_t + \omega\cdot{\nabla}) \p_\theta U_0
	+( \div \partial_\theta U_0)\omega+{\nabla}( \partial_\theta U_0(t,x,\theta)\cdot \omega)+(\partial_\theta^{2}U_1\cdot \omega)\omega\Big)   \\
	&  \quad \quad +\sum_{k\geq 0} h^{3/2+k}\Big(-2(\partial_t + \omega\cdot{\nabla}) \p_\theta U_{k+1} +\partial_t^2U_{k}-\Delta U_{k} +\nabla \div U_{k}\\
	&\qquad\qquad\qquad+( \div\partial_\theta U_{k+1})\omega+{\nabla} (\partial_\theta U_{k+1}(t,x,\theta)\cdot \omega)+(\partial_\theta^2U_{k+2}\cdot \omega)\omega\Big).%
\end{aligned}
\end{equation}
Write the right hand side of \eqref{eq:second_order_metric3} as
\begin{equation}\label{eq:forms}
\begin{aligned}
		 -2\chi^{(3)}|\p_tE|^2E- 2\chi^{(3)} (E\cdot &  \p_t^2E) E -4\chi^{(3)}(E\cdot   \p_tE) \p_tE 
- \chi^{(3)}|E|^2 \p_t^2E\\ 
=&\sum_{j=1}^2\omega^{[1]}_j(\partial_t^2 E,E,E)+\sum_{j=1}^2\omega_j^{[2]}(\partial_t E,\partial_t E, E) ,
\end{aligned}	 
\end{equation}
where $\omega_j^{[\ell]}$ are  $\R^3$-valued tensors. 
We have, with $\phi=-t+\omega\cdot x$,
\begin{equation}\label{eq:E_der}
\begin{aligned}
		\partial_tE
		\sim &-h^{1/2}\partial_\theta U_0 + \sum_{k\geq 0} h^{3/2+k}\left(-\partial_\theta U_{k+1}  +\partial_t U_k\right),\\
		\partial_t^2E 
		\sim &h^{-1/2}\partial_\theta^{2} U_0 +h^{1/2}\left( -2\partial_{t\theta}^2 U_0  +\partial_\theta^{2}U_{1} \right)
		+\sum _{k\geq 0} h^{3/2+k}\left( -2\partial_{t\theta}^2 U_{k+1}  
		+\partial_t^2U_k+\partial_\theta^{2}U_{k+2}  \right).
\end{aligned}
\end{equation}
Denote by $A^{(\lambda)}$ the coefficient of $h^{\lambda}$  in the expansion of an expression $A$.
Equating \eqref{tE''} and \eqref{eq:forms} at order $h^{3/2+k}$, $0\leq k\leq K$, 
\begin{equation}\label{eq:O(h3/2+k)}
	\begin{aligned}
			-2(\partial_t &+ \omega\cdot{\nabla}) \p_\theta U_{k+1} +\partial_t^2U_{k}-\Delta U_{k} +\nabla \div U_{k}\\*
			&+( \div\partial_\theta U_{k+1})\omega+{\nabla} (\partial_\theta U_{k+1}(t,x,\theta)\cdot \omega)+(\partial_\theta^2U_{k+2}\cdot \omega)\omega\\*
			&\quad =\sum_{j=1}^2\sum _{s=-1}^{k}\sum_{t=0}^{k-s}\omega^{[1]}_j\left((\partial_t^2 E)^{\left(s+\frac{1}{2}\right)},E^{\left(t+\frac{1}{2}\right)},E^{\left(k-s-t+\frac{1}{2}\right)}\right)\\*
			&\qquad + \sum_{{j=1}}^2\sum _{s=0}^{k}\sum_{t=0}^{k-s}\omega^{[2]}_j\left((\partial_t E)^{\left(s+\frac{1}{2}\right)},(\p_tE)^{\left(t+\frac{1}{2}\right)},E^{\left(k-s-t+\frac{1}{2}\right)}\right).
	\end{aligned}
\end{equation}

We now argue that $U_{\ell}$ with $\ell\geq k+2$ does not appear in the right hand side of \eqref{eq:O(h3/2+k)}, and $U_{k+1}$ only appears linearly.
From \eqref{ansatz} and \eqref{eq:E_der} follows that $U_{k+2}$ and derivatives of it only appear in $(\partial_t^{n}E)^{(7/2+k-l)}$ for $l\leq n$ and $0\leq n\leq 2$.
It is now easy to check that it cannot appear in the right hand side of \eqref{eq:O(h3/2+k)} due to the ranges of the summations.
Similarly, since $U_{k+1}$ appears in 
$(\partial_t^{n}E)^{(5/2+k-l)}$ for $l\leq n$ and $0\leq n\leq 2$,
it only shows up in the right hand side of \eqref{eq:O(h3/2+k)} in the terms $\omega^{[1]}_j\!\big((\partial_t^2 E)^{\left(k+\frac{1}{2}\right)},E^{\left(1/2\right)},E^{\left(1/2\right)}\big)$.
We can isolate the $s=k$ term in the first summations of \eqref{eq:O(h3/2+k)} and rewrite the latter in the form 
	\begin{align} 
			 -2(\partial_t &+ \omega\cdot{\nabla}) \p_\theta U_{k+1} +\partial_t^2U_{k}-\Delta U_{k} \\*
			&+\nabla \div U_{k}+( \div\partial_\theta U_{k+1})\omega+{\nabla}( \partial_\theta U_{k+1}(t,x,\theta)\cdot \omega)+(\partial_\theta^2U_{k+2}\cdot \omega)\omega\\*
			&\quad=-\chi^{(3)} |\u{E}_0|^2(\partial_\theta^2 U_{k+1})-2\chi^{(3)} (\u{E}_0\cdot \partial_\theta^2 U_{k+1}) \u{E}_0 \label{eq:big_eq_2}\\*
			&\quad-\chi^{(3)} |\u{E}_0|^2(-2\partial_{t\theta}^2U_k+\partial_t^2U_{k-1})-2\chi^{(3)} (\u{E}_0\cdot (-2\partial_{t\theta}^2U_k+\partial_t^2U_{k-1})) \u{E}_0\\*
			&\qquad +\sum_{j=1}^2\sum _{s=-1}^{k-1}\sum_{t=0}^{k-s}\omega^{[1]}_j\left((\partial_t^2 E)^{\left(s+\frac{1}{2}\right)},E^{\left(t+\frac{1}{2}\right)},E^{\left(k-s-t+\frac{1}{2}\right)}\right)\\*
			&\qquad\quad + \sum_{j=1}^2\sum _{s=0}^{k}\sum_{t=0}^{k-s}\omega^{[2]}_j\left((\partial_t E)^{\left(s+\frac{1}{2}\right)},(\p_tE)^{\left(t+\frac{1}{2}\right)},E^{\left(k-s-t+\frac{1}{2}\right)}\right),			
	\end{align}
where for convenience we set $U_m=0$ if $m<0$.
Apply $\Pi_\omega^{\perp}$  to find 
	\begin{align}
			 -2(&\partial_t + \omega\cdot{\nabla}) \p_\theta \Pi_\omega^{\perp}U_{k+1} +\Pi_\omega^{\perp}(\nabla( \partial_\theta U_{k+1}(t,x,\theta)\cdot \omega))\\*
			 \quad&+\chi^{(3)} |\u{E}_0|^2(\partial_\theta^2 \Pi_\omega^{\perp}U_{k+1})+2\chi^{(3)} (\u{E}_0\cdot \partial_\theta^2 \Pi_\omega^{\perp} U_{k+1}) \u{E}_0+\Pi_\omega^{\perp}(\nabla \div U_{k}+\partial_t^2U_{k}-\Delta U_{k} )\\*
			&\label{eq:perp_part}\quad=
			-\chi^{(3)} |\u{E}_0|^2\Pi_\omega^{\perp}(-2\partial_{t\theta}^2U_k+\partial_t^2U_{k-1})-2\chi^{(3)} (\u{E}_0\cdot (-2\partial_{t\theta}^2U_k+\partial_t^2U_{k-1})) \u{E}_0\\*
			&\qquad +\sum_{j=1}^2\sum _{s=-1}^{k-1}\sum_{t=0}^{k-s}\Pi_\omega^{\perp}\omega^{[1]}_j\left((\partial_t^2 E)^{\left(s+\frac{1}{2}\right)},E^{\left(t+\frac{1}{2}\right)},E^{\left(k-s-t+\frac{1}{2}\right)}\right)\\*
			&\qquad\quad + \sum_{j=1}^2\sum _{s=0}^{k}\sum_{t=0}^{k-s}\Pi_\omega^{\perp}\omega^{[2]}_j\left((\partial_t E)^{\left(s+\frac{1}{2}\right)},(\p_tE)^{\left(t+\frac{1}{2}\right)},E^{\left(k-s-t+\frac{1}{2}\right)}\right).			
	\end{align}
This equation can be more succinctly expressed in the form \eqref{eq:U_1_second}.

For \eqref{eq:U_1_first},
applying $\Pi_\omega$ in \eqref{eq:big_eq_2} with $k$ replaced by $k-1$, we obtain an equation of the form
\begin{equation}
	\partial_\theta^{2}\Pi_\omega U_{k+1}=\Pi_\omega F_k\big(\chi^{(3)}, \u{E}_j,\partial^\alpha_{(t,x,\theta)}U_j:j=0,\dots, k,|\alpha|\leq 2\big),
\end{equation}
where each component of $F_k$ is  a polynomial in the entries of its arguments.
The right hand side must average to 0 with respect to $\theta$, since the left hand side does.
We can integrate in $\theta$ and choose the integration constant (which is a function of $(t,x)$ in this case) in such a way that the right hand side has no 0 Fourier mode; this yields \eqref{eq:U_1_first}.

If $m\geq 0$, the $O(h^{m+1/2})$ term of \eqref{eq:second_order_metric3} with $E_{\rm in}$ substituted there is given by \eqref{eq:O(h3/2+k)} with $k=m-1$ (\eqref{tE'} if $m=0$), and it vanishes exactly when its projections to both $\omega$ and $\omega^{\perp}$ vanish.
This corresponds to the equations \eqref{eq:U_1_first} for $k=m$ and \eqref{eq:U_1_second} for $k=m-1$ (\eqref{tEp} if $m=0$).
Therefore, as long as these are satisfied for $m\leq K$,  \eqref{eq:second_order_metric3} is satisfied to order $O(h^{K+1/2})$.

Now suppose that there exist smooth  profiles $U_{k+1}$, $-1\leq k\leq K$, such that \eqref{tE0}--\eqref{tEp} and \eqref{eq:U_1_first}--\eqref{eq:U_1_second} hold on a time interval $[0,T]$ and that their initial conditions are those originating from \eqref{IC1}, as explained in Section \ref{sub:initialization_of_the_problem_the_linear_solution}.
This implies in particular $U_k\big|_{t=0}\in C_0^{\infty}(\R^3\times S^1)$.
We will show by induction that for every $0\leq k \leq K$, $\Pi_\omega \partial_\theta U_{k+1}$ and  $\Pi_\omega^{\perp} \partial_\theta U_{k}$ have no Fourier modes outside the range $[-k-1,k+1]$ and  lie in $C_{0}^{\infty}([0,T]\times \R^3\times S^{1})$. 
If $ k=0$, the Fourier support statement holds for $\Pi_\omega^{\perp}U_0$, since we already saw that $U_0$ contains only the modes $-1$, $0$, $1$.
The Fourier support statement for $\Pi_\omega U_{1}$, follows from \eqref{eq:U_1_first} for $k=0$: more explicitly, 
 apply $\Pi_\omega$ to \eqref{tE'} and use that $\Pi_\omega \partial_\theta U_0=\Pi_\omega \u E_0=0$ to obtain
\begin{equation}\label{eq:k=0}
	\partial_\theta^{2} \Pi_\omega U_1=-\div \partial_\theta U_0\implies \partial_\theta \Pi_\omega U_1=-\sin(\theta)\div A_0 +\cos (\theta)\div B_0, 
\end{equation}see \eqref{U00}.
Regarding spatial support, $\Pi_\omega^{\perp}\partial_\theta U_0$ solves a homogeneous transport equation with initial conditions in $C_0^{\infty}(\R^3\times S^1)$, so it lies in $C_{0}^{\infty}([0,T]\times \R^3\times S^{1})$, and $\Pi_\omega\partial_\theta U_0 =0$.
 Similarly, $\Pi_\omega \partial_\theta U_{1}\in C_0^{\infty}([0,T]\times \R^3\times S^1)$ by \eqref{eq:k=0} and \eqref{Ein}.

Assuming the statement holds for $0\leq k\leq K-1$, we will show it for $k+1$.
Eq. \eqref{eq:perp_part} is a linear transport PDE for $\Pi_\omega^{\perp} \partial_\theta U_{k+1}$ that can be split into separate transport equations corresponding to non-zero Fourier modes (the zero mode yields an equation for the zero mode of $U_k$, as in Lemma \ref{lm:C0}).
By the induction hypothesis, the source terms in the left hand side of \eqref{eq:perp_part} only contribute Fourier modes in the range $[-k-1,k+1]$.
Now look at $\omega^{[1]}_j\big((\partial_t^2 E)^{\left(s+\frac{1}{2}\right)},E^{\left(t+\frac{1}{2}\right)},E^{\left(\ell-s-t+\frac{1}{2}\right)}\big)$.
It is not hard to see from \eqref{eq:E_der} that $(\partial_t^2 E)^{\left(s+\frac{1}{2}\right)}$ contributes Fourier modes in the range $[-s-2,s+2]$ and $E^{\left(t+\frac{1}{2}\right)}$ only contributes Fourier modes in the range $[-t,t]$. Hence the maximum possible Fourier mode in the product is given by 
$	(s+2)+t+(k-s-t)=k+2$, 
similarly for the minimal one.
Similarly, $(\partial_t E)^{\left(s+\frac{1}{2}\right)} $ only contributes modes in the range $[-s-1,s+1]$.
So the maximal possible mode in the last row of \eqref{eq:perp_part} is $(s+1)+(t+1)+(k-s-t)=k+2$.
Thus, taking into account initial conditions, $\partial_\theta \Pi_\omega^{\perp}U_{k+1}$ contains no  Fourier modes outside the range $[-k-2,k+2]$.
Regarding its spatial support, the only source term in the transport equation \eqref{eq:perp_part} corresponding to non-zero Fourier modes that does not contain a factor of $\chi^{(3)}\in C_0^{\infty}(\R^3)$ is $\Pi_\omega^{\perp}(\nabla( \partial_\theta U_{k+1}(t,x,\theta)\cdot \omega))$, which is compactly supported by the inductive assumption.
Thus $\Pi ^{\perp}\partial_\theta U_{k+1}$ must also be compactly supported in space for $t\in [0,T]$, if it is so for $t=0$.
For the Fourier support of $\partial_\theta \Pi_\omega U_{k+2}$, apply $\Pi_\omega$ to \eqref{eq:big_eq_2}: exactly as before, and using in addition that the Fourier content of $U_{k+1}$ is contained in $[-k-2,k+2]$, the same must hold for $ \Pi_\omega\partial_\theta U_{k+2}$.
For its spatial support, we can now write $\partial_\theta ^{2}\Pi_\omega U_{k+2}=\sum _{0<|\ell|\leq k+2}F^{\{\ell\}}(t,x)e^{i\ell\theta}$  with $F^{\{\ell\}}\in C_{0}^{\infty}([0,T]\times \R^3)$, by \eqref{eq:perp_part} with $\Pi_\omega$ applied to it. 
Thus $ \partial_\theta \Pi_\omega U_{k+2}=\sum _{0<|\ell|\leq k+2}\frac{1}{i\ell}F^{\{\ell\}}(t,x)e^{i\ell\theta}\in C_{0}^{\infty}([0,T]\times \R^3\times S^{1})
$.
This completes the induction. 
Finally, the statement for $\Pi_\omega^{\perp}U_{K+1}$ follows exactly as that for $\Pi_\omega^{\perp}U_{k+1}$ in the inductive step.
\end{proof}

\begin{proof}[Proof of Lemma \ref{lm:C0}]
The equation for $C_k$ can be derived from the zero Fourier mode (that is, the average in $\theta$) of \eqref{eq:big_eq_2}.
The zero mode of left hand side there is 
\begin{equation}
	 \partial_t^2U_{k}-\Delta U_{k} 
			+\nabla \div U_{k}.
\end{equation}
To see what terms appear on the right hand side, write the right hand side of \eqref{eq:second_order_metric3}, with $E_{\mathrm {in}}$ substituted there, as
\begin{equation}
 	F(t,x,\theta)=-\chi^{(3)}|h^{1/2}\u E(x)+h^{3/2}U_{\rm{in}}(t,x,\theta)|^{2}(h^{1/2}\u E(x)+h^{3/2}U_{\rm{in}}(t,x,\theta)).
\end{equation}
If $\phi=-t+x\cdot \omega$,
\begin{equation}
 	\partial_{t}^2(F(t,x,\phi/h))=\partial_{t}^{2}F-2h^{-1}\partial_{t\theta}^{2}F+h^{-2}\partial_\theta^{2} F
\end{equation}
evaluated at $(t,x,\phi/h)$. 
Since $\partial_\theta$ annihilates the zero mode,
we see that only the first term on the right hand side contributes to it, and its $O(h^{k+3/2})$ term can be written as in \eqref{eq:forms} in the form 
\begin{equation}
\begin{aligned}
&
\sum_{j=1}^2\sum _{s=1}^{k}\sum_{l=0}^{k-s}\omega^{[1]}_j
\left(\partial_t^{2} U_{s-1},(\underline{E}_l+U_{l-1}),(\underline{E}_{k-s-l}+U_{k-s-l-1})\right)\\
			&\qquad + \sum_{{j=1}}^2\sum _{s=1}^{k}\sum_{l=1}^{k-s}\omega^{[2]}_j\left(\partial_t U_{s-1},\partial_tU_{l-1},(\underline{E}_{k-s-l}+U_{k-s-l-1})\right),
\end{aligned}	 
\end{equation}
where $U_\ell=0 $ if $\ell<0$.
We thus obtain \eqref{eq:0-mode}. 

For the last statement, a computation shows that 
\begin{align}
		 &\quad \partial_t^2 F= -h^{5/2}\chi^{(3)}\big(2(\u E_0\cdot \partial_t^{2}U_{0})\u E_0+|\u E _0|^2 \partial_t^{2}U_0\big)\\* \label{eq:F}
		 &\qquad -h^{7/2	}\chi^{(3)}\Big(2|\partial_t U_0|^2\u E_0+2((\u{E}_1+U_0)\cdot \partial_t^{2} U_0)\u E_0 
		 +2(\u E_0\cdot \partial_t^2 U_1)\u E_0+|\u E_0|^2\partial_t^2 U_1\\*
		  &\qquad\qquad +2 (\u E_{0}\cdot \partial_t^{2} U_0)(\u E_1+U_0) 
		   +4(\u E_{0}\cdot \partial_t U_0)\partial_t U_0+2(\u E_{0}\cdot(\u E_1+ U_0))\partial_t^{2}U_0\Big)	 +O(h^{9/2}),
\end{align}	
hence the right hand side of \eqref{eq:big_eq_2} does not contribute to the zero mode when $k=0$.
Thus $C_0$ solves 
\begin{equation}\label{eq:C0_eq}
	\partial_t^2C_0-\Delta C_0+\nabla \div C_0  =0, \qquad C_0\big|_{t=0}=\partial_tC_0\big|_{t=0}=0,
\end{equation}
which implies $C_0=0$, as explained after the statement of Lemma \ref{lm:C0}.
Therefore, the zero mode of the $O(h^{5/2})$ term of \eqref{eq:F} vanishes and so the right hand side of \eqref{eq:0-mode} vanishes when $k=0,1$.
\end{proof}

\section{Exact Solutions }
\label{sec:exact_solutions}

In this section  we will  use a result of Guès (\cite{Gues93}) to prove the existence of exact solutions for the Maxwell system near our approximate ones (at least in our region of interest), thus justifying the formal computations above.
We consider a first order system with the property that any sufficiently $L	^\infty$-small solutions of it also solve (\ref{eq1a}--\ref{eq2a}), and vice versa. 
Recall that we assumed $c=(\mu_0\epsilon_0)^{-1/2}=1$, which amounts to replacing $t$ by $ct$. Replacing $H$ by $(\mu_0/e_0)^{1/2}H$, we arrive at (\ref{eq1a}--\ref{eq4a}) but with $\e_0$ and $\mu_0$ there replaced by 1.
Also recall that $\chi^{(1)}=0$.  
We write $(t,x)$ by $y=(y_0,y_1,y_2,y_3)\in \R^4$, and we also write $u=(E,H)^T$.
Let
\begin{equation}
	L(\chi^{{(3)}}(x),u)u\coloneqq \sum_{j=0}^3 A_j(\chi^{{(3)}}(x),u )\p_{y_j}u =0\label{first_order_modified},
\end{equation}
where
\begin{equation}\label{eq:A_0}
	A_0(a,u)=\begin{pmatrix}
		\Id_3 +\psi(a|E|^2)(a|E|^2\Id_3+2 aE\otimes E)	&\quad  0\\
		0 &\quad  \Id_3
	\end{pmatrix}\text{,}\quad
\end{equation}
with $\psi\in C^\infty(\R)$ satisfying $\psi(s)= 1$ for $s>-1/8$ and $\psi(s)=0$ for $s<-1/4$, 
and for $j=1,2,3$,
\begin{equation}
A_j(a,u)=\begin{pmatrix}
		0 & - U_j\\
		U_j & 0
	\end{pmatrix},\quad 
	U_1=\begin{pmatrix}
		0 & 0 & 0\\
		0 & 0 & -1\\
		0 & 1 & 0\\
	\end{pmatrix},
	\quad 
	U_2=\begin{pmatrix}
		0 & 0 & 1\\
		0 & 0 & 0\\
		-1 & 0 & 0\\
	\end{pmatrix},
	\quad 
	U_3=\begin{pmatrix}
		0 & -1 & 0\\
		1 & 0 & 0\\
		0 & 0 & 0\\
	\end{pmatrix}.
\end{equation} %
If $\psi$ were identically 1 everywhere, we would get exactly the original system (\ref{eq1a}-\ref{eq2a}), with $a=\chi^{(3)}$.
A solution of \eqref{first_order_modified} satisfying $\chi^{(3)} |E|^2>-1/8$ is also a solution of the original one and vice versa. 
A computation using the matrix determinant lemma shows that 
\begin{equation}
		\det(A_0-\lambda \Id_6)=(1-\lambda)^{3}\big(1+a|E|^2\psi(a|E|^2)-\lambda\big)^{2}\Big(1+3a |E|^2\psi(a|E|^2)-\lambda\Big),
\end{equation}
so the eigenvalues of $A_0$ are given by
\begin{equation}
	\lambda_1 =1,\quad \lambda_2 = 1+\psi(a|E|^2)a|E|^2,\quad \lambda_3 =1+3\psi(a|E|^2)a|E|^2.
\end{equation}
The support condition on $\psi$ guarantees that they are always positive, thus $A_0$ is symmetric positive definite.
Since $A_0^{-1}A_{j}$ is symmetric for $j=1,2,3$, \eqref{first_order_modified} is a symmetric hyperbolic system in the sense of \cite{Gues93}.
There is no zeroth order term, which implies $u=\text{const.}$ is a solution (without the divergence free condition \eqref{eq4a} taken into account). 

The initial conditions we want to impose are 
\begin{equation}\label{system_ic}
	u\big|_{t=0}=g\coloneqq \big(\ h^{1/2}\underline{E}(x,h)+2h^{3/2}U_{\rm init}(x)\cos \left(\frac{x\cdot\omega}{h}\right),\ 0\ \big)^T,
\end{equation}
with $\u E$ as in Lemma \ref{lm:stationary} with suitable $s$.
This is in accordance with taking $\partial_tE=0$ in \eqref{IC1} and the discussion following \eqref{IC_E2}. 
Note that $E\big|_{t=0}$ does \emph{not} satisfy \eqref{eq4a} everywhere.

We recall the statement of the main theorem in \cite{Gues93}, adapted to our setting.
Fix a real number $T> 0$ (this will be the time for which our asymptotic solution is defined, long enough to allow the asymptotic solution to interact with the medium and exit, so that it can be measured), and an integer $m\geq 0$; then write $\Omega= [0,T]\times \R^{3}$.
For $\rho>0$ define the following spaces of functions depending on a parameter $h\in (0,1]$:
\begin{align}
	&\begin{aligned}
			 \mathcal{A}_{\rho}^{m}=\big\{u_h: u_h\in C^{0}([0,T];W^{m,\infty}(\R^{3}))\ \forall\ h\in (0,1]&, \text{ and for all }t\in (0,T]\text{  and }h\in (0,1], \\ \|u_h\| _{L^\infty(\R^3)}\leq \rho,\quad  
			 &  \|\partial_ x^{\alpha }u_h\| _{L^\infty(\R^3)}\leq \rho h^{1-|\alpha|} \text{ if }1\leq |\alpha|\leq m\big\},\\
	\end{aligned}\\
	&\begin{aligned}
			 \mathcal{B}_{\rho}^{m}=\big\{u_h: u_h\in C^{0}([0,T];H^m(\R^{3}))\ \forall\ h\in (0,1]&, \text{ and for all }t\in (0,T)\text{  and }h\in (0,1], \\
			 &\quad   \|\partial_ x^{\alpha }u_h\| _{L^2(\R^3)}\leq \rho h^{-|\alpha|} \text{ for }0\leq |\alpha|\leq m\big\}.\\
	\end{aligned}
\end{align}

In our context, the theorem reads
\begin{theorem}[{\cite[Theorem 1.1]{Gues93}}]\label{thm:gues}
	Suppose that $m\geq 3$ and $M\geq m$, and let $\rho>0$ be given such that $\chi^{(3)}\in \mathcal{A}_\rho^{m}$. Then there exist $h_{\rho}$ and $\sigma_{\rho}$ such that if $v_h\in \mathcal{A}_\rho^{m+1}$ satisfies 
	\begin{equation}\label{eq:approximate_sol}
		L(\chi^{(3)}(x),v_{h})v_{h}\in h^{M}\mathcal{B}_{\rho}^{m},
	\end{equation}
	then for any given Cauchy data $g_h\in v_h\big|_{t=0}+h^{M}\mathcal{B}_{\rho}^{m}$, the Cauchy problem
	\begin{equation}
		L(\chi^{(3)}(x),u_{h})u_{{h}}=0,\quad u_{h}\big|_{t=0}=g_h
	\end{equation}
	admits for  $h\in (0,h_{\rho}]$ a unique solution $u_h\in v_h+h^{M}\mathcal{B}_{\sigma}^{m}$ on  $\Omega$. 
\end{theorem}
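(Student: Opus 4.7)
My plan is to establish the theorem via a Picard iteration in $h$-scaled Sobolev spaces, adapting the standard quasilinear symmetric hyperbolic theory to the semiclassical scaling built into the spaces $\calA_\rho^m$ and $\calB_\rho^m$.

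First, I would reformulate as a perturbation problem. Writing $u_h=v_h+h^M w_h$ and expanding
\[
L(\chi^{(3)},u_h)u_h = L(\chi^{(3)},v_h)v_h + h^M\Bigl[A_0(\chi^{(3)},v_h)\p_t w_h + \sum_{j=1}^3 A_j\p_{y_j}w_h\Bigr] + h^M Q_h(w_h),
\]
where $Q_h$ collects the quasilinear corrections stemming from the $u$-dependence of $A_0$. Dividing by $h^M$ and invoking the residual hypothesis $L(\chi^{(3)},v_h)v_h\in h^M\calB_\rho^m$ yields a quasilinear symmetric hyperbolic equation
\[
A_0(\chi^{(3)},v_h+h^M w_h)\p_t w_h + \sum_{j=1}^3 A_j\p_{y_j}w_h + B_h(w_h)w_h = F_h,
\]
with $F_h\in\calB_\rho^m$ uniformly in $h$ and initial data $w_h|_{t=0}=h^{-M}(g_h-v_h|_{t=0})\in\calB_\rho^m$.

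Second, I would set up the iteration $w_h^{(0)}=0$, with $w_h^{(n+1)}$ defined as the unique solution of the \emph{linear} symmetric hyperbolic system obtained by freezing the coefficient argument at $w_h^{(n)}$. For each fixed $h\in(0,1]$ the coefficients are smooth in $(t,x)$ so classical linear theory produces $w_h^{(n+1)}\in C^0([0,T];H^m)$; the substance is to prove this existence is uniform in $h$ in the correct norm. Introduce
\[
\|w\|_{m,h}^2 \coloneqq \sum_{|\a|\leq m} h^{2|\a|}\|\p_x^\a w\|_{L^2(\R^3)}^2,
\]
so that membership in $\calB_\rho^m$ corresponds to a ball in $L_t^\infty H_h^m$. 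Applying $h^{|\a|}\p_x^\a$ to the linearized equation, pairing with $A_0\, h^{|\a|}\p_x^\a w$, integrating by parts in $x$ and using symmetry of $A_0^{-1}A_j$, and then handling commutators with semiclassical Moser--Kato estimates would yield
\[
\fr{\d}{\d t}\|w_h^{(n+1)}\|_{m,h}^2 \leq C\bigl(1+\|w_h^{(n)}\|_{m,h}\bigr)\|w_h^{(n+1)}\|_{m,h}^2 + C\|F_h\|_{m,h}\|w_h^{(n+1)}\|_{m,h},
\]
uniformly for $h\in(0,h_\rho]$. Gronwall produces $w_h^{(n)}\in\calB_\sigma^m$ for some $\sigma\geq\rho$, and the analogous estimate applied to $w_h^{(n+1)}-w_h^{(n)}$ in a slightly weaker norm delivers contraction (Bona--Smith--type regularization recovers the full $\calB_\sigma^m$ regularity of the limit). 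Finally, one verifies that $\chi^{(3)}|E|^2>-1/8$ along the solution, so that the cutoff $\psi$ is inert and $u_h$ solves the original Maxwell system.

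The heart of the matter, and the main obstacle, is the uniform-in-$h$ energy estimate. Naively it seems hopeless: $v_h\in\calA_\rho^{m+1}$ only controls $\|\p_x v_h\|_{L^\infty}\lesssim h^{-1}$, so the quasilinear commutator $[\p_x^\a,A_0(v_h+h^M w)]$ contains terms of size $h^{-|\a|}$, which naively gives exponential growth $e^{CT/h}$. The redemption is the precise balance encoded in the spaces: every derivative $\p_x v_h$ that appears via the chain rule costs exactly one factor of $h^{-1}$, while every commuted derivative that lands on $w$ absorbs a compensating factor of $h$ from the norm weight $h^{|\a|}$. Making this bookkeeping rigorous for all tripled products and for the triple product structure of $|E|^2E$ in $A_0$---and checking that the contribution of the $\a$-th derivative of $\psi(\chi^{(3)}|E|^2)$ is controlled by the same scheme---is where Guès's argument concentrates its work, and where I would correspondingly concentrate the technical effort.
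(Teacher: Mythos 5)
This statement is not proved in the paper at all: it is Theorem~1.1 of Gu\`es \cite{Gues93}, quoted (in adapted notation) and used as a black box, so there is no internal proof to compare your attempt against. Judged on its own, your sketch does follow the strategy of Gu\`es's actual proof --- perturb off the approximate solution, divide out $h^M$, and close a uniform-in-$h$ energy estimate for the resulting quasilinear symmetric hyperbolic system in the $h$-weighted norms that the spaces $\mathcal{B}_\rho^m$ encode --- so as a blueprint it is sound.

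Two caveats. First, you misread the definition of $\mathcal{A}_\rho^{m}$: it requires $\|\partial_x^{\alpha}v_h\|_{L^\infty}\leq \rho\, h^{1-|\alpha|}$ for $1\leq|\alpha|\leq m$, so the first spatial derivative of $v_h$ is bounded \emph{uniformly} in $h$ (not by $h^{-1}$ as you claim), and each further derivative costs one power of $h^{-1}$. The exponent $1-|\alpha|$ rather than $-|\alpha|$ is exactly the spare factor of $h$ that makes the commutator terms $O(h)$ rather than merely $O(1)$ after reweighting; your ``redemption'' paragraph describes this balance correctly in spirit, but the bookkeeping as literally stated does not match the spaces defined in the paper. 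Second, the proposal is a plan rather than a proof: the uniform Moser/Gagliardo--Nirenberg product and composition estimates in the $h$-weighted norms (including for $\psi(\chi^{(3)}|E|^2)$ and the cubic structure of $A_0$), the control of $\partial_t v_h$ and $\partial_t w_h$ (which the spaces do not bound directly --- one must recover them from the equation and the residual hypothesis), and the contraction step in a weaker norm are all asserted, not carried out. These are precisely the points where the content of Gu\`es's paper lies; since the present paper deliberately imports the result rather than reproving it, your effort would be better spent verifying that the hypotheses of the cited theorem are met by $v_M$ (which is what Section~\ref{sec:exact_solutions} actually does) than reconstructing its proof.
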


Fix $T>0$,  $m\geq 3$, and $M> m$
and let $\rho>0 $ be as in  the statement of Theorem \ref{thm:gues}, that is, with $\chi^{(3)}\in \mathcal{A}_\rho^{m}$.
Further, let $\u E_j$ for $0\leq j\leq M+1$ be as in Lemma~\ref{lm:stationary} with the $s$ there satisfying $s\geq m+1$.
We consider the expressions, defined for $t\in [0,T]$, 
\begin{equation}\label{eq:approx}
\begin{aligned}
		E_{{M}}(t,x)\coloneqq &h^{1/2}\underline{E}_0+\sum_{k=0}^{M} h^{k+3/	2}
	 \left(\underline{E}_{k+1}+U_{\mathrm{in},k}\Big(t,x,\frac{\phi_{\rm in}}{h}\Big)\right)
	 +\sum_{k=0}^{M} h^{k+3/	2}
	 U_{\mathrm{out},k}\Big(t,x,\frac{\phi_{\rm out}}{h}\Big),\\
	 H_{{M}}(t,x)\coloneqq&-\int_0^{t} \curl E_{{M}}(\tau,x,h)\d \tau,
\end{aligned}
\end{equation}
where $U_{\mathrm{in},k}=U_k$ as in \eqref{ansatz} satisfy the equations in Lemmas \ref{lm:eikonal_transport}-\ref{lm:C0}, and $U_{\rm{out},k}(\theta)=e^{i\theta}a_{\rm{in},+}^{(k)}+e^{-i\theta}a_{\rm{out},-}^{(k)}$, see \eqref{ain}.
Recalling that $\phi_{\rm in/out}=\mp t+x\cdot \omega$, we have 
\begin{equation}
 	\partial_t\left(U_{\mathrm{in/out},k}\big(t,x,\frac{\phi_{\mathrm{in/out}}}{h}\big) \right)
=\partial_t U_{\mathrm{in/out},k}\mp h^{-1}\partial_\theta U_{\mathrm{in/out},k}\Big|_{\big(t,x,\frac{\phi_{\mathrm{in/out}}}{h}\big)}.
 \end{equation}
Thus 
\begin{align}
		 \int_0^{t} \curl\Big( U_{\rm{in/out},k}\big(\tau,x,\frac{\phi_{\mathrm{in/out}}}{h}\big)\Big)\d \tau=&
		 \int_0^{t} \left(\curl U_{\rm{in/out},k}+ h^{-1}\omega\times  \partial_\theta U_{\rm{in/out},k}\right)\Big|_{\big(\tau,x, \frac{\phi_{\mathrm{in/out}}}{h}\big)} \d \tau\\\label{eq:approx_2}
		  =&
		 \int_0^{t} \left(\curl U_{\rm{in/out},k}\pm\omega\times  \partial_t U_{\mathrm{in/out},k}\right)\Big|_{\big(\tau,x, \frac{\phi_{\mathrm{in/out}}}{h}\big)} \d \tau \\* 
		 &\mp \omega \times U_{\mathrm{in/out},k}\Big(t,x,\frac{\phi_{\mathrm{in/out}}}{h}\Big)\pm \omega \times U_{\mathrm{in/out},k}\Big(0,x,\frac{x\cdot \omega}{h}\Big).
\end{align}
By the construction in Section \ref{sub:initialization_of_the_problem_the_linear_solution} and Proposition \ref{prop:approx_sol}, all of the $U_{\mathrm{in/out},k}$, $k\geq 0$, are compactly supported in space and smooth; the same holds for $\underline{E}_{k+1}$, while $\underline{E}_0$ is constant.
Upon setting  $v_M=(E_{{M}},H_{{M}})^{T}$, we can shrink $h$ enough to guarantee that $v_{M}\in \mathcal{A}_{\rho}^{m+1}$.
Further, we shrink $h$ even more if needed, so that $\chi^{(3)}|E_{{M}}|^2>-1/8$ (so that $\psi(\chi^{(3)}|E_{{M}}|^2)=1)$.

By Lemma \ref{lm:U1}, $E_{{M}}$ solves \eqref{eq:second_order_metric3} up to order $O(h^{M-1/2})$, so  we have
\begin{equation}
	\begin{aligned}
		 \p_t^2 E_{{M}}- \Delta E_{{M}} + \nabla \div E_{{M}}+\chi^{(3)}\p_t^2 (|E_{{M}}|^2E_{{M}}) 
		 =h^{M+1/2}(A_{\rm in}(t,x,\phi_{\rm in}/h)+A_{\rm out}(t,x,\phi_{\rm out}/h)),		
	\end{aligned}
\end{equation}
where $A_{\rm in/out}\in C_0^{\infty}([0,T]\times \R^{3}\times S^1)$, therefore if $h$ is small enough, the right hand side lies in $h^{M}\mathcal{B}_\rho^{m}$.
Hence by \eqref{eq:curl} and \eqref{eq:approx},\begin{equation}
	\partial_t( \partial_t (E_{{M}}+\chi^{(3)}|E_{{M}}|^2E_{{M}})-\curl H_{{M}})=h^{M+1/2}(A_{\rm in}(t,x,\phi_{\rm in}/h)+A_{\rm out}(t,x,\phi_{\rm out}/h)),
\end{equation}
and $\partial_t (E_{{M}}+\chi^{(3)}|E_{{M}}|^2E_{{M}})-\curl H_{{M}}\big|_{t=0}=0$,
thus in particular
\begin{equation}
	\partial_t (E_{{M}}+\chi^{(3)}|E_{{M}}|^2E_{{M}})-\curl H_{{M}}\in h^{M}\mathcal{B}_\rho^{m} , \qquad \partial_t H_{{M}}+\curl E_{{M}}=0.
\end{equation}
That is, \eqref{eq:approximate_sol} is fulfilled (recall $\psi(\chi^{(3)}|E_{{M}}|^2)=1$).
Regarding the Cauchy data of $v_M$,
\begin{equation}
	v_M\big|_{t=0}=\Big(\sum_{k=0}^{M+1}h^{k+1/2}\underline{E}_k+2h^{3/2}U_{\rm init}(x)\cos \left(\frac{x\cdot\omega}{h}\right),0\Big)^{T}.
\end{equation}
Using \eqref{eq:E_error}, and assuming that 	 $s>m+1$ there,  we see that the error compared to $g $ from \eqref{system_ic} satisfies $v_M\big|_{t=0}-g= O_{H^{s-1}(\R^{3})}(h^{M+5/2})$, so in particular in $h^{M}\mathcal{B}_\rho^{m}$, if $h$ is small enough.
We conclude that the assumptions of Theorem \ref{thm:gues} are satisfied, yielding the existence of a unique exact solution $u_h=(E,H)$ of \eqref{first_order_modified}-\eqref{system_ic} such that $u_h-v_M\in h^{M}\mathcal{B}_\sigma^{m}$, for some $\sigma>0$.
By the Sobolev embedding theorem (\cite[Theorem 3.26]{McLean-book})
\begin{equation}\label{eqSob_emb}
\sup_{\Omega }|u_h-v_M|\leq C\sup _{|\alpha|\leq m }\sup_{t\in [0,T]} \| \partial^{\alpha }(u_h-v_M)\|_{L^{2}(\R^3)}\leq C\sigma h^{M-m},	
\end{equation}
 so upon shrinking $h$ we can guarantee that $\chi^{(3)}|E|^2>-1/8$, which implies that $u_h$ solves the original system \eqref{eq1a}-\eqref{eq2a}.
Regarding the divergence-free conditions, 
they are satisfied by $u_h$ in $B(0,R_0)$ for $t=0$ by Lemma \ref{lm:stationary}.
Since $\partial_t \div (E+\chi^{(3)}|E|^2)=0$ and  $\partial_t \div H=0$ by \eqref{eq1a} and \eqref{eq2a} respectively, they are also satisfied for all $t\geq 0$ in $B(0,R_0)$.

Going back to \eqref{eq:second_order_metric3}, we would also like to show the existence of a unique solution in some class.
Let $m'\geq 4$,  and $M'\geq m'+2$,  which implies $M'-2>m'-1\geq 3.$
Then the original system \eqref{eq1a}-\eqref{eq2a} admits unique exact solutions $u_h=(E,H)\in v_{M'}+h^{M'}\mathcal{B}_\sigma^{m'}$ and  $u_h'=(E',H')\in v_{M'-2}+h^{M'-2}\mathcal{B}_{\sigma'}^{m'-1}$ on $\Omega$, for some $\sigma$, $\sigma'>0$.
We claim that they agree if $h$ is small enough.
Indeed, for $t\in [0,T]$, and for $|\alpha|\leq m'-1$,
\begin{align}
		 h^{2-M'}\| \partial^{\alpha}(u_h-v_{M'-2})\|_{L^2(\R^3)}\leq& h^{2-M'}\| \partial^{\alpha}(u_h-v_{M'})\|_{L^2(\R^3)}+h^{2-M'}\| \partial^{\alpha}(v_{M'}-v_{M'-2})\|_{L^2(\R^3)} 
\\ 
\leq& h^{2-M'}\| \partial^{\alpha}	(u_h-v_{M'})\|_{L^2(\R^3)}\\*
&+h^{2-M'}\| \partial^{\alpha}(h^{M'+1/2}(E_{M'}+U_{\mathrm{in},M'-1}+U_{\mathrm{out},M'-1}))\|_{L^2(\R^3)}\label{eq:difference}
\\*
&\quad +h^{2-M'}\| \partial^{\alpha}(h^{M'+3/2}(E_{M'+1}+U_{\mathrm{in},M'}+U_{\mathrm{out},M'}))\|_{L^2(\R^3)}\\
		 \leq& \sigma h^{-|\alpha|+2}+h^{2-M'}C_{\alpha}h^{M'+1/2-|\alpha|}\leq \sigma' h^{-|\alpha|}
\end{align}
for $h$ sufficiently small,
where for the third inequality we used that $v_{M'}-u_h\in h^{M'}\mathcal{B}_\sigma^{m'}$  as well as the fact that the quantities  in  \eqref{eq:approx} and  \eqref{eq:approx_2} are compactly supported and smooth.
We conclude that $u_h-v_{M'-2}\in h^{M'-2}\mathcal{B}_{\sigma'}^{m'-1}$ if $h$ is sufficiently small, so $u_h=u_h'$ by uniqueness.

Now consider the first part $E$ of the exact solution  $ u_h$, which
 solves \eqref{eq:second_order_metric3} with initial conditions \eqref{IC1}, and satisfies \eqref{div0} in $B(0,R_0)$.
We claim that this solution is unique in $E_{M'}+h^{M'}\mathcal{B}_\sigma^{m'}$, if $h$ is small enough.
It suffices to show that $H-H_{M'-2}\in h^{M'-2}\mathcal{B}_{\sigma'}^{m'-1}$, since then we obtain the claim by uniqueness of the solution to the system in $v_{M'-2}+h^{M'-2}\mathcal{B}_{\sigma'}^{m'-1}$.
We have, for $|\alpha|\leq m'-1$ 
\begin{align}
	h^{2-M'}	\| \partial^{\alpha}(H-H_{M'-2})\|_{L^2(\R^3)}
	\leq &\,h^{2-M'}	\| \partial^{\alpha}(H-H_{M'})\|_{L^2(\R^3)}+h^{2-M'}	\| \partial^{\alpha}(H_{M'}-H_{M'-2})\|_{L^2(\R^3)}\\ 
	{\leq} & h^{2-M'}	\| \int _0^{T}\partial^{\alpha}\curl(E-E_{M'}) \d t\|_{L^2(\R^3)}+h^{2-M'}	C_\alpha h^{M'+1/2-|\alpha|}\\ 
		\leq &h^{2-M'}	 \int _0^{T}\|\partial^{\alpha}\curl (E-E_{M'})\|_{L^2(\R^3)} \d t+	C_\alpha h^{5/2-|\alpha|}\\ 
		\leq &\,	h^{2-M'} \int _0^{T} \sigma h^{M'-|\alpha|-1}\d t+	C_\alpha h^{5/2-|\alpha|}\\ 
		\leq&\, (hT\sigma+C_\alpha h^{5/2}) h^{-|\alpha|}\leq \sigma' h^{-|\alpha|},
\end{align}
provided $ h$ is sufficiently small.
For the second inequality we used \eqref{eq:approx_2} and a computation as in \eqref{eq:difference};
for the third, Minkowski's integral inequality, and for the fourth the fact that $E\in E_{M'}+h^{M'}\mathcal{B}_\sigma^{m'}$, so that
$h^{-M'}\|\partial^{\beta}(E-E_{M'})\|_{L^2(\R^3)}\leq \sigma h^{-|\beta|}$ for all $t\in [0,T]$ and $|\beta|\leq m'$.
Since we just showed that $E\in E_{M'}+h^{M'}\mathcal{B}_\sigma^{m'}$  implies $(E,H) \in v_{M'-2}+h^{M'-2}\mathcal{B}_{\sigma'}^{m'-1}$, and we have uniqueness for solutions of the system in the latter space, $E$ is unique in the former.

We have reached the following proposition, which proves Theorem~\ref{thm_main}\ref{item:b} and \ref{item:c}. Note that \ref{item:b} is a special case of \ref{item:c}, when $\underline{E}=0$.

\begin{proposition} \label{prop_Gues}
	Let $T>0$, and $m\geq 3$ and $M> m$ be integers.
	Consider  the  asymptotic solution $v_M=(E_{{M}},H_{{M}})^{T}$, see \eqref{eq:approx},	defined for $t\in [0,T]$.
	There exists $\sigma>0$ and $h_0$ such that for $0<h\leq h_0$,   the system \eqref{eq1a}-\eqref{eq2a} with initial conditions \eqref{system_ic}, where $\underline{E}$ is as in Lemma~\ref{lm:stationary} with $s\geq m+1$, has a unique 	solution  $(E,H)\in v_M+ h^{M}\mathcal{B}_{\sigma}^{m}$ on $\Omega$. Moreover, \eqref{eq3a}-\eqref{eq4a} are satisfied in a large ball of radius $R_0$.
	If we take  $m\geq 4$ and $M\geq m+2$, then $E$ above is the unique solution to \eqref{eq:second_order_metric3}, \eqref{IC1} in $E_h+h^{M}\mathcal{B}^{m}_\sigma$, and it also solves \eqref{div0} for $|x|	<R_0$ and $t\in [0,T]$.
\end{proposition}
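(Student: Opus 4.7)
The plan is to realize the exact solution as a small perturbation of the WKB expansion $v_M=(E_M,H_M)^T$ and then invoke Guès' theorem (Theorem~\ref{thm:gues}) applied to the symmetrized first order system \eqref{first_order_modified}. First I would assemble the approximate solution: Proposition~\ref{prop:approx_sol} together with Lemma~\ref{lm:stationary} produces $E_M$ as in \eqref{eq:approx} with $\underline E_0$ constant and all other profiles compactly supported and smooth, and I define $H_M$ by \eqref{eq:approx} so that Faraday's law $\partial_t H_M + \curl E_M = 0$ holds identically. The derivatives of $v_M$ with respect to $x$ pick up at worst a single factor of $h^{-1}$ per derivative because of the $\phi/h$ dependence, so for $h$ small enough $v_M \in \mathcal{A}_\rho^{m+1}$ with $\rho$ large enough to contain $\chi^{(3)}$, and I can simultaneously arrange $\chi^{(3)}|E_M|^2 > -1/8$ so that $\psi(\chi^{(3)}|E_M|^2)=1$ and the modified system coincides with the physical one along $v_M$.

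Next I would verify the two hypotheses of Theorem~\ref{thm:gues}. By Lemma~\ref{lm:U1}, $E_M$ solves the second order equation \eqref{eq:second_order_metric3} up to an $O(h^{M+1/2})$ source of the form $h^{M+1/2}(A_{\rm in}(t,x,\phi_{\rm in}/h) + A_{\rm out}(t,x,\phi_{\rm out}/h))$ with smooth compactly supported profiles, hence in $h^M \mathcal{B}_\rho^m$. Using the identity $\curl\curl = \nabla\div - \Delta$ and the construction of $H_M$, this error translates into an error for the first of the Maxwell equations $\partial_t D - \curl H$, after one $t$-integration starting from vanishing initial data; the second equation is exact. This gives $L(\chi^{(3)},v_M)v_M \in h^M\mathcal{B}_\rho^m$. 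For the Cauchy data, \eqref{eq:E_error} gives $v_M|_{t=0} - g = O_{H^{s-1}}(h^{M+5/2})$, which sits in $h^M\mathcal{B}_\rho^m$ provided $s\geq m+1$.

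Guès' theorem then produces, for $h\le h_0$, a unique $u_h = v_M + w_h$ with $w_h \in h^M\mathcal{B}_\sigma^m$ solving \eqref{first_order_modified}. The Sobolev embedding bound \eqref{eqSob_emb} guarantees $\chi^{(3)}|E|^2 > -1/8$ after shrinking $h$, so $\psi\equiv 1$ there and $u_h$ in fact solves the unmodified Maxwell system \eqref{eq1a}--\eqref{eq2a}. The divergence-free conditions propagate for free: applying $\div$ to \eqref{eq1a} and \eqref{eq2a} shows $\div D$ and $\div H$ are $t$-independent, and they vanish at $t=0$ in $B(0,R_0)$ by Lemma~\ref{lm:stationary} together with the construction of $U_{\rm init}$ in \eqref{IC0}.

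The main obstacle, and the remaining step, is uniqueness for the second order equation \eqref{eq:second_order_metric3} in the class $E_M + h^M\mathcal{B}_\sigma^m$, since Guès' uniqueness statement concerns the first order system. The strategy will be: given a competitor $E$ in that class, reconstruct $H$ via $H = -\int_0^t \curl E\,\d\tau$ and verify that the pair $(E,H)$ lies in the analogous class for the first order system, reducing to the uniqueness already obtained. To gain the needed regularity margin for $H$ I would work at higher orders $m'\geq 4$, $M'\geq m'+2$ and compare $H$ to $H_{M'-2}$ in $\mathcal{B}_{\sigma'}^{m'-1}$, using Minkowski's integral inequality to bound
\begin{equation}
\| \partial^\alpha (H - H_{M'-2})\|_{L^2} \leq \int_0^T \|\partial^\alpha \curl (E - E_{M'})\|_{L^2}\,\d\tau + \|\partial^\alpha (H_{M'} - H_{M'-2})\|_{L^2},
\end{equation}
with the first term controlled by $E \in E_{M'}+h^{M'}\mathcal{B}_\sigma^{m'}$ and the second by the explicit form of $v_{M'}-v_{M'-2}$ from \eqref{eq:approx}--\eqref{eq:approx_2}. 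This places $(E,H)$ into $v_{M'-2} + h^{M'-2}\mathcal{B}_{\sigma'}^{m'-1}$ where the first order uniqueness applies, forcing $(E,H)=u_h$.
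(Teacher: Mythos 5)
Your proposal follows the paper's proof essentially step for step: the same construction of $v_M=(E_M,H_M)$ with $H_M=-\int_0^t\curl E_M$, the same verification that the residual and the Cauchy-data error lie in $h^M\mathcal{B}_\rho^m$ (integrating the second-order residual once in $t$ from vanishing data), the same application of Guès' theorem followed by the Sobolev-embedding argument to remove the cutoff $\psi$, the same propagation of the divergence conditions, and the same two-tier comparison at orders $(m',M')$ versus $(m'-1,M'-2)$ with Minkowski's integral inequality to transfer uniqueness from the first-order system to \eqref{eq:second_order_metric3}. The argument is correct and no genuinely different route is taken.
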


\section{Proof of Theorem~\ref{thm_inverse}} \label{sec_inverse}
\begin{proof}[Proof of Theorem~\ref{thm_inverse}]
We construct a solution $E$ as in Theorem \ref{thm_main}\ref{item:c}, using the more precise statement of Proposition \ref{prop_Gues}.
Spefically, we let  $m\geq 4 $ and $M\geq m+3$ there to obtain uniqueness, as well as to guarantee that if  $f\in h^{M-3/2}\mathcal{B}^{m}_\sigma $ then $ h^{-1}\|f\|_{L^\infty} \leq C  $ for some $h$-independent constant (this follows as in \eqref{eqSob_emb}).
As before, we assume $c=1$. 

First, knowing $E_\textrm{in}$ for $x\in \pi_{R,\omega}$ as stated in the theorem, and $\omega$ fixed, we explain now how to use a similar argument as in \cite[Proposition~3.2]{S-Antonio-nonlinear} to separate the high-frequency part of $E_\textrm{in}$ in \r{exp1} from the rest, up to $O(h^{5/2})$.
To this end, assume for now that $\omega = e_1$, write $x=(x_1,x')$, and  choose  $\psi\in C_0^\infty((0,2R_0))$. 
Then (after subtracting $h^{1/2}\u E_0$), multiply $ E_{\rm in }$ by  $\psi(t)\cos(\phi/h)$
and integrate in $t$ over a time interval equal to $\supp U_\textrm{init}(\cdot, x)$.  
For $x'$ so that $|x'|\le R$, %
we have for the second component of $E_{\rm in}$ 
\begin{align}
\int_0^{2R_0}& h^{-3/2} E_\textrm{in,2}(t,R,x') \cos\frac{-t+R}{h}\psi(t)\,\d t\\ &= \int_0^{2R_0} U_{{\rm init},2}(R-t,x') \cos\Big(\frac{-t+R}{h}+\tau(R,x')\Big) \cos\frac{-t+R}{h}\psi(t)\,\d t + O(h) \\
&= \frac12 \cos( \tau(R,x')) \int _0^{2R_0}U_{{\rm init},2}(R-t,x') \psi(t)\,\d t + O_{L^\infty}(h), \label{eq:inv1}
\end{align}
using the product of cosines formula and a change of variables to absorb the second term generated by the latter into the error.
To see that the error is $O_{L^\infty}(h)$, notice that 
$E$ differs from an asymptotic solution containing finitely many terms by an element of  $h^{M}\mathcal{B}_{\sigma}^{m}$  for some $\sigma>0$, and the term corresponding to each $k\geq 0$ in \eqref{eq:approx} is in $ h^{k+3/2}L^\infty$.
Since the total error in \eqref{eq:inv1}  includes terms corresponding to $k\geq 1$ only, it is in $hL^\infty+h^{M-3/2}\mathcal{B}_{\sigma}^{m}\subset hL^\infty$ by our choice of $M$. 
Since $ U_{{\rm init},2}(\cdot,x')\not\equiv 0$, the integral in \eqref{eq:inv1}   can be made non-vanishing with a suitable $\psi$ in a neighborhood of every $x'$ with $|x'|\le R$, and eventually we can use a partition of unity. This recovers $\cos( \tau(R,x'))$ uniquely. Similarly, we recover $\sin( \tau(R,x'))$ by multiplying by $\psi(t)\sin(\phi/h)$ instead of $\psi(t)\cos(\phi/h)$. 

This process recovers $\tau(R,x')$ up to $2\pi k$, $k$ integer. Then if we have two such $\tau_j$'s corresponding to two nonlinearity coefficients, we  get $\tau_1(R,x')-\tau_2(R,x')=2\pi k(x')$ on the ``detector'', see Figure~\ref{fig:the_setup}. Since the left-hand size is smooth and compactly supported, we must have $k(x')=0$. By \r{tau}, this recovers the X-ray transform of $\chi^{(3)}$ in the direction $\omega$. As stated in the theorem, varying $\omega$, we recover $\chi^{(3)}$ itself. 
\end{proof}

Note that in the proof above, we projected to $e_2=(0,1,0)$ only. We could have projected the field to $e_3$ as well. Physically, the separation of the high from the low frequencies above corresponds to measuring light vs.\ a constant electric field, and the projection to $e_2$ of $e_3$ corresponds to applying polarizing filters.

\begin{remark}\label{remark_cell}
Assume now that $ U_{\rm init}(x_1,x')=(0,a_2,a_3)=\textrm{const.}$ for $|x'|\le R$, $|x_1|<\delta $ with some $\delta>0$. Then  we can project $E_\textrm{in}(t,R,x')$ to $(0,-a_3,a_2)/\sqrt{a_2^2+ a_3^2}$ as in Section \ref{sec_cell}, and with $\psi$ supported in $(R-\delta,R+\delta)$ we can extract the high-frequency behavior as in \r{eq:inv1}. In this way we justify  the arguments there, where we analyzed the leading high-frequency terms only. 
\end{remark}

\section{Concluding remarks} \label{sec_scale}
We make some remarks on the magnitudes of the physical quantities involved. 
The speed of light in vacuum is  $c_0 \approx 3\times 10^8$ m/sec, the permittivity of free space is $\epsilon_0\approx 8.854\times 10^{-12}$ F/m, and $\chi^{(3)}$ of nitrobenzene is of the order of $\chi^{(3)}\approx 5.7\times 10^{-20}$ $\text{m}^2/\text{V}^2$ (see \cite[Table 4.1.2]{boyd2020nonlinear}). 
 A typical external electric field is of the order of $\tilde{\underline E} \sim 30$ kV/cm, where $\tilde{\underline E}=h^{1/2}\underline{E} $ in our notation. A typical wavelength (for green) is   $\lambda\approx 5\times 10^{-7}$ m. The scaling we chose requires $\chi^{(3)}|\tilde{ \underline{E}_0}|^2\sim h$ with $\lambda=2\pi h$. With those values, we get $\chi^{(3)}|\tilde E_0|^2/\lambda\approx 1$, which is consistent with our scaling.  

This shows that $\chi^{(3)}$ is very small in the metric units and remains small even relative to the wavelength $\lambda$, which is $2\pi h$ in our setup. We treat $\chi^{(3)}$  implicitly as $\sim 1$. On the other hand, the strong electric field $\underline E$ has an amplitude of the order $h^{1/2}$, which is not ``strong.'' We can of course rescale those quantities. We can take $ E^\sharp = h^{1/2-p}\underline{E} + h^{3/2-p}U$. Choosing a suitable $p>1/2$, we get closer to physics, where $\chi^{(3)}$, now replaced by $h^{2p} \chi^{(3)}$ is indeed small, and $h^{1/2}\underline E$, now rescaled to $h^{-p+1/2}\underline E$, is large. The requirement $\chi^{(3)}|E|^2\sim h$ is preserved. 

It may seem non-physical to relate the ratio of the amplitude of the beam to that of the strong electric field, which is a dimensionless variable, to the wavelength $2\pi h$, which has units of length. In fact,  we are interested in propagation over lengths $L=T$ over the time interval $[0,T]$ (since $c=1$ after rescaling $t$), a typical one being $10$ cm. Then $2\pi h$ should be considered as the wavelength relative to  $L$, i.e., as their ratio, which is dimensionless. Finally, the typical beams involved in the DC Kerr effect are not necessarily of small width like laser rays, even though some are. For this reason, we do not treat that width, which in our case would be the size of $\supp U_\textrm{init}$ in directions perpendicular to $\omega$, as a small parameter.

The prefactor $h^{3/2}$ in \eqref{A1} multiplying $U_{\rm in}$ is motivated by our need to extend the asymptotic construction to any order $O(h^N)$. Since the size of $U_\textrm{init}$ in \eqref{IC1} is arbitrary, as long as it is $h$-independent, it can be ``small'' if we want to model even weaker beams. It can be large as well, but then we would have to take $h$ even smaller for similar error bounds. On  the other hand, if we put  $h^{1/2}\epsilon U$ instead of $h^{3/2} U$ in  \eqref{A1}, with $0<\epsilon\ll1$, the leading profile equation would be the same but one would have trouble deriving the lower order terms for an asymptotic solution, and ultimately, proving existence of an exact solution close to it. 
Trying to take $h^{1/2} U_{\rm in}$  there (instead of $h^{3/2}U_{\rm in}$) puts us in the strongly nonlinear regime, which changes the geometry of the wave propagation, and is much harder to justify. In other words, we really want the beam to be weaker than the strong electric field, and it being weaker by the factor $h$ is what makes the construction work. 

We would like to point out possible generalizations which we believe can be treated in the same way. In Theorem~\ref{thm_main}\ref{item:c}, one can have a strong electric field $\underline{E}$ with a  not necessarily constant leading term $h^{1/2}\underline E_0$ as in part (a). 
Also, $\underline{E}$ could be $t$ (and $x$) dependent as long as it does not oscillate at the rate $1/h$. Finally, $\chi^{(1)}>-1$ could be nontrivial and $x$-dependent, as long as the beam does not develop caustics over the time interval of interest, but we expect the treatment of this case to complicate the exposition substantially.  

%
%
%
%
%
%
%
%
%
%
%
%



\begin{thebibliography}{10}

\bibitem{acosta2021nonlinear}
S.~Acosta, G.~Uhlmann, and J.~Zhai.
\newblock Nonlinear ultrasound imaging modeled by a {W}estervelt equation.
\newblock {\em SIAM J. Appl. Math.}, 82(2):408--426, 2022.

\bibitem{al2014determination}
A.~A. Al-Dergazly, A.~A. Shakir, and O.~A. Abbas.
\newblock Determination of olive oil {K}err constant for electro optical
  applications.
\newblock {\em Al-Nahrain Journal for Engineering Sciences}, 17(1):7--14, 2014.

\bibitem{Assylbekov-Ting3}
Y.~M. Assylbekov and T.~Zhou.
\newblock Direct and inverse problems for the nonlinear time-harmonic {M}axwell
  equations in {K}err-type media.
\newblock {\em J. Spectr. Theory}, 11(1):1--38, 2021.

\bibitem{Assylbekov-Ting2}
Y.~M. Assylbekov and T.~Zhou.
\newblock Inverse problems for nonlinear {M}axwell's equations with second
  harmonic generation.
\newblock {\em J. Differential Equations}, 296:148--169, 2021.

\bibitem{BaroniColomboMingione}
P.~Baroni, M.~Colombo, and G.~Mingione.
\newblock Regularity for general functionals with double phase.
\newblock {\em Calc. Var. Partial Differ. Equ.}, 57(2):48, 2018.
\newblock Id/No 62.

\bibitem{Behzadan}
A.~Behzadan and M.~Holst.
\newblock Multiplication in {S}obolev spaces, revisited.
\newblock {\em Ark. Mat.}, 59(2):275--306, 2021.

\bibitem{boyd2020nonlinear}
R.~W. Boyd.
\newblock {\em Nonlinear optics}.
\newblock Academic Press, 4th edition, 2020.

\bibitem{Csato2012}
G.~Csat\'o, B.~Dacorogna, and O.~Kneuss.
\newblock {\em The pullback equation for differential forms}, volume~83 of {\em
  Progress in Nonlinear Differential Equations and their Applications}.
\newblock Birkh\"auser/Springer, New York, 2012.

\bibitem{John_Kerr}
N.~DeFilippis, S.~Moskow, and J.~C. Schotland.
\newblock Born and inverse {B}orn series for scattering problems with {K}err
  nonlinearities.
\newblock {\em Inverse Problems}, 39(12):Paper No. 125015, 20, 2023.

\bibitem{Donnat-Rauch_dispersive}
P.~Donnat and J.~Rauch.
\newblock Dispersive nonlinear geometric optics.
\newblock {\em J. Math. Phys.}, 38(3):1484--1523, 1997.

\bibitem{Dumas_Nonlinear-Geom-Optics}
E.~Dumas.
\newblock About nonlinear geometric optics.
\newblock {\em Bol. Soc. Esp. Mat. Apl. SeMA}, (35):7--41, 2006.

\bibitem{S_N-Westervelt}
N.~Eptaminitakis and P.~Stefanov.
\newblock Weakly nonlinear geometric optics for the {W}estervelt equation and
  recovery of the nonlinearity.
\newblock {\em SIAM J. Math. Anal.}, 56(1):801--819, 2024.

\bibitem{J_C_Filippini_1975}
J.~C. Filippini.
\newblock Recent progress in {K}err cell technology: {P}hysical considerations.
\newblock {\em Journal of Physics D: Applied Physics}, 8(3):201, 2 1975.

\bibitem{Grisvard}
P.~Grisvard.
\newblock {\em Elliptic problems in nonsmooth domains}, volume~69 of {\em
  Classics in Applied Mathematics}.
\newblock Society for Industrial and Applied Mathematics (SIAM), Philadelphia,
  PA, 2011.
\newblock Reprint of the 1985 original [MR0775683], With a foreword by Susanne
  C. Brenner.

\bibitem{Gues93}
O.~Gu{\`e}s.
\newblock D\'{e}veloppement asymptotique de solutions exactes de syst\`{e}mes
  hyperboliques quasilin\'{e}aires.
\newblock {\em Asymptotic Anal.}, 6(3):241--269, 1993.

\bibitem{Hintz-U-19}
P.~Hintz and G.~Uhlmann.
\newblock Reconstruction of {L}orentzian manifolds from boundary light
  observation sets.
\newblock {\em Int. Math. Res. Not. IMRN}, (22):6949--6987, 2019.

\bibitem{Hintz2}
P.~Hintz, G.~Uhlmann, and J.~Zhai.
\newblock {An Inverse Boundary Value Problem for a Semilinear Wave Equation on
  {L}orentzian Manifolds}.
\newblock {\em International Mathematics Research Notices}, 05 2021.

\bibitem{Hintz1}
P.~Hintz, G.~Uhlmann, and J.~Zhai.
\newblock The {D}irichlet-to-{N}eumann map for a semilinear wave equation on
  {L}orentzian manifolds.
\newblock {\em Comm. Partial Differential Equations}, 47(12):2363--2400, 2022.

\bibitem{Jol-Met-Rau-just-Duke}
J.-L. Joly, G.~M\'{e}tivier, and J.~Rauch.
\newblock Generic rigorous asymptotic expansions for weakly nonlinear
  multidimensional oscillatory waves.
\newblock {\em Duke Math. J.}, 70(2):373--404, 1993.

\bibitem{JMR-95}
J.-L. Joly, G.~M\'{e}tivier, and J.~Rauch.
\newblock Coherent and focusing multidimensional nonlinear geometric optics.
\newblock {\em Ann. Sci. \'{E}cole Norm. Sup. (4)}, 28(1):51--113, 1995.

\bibitem{Joly-Rauch_just}
J.-L. Joly and J.~Rauch.
\newblock Justification of multidimensional single phase semilinear geometric
  optics.
\newblock {\em Trans. Amer. Math. Soc.}, 330(2):599--623, 1992.

\bibitem{Kaltenbacher_2021}
B.~Kaltenbacher and W.~Rundell.
\newblock Determining the nonlinearity in an acoustic wave equation.
\newblock {\em Math. Methods Appl. Sci.}, 45(7):3554--3573, 2022.

\bibitem{Kalyakin1989}
L.~A. Kalyakin.
\newblock Long wave asymptotics. {Integrable} equations as asymptotic limits of
  non- linear systems.
\newblock {\em Russ. Math. Surv.}, 44(1):3--42, 1989.

\bibitem{KLU-18}
Y.~Kurylev, M.~Lassas, and G.~Uhlmann.
\newblock Inverse problems for {L}orentzian manifolds and non-linear hyperbolic
  equations.
\newblock {\em Invent. Math.}, 212(3):781--857, 2018.

\bibitem{Lad-Ural}
O.~A. Ladyzhenskaya and N.~N. Ural'tseva.
\newblock {\em Linear and quasilinear elliptic equations}.
\newblock Academic Press, New York-London, 1968.
\newblock Translated from the Russian by Scripta Technica, Inc, Translation
  editor: Leon Ehrenpreis.

\bibitem{lassas2020uniqueness}
M.~Lassas, T.~Liimatainen, L.~Potenciano-Machado, and T.~Tyni.
\newblock Uniqueness and stability of an inverse problem for a semi-linear wave
  equation.
\newblock {\em arXiv preprint arXiv:2006.13193}, 2020.

\bibitem{LUW1}
M.~Lassas, G.~Uhlmann, and Y.~Wang.
\newblock Determination of vacuum space-times from the {E}instein-{M}axwell
  equations.
\newblock {\em arXiv:1703.10704}, 2017.

\bibitem{LassasUW_2016}
M.~Lassas, G.~Uhlmann, and Y.~Wang.
\newblock Inverse problems for semilinear wave equations on {L}orentzian
  manifolds.
\newblock {\em Comm. Math. Phys.}, 360(2):555--609, 2018.

\bibitem{McLean-book}
W.~McLean.
\newblock {\em Strongly elliptic systems and boundary integral equations}.
\newblock Cambridge University Press, Cambridge, 2000.

\bibitem{Metivier-Notes}
G.~M{\'e}tivier.
\newblock The mathematics of nonlinear optics.
\newblock In {\em Handbook of differential equations: evolutionary equations.
  {V}ol. {V}}, Handb. Differ. Equ., pages 169--313. Elsevier/North-Holland,
  Amsterdam, 2009.

\bibitem{Metivier-Joly-Rauch}
G.~M{\'e}tivier, J.-L. Joly, and J.~Rauch.
\newblock Recent results in non-linear geometric optics.
\newblock In {\em Hyperbolic problems: theory, numerics, applications, {V}ol.
  {II} ({Z}\"{u}rich, 1998)}, volume 130 of {\em Internat. Ser. Numer. Math.},
  pages 723--736. Birkh\"{a}user, Basel, 1999.

\bibitem{new2011-non-linear}
G.~New.
\newblock {\em Introduction to nonlinear optics}.
\newblock Cambridge University Press, 2011.

\bibitem{OSSU-principal}
L.~Oksanen, M.~Salo, P.~Stefanov, and G.~Uhlmann.
\newblock Inverse problems for real principal type operators.
\newblock {\em Amer. J. Math.}, 146(1):161--240, 2024.

\bibitem{Rauch-geometric-optics}
J.~Rauch.
\newblock {\em Hyperbolic partial differential equations and geometric optics},
  volume 133 of {\em Graduate Studies in Mathematics}.
\newblock American Mathematical Society, Providence, RI, 2012.

\bibitem{S-Antonio-nonlinear}
A.~S\'{a}~Barreto and P.~Stefanov.
\newblock Recovery of a cubic non-linearity in the wave wquation in the weakly
  non-linear regime.
\newblock {\em Comm. Math. Phys.}, 392(1):25--53, 2022.

\bibitem{S-Antonio-nonlinear2}
A.~S{\'a}~Barreto and P.~Stefanov.
\newblock Recovery of a general nonlinearity in the semilinear wave equation.
\newblock {\em Asymptotic Anal.}, 138(1-2):27--68, 2024.

\bibitem{uhlmann-zhang-2021inverse}
G.~Uhlmann and Y.~Zhang.
\newblock Inverse boundary value problems for wave equations with quadratic
  nonlinearities.
\newblock {\em J. Differential Equations}, 309:558--607, 2022.

\bibitem{Uhlmann-Zhang-acoustics}
G.~Uhlmann and Y.~Zhang.
\newblock An inverse boundary value problem arising in nonlinear acoustics.
\newblock {\em SIAM J. Math. Anal.}, 55(2):1364--1404, 2023.

\bibitem{Yoshikawa1993}
A.~Yoshikawa.
\newblock Solutions containing a large parameter of a quasi-linear hyperbolic
  system of equations and their nonlinear geometric optics approximation.
\newblock {\em Trans. Am. Math. Soc.}, 340(1):103--126, 1993.

\end{thebibliography}
%

\end{document}